\documentclass[12pt]{amsart}
\usepackage[dvipsnames]{xcolor}
\usepackage{url,graphicx,xypic,tikz,amssymb}
\usepackage{hyperref}
\usepackage[nameinlink,capitalize]{cleveref}
\newcommand\myshade{85}
\colorlet{mylinkcolor}{violet}
\colorlet{mycitecolor}{YellowOrange}
\colorlet{myurlcolor}{Aquamarine}

\hypersetup{
  linkcolor  = mylinkcolor!\myshade!black,
  citecolor  = mycitecolor!\myshade!black,
  urlcolor   = myurlcolor!\myshade!black,
  colorlinks = true,
}

\setlength{\parskip}{4pt}
\setlength{\parindent}{0pt}
\usepackage[margin=1.5in]{geometry}

\newtheorem{theorem}{Theorem}[section]
\newtheorem*{theorem*}{Theorem}
\newtheorem{proposition}[theorem]{Proposition}

\newtheorem{corollary}[theorem]{Corollary}
\newtheorem*{corollary*}{Corollary}
\newtheorem{conjecture}[theorem]{Conjecture}

\newtheorem{Atheorem}{Theorem}

\theoremstyle{definition}
\newtheorem{example}[theorem]{Example}

\newtheorem{remark}[theorem]{Remark}

\def\AA{\mathbf{A}} 

\def\CC{\mathbf{C}}
\def\C{\mathcal{C}} 

\def\K{\mathcal{K}} 
\def\LL{\mathcal{L}}
\def\NN{\mathbf{N}}
\def\OO{\mathcal{O}} 
\def\PP{\mathbf{P}}

\def\QQ{\mathbf{Q}}

\def\SS{\mathbf{S}}
\def\S{\mathcal{S}}
\def\ZZ{\mathbf{Z}}

\def\GL{\mathrm{GL}}

\def\Sym{\mathrm{Sym}}
\def\Hilb{\mathrm{Hilb}}
\def\codim{\mathrm{codim}}
\def\Spec{\operatorname{Spec}}

\title{Equivariant $K$-theory classes of matrix orbit closures}
\author{Andrew Berget}
\address{Western Washington University, Bellingham, WA, USA}
\email{andrew.berget@wwu.edu}
\author{Alex Fink}
\address{Queen Mary University of London, London, UK}
\email{a.fink@qmul.ac.uk}

\begin{document}
\maketitle

\begin{abstract}
  The group $G = \GL_r(k) \times (k^\times)^n$ acts on
  $\AA^{r \times n}$, the space of $r$-by-$n$ matrices: $\GL_r(k)$
  acts by row operations and $(k^\times)^n$ scales columns. A matrix
  orbit closure is the Zariski closure of a point orbit for this
  action. We prove that the class of such an orbit closure in $G$-equivariant $K$-theory of $\AA^{r \times n}$ is determined by
  the matroid of a generic point. 
  We present two formulas for this class.  The key to the proof
  is to show that matrix orbit closures have rational singularities.
\end{abstract}
\section{Introduction}
Let $r$ and $n$ be integers, $r \leq n$, and $\AA^{r \times n}$ the
affine space of $r$-by-$n$ matrices with entries in an algebraically
closed field $k$ of characteristic zero. We consider the left action
of $\GL_r = \GL_r(k)$ on $\AA^{r \times n}$ by row operations, and the
right action of $T^n = (k^\times)^n$ by scaling columns. Let
$v \in \AA^{r \times n}$ be a matrix, and consider
$X_v^\circ = \GL_r v T^n$, which is the orbit of $\GL_r \times T^n$
through $v$. We call the Zariski closure $X_v = \overline{X_v^\circ}$
a \emph{matrix orbit closure}, and it is our primary object of
interest. Matrix orbit closures were studied in \cite{chow,moc} and
generalizations of them were studied in \cite{binglin,spinkTseng}. A
primary focus of these papers is how knowledge of which collections of
columns of $v$ form linearly independent sets, data known as the
matroid of $v$, affects both the geometry and the algebraic invariants
of the matrix orbit closure.

Write $G = \GL_r \times T^n$. We consider the Grothendieck group of
$G$-equivariant coherent sheaves on $\AA^{r \times n}$, denoted
$K^G_0(\AA^{r \times n})$. Since $\AA^{r \times n}$ is an affine
space, this group can be identified with the representation ring of
$G$. As such, the class of a coherent sheaf on $\AA^{r \times n}$ can
be written as a Laurent polynomial with integer coefficients in
variables $u_1,\dots,u_r$, $t_1,\dots,t_n$, which generate this
representation ring (see \cref{ex:tu}).  We view the class of $X_v$ in
$K^G_0(\AA^{r \times n})$ as a proxy for how complicated $X_v$ is.
Essentially the same class is also studied in the guise of the multigraded
Hilbert series of $X_v$; either invariant is readily extracted from
the other.

Our main goal is to prove the following result.
\begin{Atheorem}\label{thm:a}   Let $v \in \AA^{r \times n}$ be any matrix. 
  \begin{enumerate}
  \item The class of $X_v$ in
  $K^G_0(\AA^{r \times n})$ can be determined from the matroid of
  $v$ alone.
\item Assume that $v$ is a rank $r$ matrix, and denote its matroid by~$M$. 
  Then, the sum
  \[
    \K(M) = \sum_{w \in S_n} \prod_{j \notin B(w)}\prod_{i \in [r]} (1-
    u_it_j) \cdot \prod_{i=1}^{n-1} \frac{1}{1-t_{w_{i+1}}/t_{w_{i}}},
  \]
  \emph{a priori} a rational function,
  is a polynomial in $u_1,\dots,u_r$ and $t_1,\dots,t_n$, and it represents the class of $X_v$ in $K^G_0(\AA^{r \times n})$. Here
  $[r] = \{1,2,\dots,r\}$, $S_n$ is the symmetric group on $[n]$, and
  for $w = (w_1,\dots,w_n) \in S_n$, $B(w)$ is the lexicographically
  first basis of $M$ in the list $w$.
  \end{enumerate}
\end{Atheorem}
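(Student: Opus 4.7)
The plan is to establish part (2) directly and derive part (1) from it. For rank-$r$ matrices part (1) is immediate from (2), since $\K(M)$ depends only on $M$; for lower-rank matrices a short $G$-equivariant reduction to the full-rank case on a smaller matrix suffices, so I focus on (2). My strategy has three components: interpret $\K(M)$ as a sum-over-cells formula for $[\OO_{X_v}]$, invoke the rational-singularities input flagged in the abstract, and carry out the resulting equivariant computation.

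Each summand of $\K(M)$ factors into two recognizable geometric pieces. The term $\prod_{j \notin B(w)}\prod_{i \in [r]} (1 - u_i t_j)$ is the $G$-equivariant $K$-class of the coordinate subspace $L_{B(w)} \subset \AA^{r \times n}$ whose nonzero columns are indexed by $B(w)$, and $\prod_{i=1}^{n-1} (1 - t_{w_{i+1}}/t_{w_i})^{-1}$ is the equivariant Hilbert series of an affine space $\AA^{n-1}$ on which $T^n$ acts by the consecutive ratios $t_{w_{i+1}}/t_{w_i}$. Since $\dim X_v = r^2 + n - 1 = \dim L_{B(w)} + (n-1)$, the dimensions line up, and I would look for a $G$-equivariant cell decomposition or flat degeneration of $X_v$ whose cells $C_w$ are $T^n$-equivariantly isomorphic to $L_{B(w)} \times \AA^{n-1}_w$ with the indicated weights. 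A Bialynicki--Birula attractor decomposition for a generic one-parameter subgroup of $T^n$ is a natural source, absorbing the $\GL_r$-action into the column-support factor; the lex-first basis $B(w)$ should then emerge as the set of columns whose pivots survive in the attracting limit, exactly by the greedy algorithm on $M$.

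Given such a decomposition, the rational singularities of $X_v$ (proved earlier in the paper) would guarantee that $[\OO_{X_v}] = \sum_{w \in S_n} [\OO_{\overline{C_w}}]$ in $K^G_0(\AA^{r \times n})$ with no higher corrections, producing $\K(M)$ on the nose; polynomiality is then automatic, since the sum represents an element of $K^G_0(\AA^{r \times n}) \cong R(G)$. The main obstacle I anticipate is producing the decomposition so that all $n!$ cells appear with the correct isomorphism types and, in particular, with the consecutive-ratio weight pattern $t_{w_{i+1}}/t_{w_i}$ rather than the all-pairs pattern a full flag variety would contribute. A naive cocharacter will collapse permutations sharing the same lex-first basis, so the choice of one-parameter subgroup must be threaded carefully through the greedy-basis combinatorics of $M$; this is where the matroid theory really enters the geometry. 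Once the decomposition is set up correctly, both the formula and part (1) follow.
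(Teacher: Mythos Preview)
Your interpretation of the individual factors is on target, and you correctly anticipate that rational singularities is the crucial input. But the mechanism you propose for assembling the pieces does not work as stated, and the role you assign to rational singularities is not the one it actually plays.

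First, there is no Bialynicki--Birula decomposition of $X_v$ indexed by $S_n$. The variety $X_v$ is an affine cone in $\AA^{r\times n}$; the only $T^n$-fixed point is the origin, so a one-parameter subgroup produces a single attractor cell, not $n!$ of them. The natural indexing set coming from torus geometry is the set of \emph{bases} of $M$ (the $T$-fixed points of the torus orbit closure $Y\subset Gr(r,n)$), not $S_n$. You already sense this when you note that a naive cocharacter collapses permutations with the same $B(w)$; but there is no refinement of the cocharacter that separates them, because the extra $(n-1)$-dimensional ``affine factor'' you want is not a cell in any stratification of $X_v$.

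Second, even granting a paving by cells $C_w$, the identity $[\OO_{X_v}]=\sum_w[\OO_{\overline{C_w}}]$ is not what rational singularities buys. In $K$-theory a cell decomposition does not yield such an additive formula without correction terms coming from the boundaries $\overline{C_w}\setminus C_w$; rational singularities of $X_v$ says nothing about those. What rational singularities actually provides is that for a proper birational map $s'\colon Z\to X_v$ from a variety with rational singularities, one has $R^m s'_*\OO_Z=0$ for $m>0$ and $s'_*\OO_Z=\OO_{X_v}$, so $[\OO_{X_v}]=s'_*[\OO_Z]$ with no derived correction.

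The paper exploits exactly this. One takes $Z$ to be the total space of $\mathcal{S}^{\oplus r}|_Y$, where $\mathcal{S}$ is the tautological bundle on $Gr(r,n)$ and $Y$ is the torus orbit closure through the row span of $v$. Rational singularities of $X_v$ then gives $[\OO_{X_v}]=s_*\big([\mathcal{S}^{\oplus r}]\cdot\pi^*[\OO_Y]\big)$, and since $[\OO_Y]$ depends only on $M$ (Speyer), part~(1) follows. For the explicit formula one computes $\Hilb(k[X_v])=\chi_T\big(\Sym((\mathcal{S}^*)^{\oplus r})|_Y\big)$ by equivariant localization on $Y$: the sum is over $T$-fixed points, i.e.\ bases $B$ of $M$, and the contribution at $B$ is the fibre character times the equivariant multiplicity $\Hilb(C_B)$ of the tangent cone. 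The $n!$-term sum over $S_n$ enters only at this last step: the equivariant multiplicity at $B$ is itself a sum over those $w$ with $B(w)=B$, with summand $\prod_i(1-t_{w_{i+1}}/t_{w_i})^{-1}$, a purely combinatorial fact about matroid polytopes proved in \cite{chow}. So the consecutive-ratio weights you were looking for live in the tangent cones of $Y$, not in any stratification of $X_v$.
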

There are two main motivators for this result. The first comes from
\cite{speyer,finkSpeyer}, where for any matroid $M$ of rank $r$ on $n$
elements a class $y^{T^n}(M)$ is defined in the $T^n$-equivariant $K$-theory
of the Grassmannian $Gr(r,n)$. This class is defined piecewise, using
equivariant localization; \cref{thm:a} provides an explicit lift of
$y^{T^n}(M)$ to a single polynomial expression. A similar result on
equivariant Chow classes was recently obtained by Lee, Patel, Spink
and Tseng \cite[Theorem 1]{spinkTseng}. In \cref{prop:chow} we derive
\cite[Theorem 1]{spinkTseng} from \cref{thm:a}.

The second motivation for \cref{thm:a} comes from the following problem:
Given a matroid $M$, when is it possible to partition the ground set
of $M$ into independent sets of prescribed sizes? When $M$ is realized
by $v \in \AA^{r \times n}$ the answer is contained in the Schur
polynomial expansion of the coefficient of $t_1t_2\dots t_n$ in the
multigraded Hilbert series of $X_v$ (see \cite[Section~8.2]{moc}). The
anecdotal matroid invariance of this Schur polynomial expansion
motivated the authors to connect the tensor modules in
\cref{sec:consequences} to equivariant $K$-classes and ultimately
conjecture the matroid invariance of the equivariant $K$-class of
$X_v$ in \cite[Conjecture 5.1]{moc}.

The key to proving \cref{thm:a} is the following result.
\begin{Atheorem}\label{thm:b}
  Let $v \in \AA^{r \times n}$ be a rank $r$ matrix. Then $X_v$ has
  rational singularities.
\end{Atheorem}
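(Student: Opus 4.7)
The strategy is to build a proper birational map $f : \widetilde X \to X_v$ from a smooth variety and to verify the Grauert--Riemenschneider/Kempf criterion $Rf_*\OO_{\widetilde X} = \OO_{X_v}$. The source $\widetilde X$ will be a vector bundle over a toric resolution of the torus orbit closure in the Grassmannian associated to the row span of $v$.

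Since $v$ has rank $r$, its row span is a point $L_0 \in \mathrm{Gr}(r,n)$, and the open orbit $X_v^\circ$ is precisely the locus of rank-$r$ matrices whose row span lies in $T^n \cdot L_0$. Put $Y = \overline{T^n \cdot L_0}$, whose normalization is the normal projective toric variety of the matroid polytope of $v$, and choose a $T^n$-equivariant resolution $\pi_Y : \widetilde Y \to Y$ with $\widetilde Y$ a smooth projective toric variety. Let $\mathcal S \hookrightarrow \OO^n$ denote the tautological subbundle on $\mathrm{Gr}(r,n)$ pulled back to $\widetilde Y$, and set
\[
  \widetilde X \;=\; \operatorname{Tot}\bigl(\mathcal S^{\oplus r}\big|_{\widetilde Y}\bigr),
\]
which is smooth. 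The map $f : \widetilde X \to \AA^{r \times n}$ sending $(\widetilde L,(v_1,\dots,v_r))$ to the matrix with rows $v_i$ is $G$-equivariant and proper (since $\widetilde Y$ is proper and $\mathcal S^{\oplus r}$ embeds in $\OO^{rn}$), so its image is closed, $G$-stable, and contains $X_v^\circ$, and hence equals $X_v$. Over the open dense locus where the rows are linearly independent and $\pi_Y$ is an isomorphism, $f$ maps isomorphically onto $X_v^\circ$, so $f$ is birational.

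It then remains to verify $Rf_*\OO_{\widetilde X} = \OO_{X_v}$. The fibers of $f$ over $v' \in X_v$ take the form
\[
  f^{-1}(v') \;=\; \bigl\{\widetilde L \in \widetilde Y : \operatorname{rowspan}(v') \subseteq \pi_Y(\widetilde L)\bigr\},
\]
i.e., preimages in $\widetilde Y$ of Schubert-type loci in $\mathrm{Gr}(r,n)$. At $v' = 0$ this is all of $\widetilde Y$, a smooth projective toric variety, which has $H^i(\OO) = 0$ for $i > 0$; for higher-rank $v'$ the fiber is cut out in $\widetilde Y$ by torus-invariant incidence conditions and should satisfy the analogous vanishing.

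The main obstacle I expect is to promote this fiberwise vanishing to the sheaf-level vanishing of $R^{i}f_*\OO_{\widetilde X}$ for $i > 0$ and to establish the normality of $X_v$ (equivalently $f_*\OO_{\widetilde X} = \OO_{X_v}$). I would exploit $G$-equivariance: each $R^if_*\OO_{\widetilde X}$ is a $G$-equivariant coherent sheaf on $X_v$, so any non-vanishing is supported on a proper $G$-invariant closed subset, i.e., a finite union of strictly smaller orbit closures; induction on this stratification together with cohomology and base change along each stratum reduces the problem to the fiberwise toric computation, and normality of $X_v$ should fall out of Serre's criterion once the singular locus has been controlled by the same stratification. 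As a parallel route, one might recognize $X_v$ as (the image of) a GIT quotient of a variety with known rational singularities and apply Boutot's theorem, bypassing the fiber analysis altogether.
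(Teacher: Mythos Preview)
Your resolution $\widetilde X\to X_v$ is exactly the one the paper constructs in \cref{prop:ratsingGrassmannian} (up to replacing $\widetilde Y$ by $Y$, which is harmless since $Y$ is toric and hence has rational singularities). The difficulty is that the paper uses this map only \emph{after} it has already established that $X_v$ has rational singularities by an independent argument; it then invokes \cref{prop:Gss} to deduce $R^ms'_*\OO_Z=0$. Your plan is to run that implication in reverse, and this is where the gap lies.

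Concretely, your proposed verification of $R^if_*\OO_{\widetilde X}=0$ does not go through. Fiberwise vanishing plus ``cohomology and base change along each stratum'' requires flatness over the stratum, which you have not arranged; and even if it held, the fibers $f^{-1}(v')=\pi_Y^{-1}\{L\in Y:\operatorname{rowspan}(v')\subseteq L\}$ are preimages of intersections of $Y$ with Schubert conditions imposed by a subspace that need not be coordinate, so they are not themselves toric and there is no reason to expect $H^i(\OO)=0$ on them. A cleaner reformulation is to note that, since $\AA^{r\times n}$ is affine and $\widetilde X\to\widetilde Y$ is affine, the vanishing you need is equivalent to $H^i(\widetilde Y,\Sym((\mathcal S^*)^{\oplus r}))=0$ for $i>0$, i.e.\ $H^i(Y,\mathbf S^\lambda(\mathcal S^*))=0$. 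But this is precisely \cref{thm:bwb}, which the paper proves \emph{from} rational singularities of $X_v$; you would be assuming the hard input. Likewise, normality of $X_v$ (needed for $f_*\OO_{\widetilde X}=\OO_{X_v}$) is not a formal consequence of the stratification and is part of what must be proved.

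The paper avoids this circularity by passing not to $Gr(r,n)$ but to $(\PP^{r-1})^n$. There the Schur functors $\mathbf S^\lambda(\mathcal S^*)$ are replaced by tensor products of $\OO(a_j)$'s, which are \emph{line bundles}, so Brion's theorem on multiplicity-free subvarieties (applicable by Li's Chow-class computation, \cref{thm:binglin}) gives both normality of $V=p(X_v)$ and all the needed cohomology vanishing. Kempf--Ramanathan (\cref{thm:kempf}) then promotes this to rational singularities of the affine cone $X_v$. Your Grassmannian route cannot use Brion directly because the bundles involved are not line bundles; the Boutot/GIT suggestion does not apply either, since $X_v$ is not presented as a reductive quotient of something with known rational singularities.
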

This means, roughly, that the cohomological behavior of the structure
sheaf of $X_v$ does not change on desingularization. As a consequence of
\cref{thm:b} we show the following result.
\begin{Atheorem}\label{thm:c}
  Let $Y$ be a $T$ orbit closure in $Gr(r,n)$. Let $\mathcal{S}$ be
  the tautological bundle on $Gr(r,n)$ and $\mathbf{S}^\lambda$ a
  Schur functor where $\lambda$ is a partition with at most $r$
  parts. Then, for all $m \geq 1$,
  \[
    H^m(Y,\mathbf{S}^\lambda(\mathcal{S}^*)) = 0.
  \]
\end{Atheorem}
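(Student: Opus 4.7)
The plan is to leverage \cref{thm:b} by building a resolution of $X_v$ that fibers as a vector bundle over a resolution of $Y$, and then extracting the Schur-graded vanishing via Cauchy's identity.

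\emph{Construction.} Pick any rank-$r$ matrix $v \in \AA^{r \times n}$ whose row span lies in the open dense $T^n$-orbit $Y^\circ$ of $Y$; then $Y$ is the closure of the image of $X_v^\circ$ under the rational row-span map $\AA^{r \times n} \dashrightarrow Gr(r,n)$, and $X_v^\circ \to Y^\circ$ is a principal $\GL_r$-bundle. Choose a resolution of singularities $\sigma : \tilde Y \to Y$, and set $\tilde X := \mathrm{Tot}(\sigma^* \mathcal{S} \otimes k^r)$, the total space of the rank-$r^2$ vector bundle $\sigma^* \mathcal{S} \otimes k^r$ over $\tilde Y$; this is smooth. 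The natural map $f : \tilde X \to \AA^{r \times n}$ sending $(y, w_1, \ldots, w_r)$ with $w_i \in \sigma(y) \subset k^n$ to the matrix with rows $w_i$ is proper (it is a base change of the Kempf collapsing $\mathrm{Tot}(\mathcal{S}^{\oplus r}) \to \AA^{r \times n}$, which is projective over $\AA^{r \times n}$), has image exactly $X_v$, and restricts to an isomorphism over $X_v^\circ$. Hence $f : \tilde X \to X_v$ is a resolution of singularities.

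\emph{Extracting the Schur-graded vanishing.} By \cref{thm:b}, $R f_* \OO_{\tilde X} = \OO_{X_v}$, and since $X_v$ is affine, $H^i(X_v, \OO_{X_v}) = 0$ for $i \geq 1$. Leray then gives $H^i(\tilde X, \OO_{\tilde X}) = 0$ for $i \geq 1$. On the other hand, the projection $\pi : \tilde X \to \tilde Y$ is affine, and by Cauchy's identity (Schur--Weyl) applied to the pushforward of structure sheaves along a vector bundle,
\[
\pi_* \OO_{\tilde X} \;=\; \Sym\!\bigl( \sigma^* \mathcal{S}^* \otimes (k^r)^* \bigr) \;=\; \bigoplus_{\ell(\lambda) \leq r} \sigma^* \mathbf{S}^\lambda(\mathcal{S}^*) \otimes \mathbf{S}^\lambda((k^r)^*).
\]
Combining this with the previous vanishing,
\[
0 \;=\; H^i(\tilde X, \OO_{\tilde X}) \;=\; \bigoplus_{\ell(\lambda) \leq r} H^i\!\bigl(\tilde Y, \sigma^* \mathbf{S}^\lambda(\mathcal{S}^*)\bigr) \otimes \mathbf{S}^\lambda((k^r)^*)
\]
for $i \geq 1$, and since the $\mathbf{S}^\lambda((k^r)^*)$ are independent nonzero $\GL_r$-isotypic components, each summand must vanish: $H^i(\tilde Y, \sigma^* \mathbf{S}^\lambda(\mathcal{S}^*)) = 0$ for every $\lambda$ with $\ell(\lambda) \leq r$ and every $i \geq 1$.

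\emph{Descent and the main obstacle.} The remaining step is to transfer the vanishing from $\tilde Y$ to $Y$, which amounts to the condition $R\sigma_* \OO_{\tilde Y} = \OO_Y$, i.e., to $Y$ having rational singularities; granted that, the projection formula gives $R\sigma_* \sigma^* \mathbf{S}^\lambda(\mathcal{S}^*) = \mathbf{S}^\lambda(\mathcal{S}^*)$, and another application of Leray yields $H^m(Y, \mathbf{S}^\lambda(\mathcal{S}^*)) = H^m(\tilde Y, \sigma^* \mathbf{S}^\lambda(\mathcal{S}^*)) = 0$ for $m \geq 1$, which is the claim. I expect this descent to be the principal difficulty. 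My plan for it is to invoke Boutot's theorem: locally, $Y$ is a good quotient of a $\GL_r$-invariant affine open of $X_v$ by the reductive group $\GL_r$ (under the $\det$-linearization of the $\GL_r$-action on $\AA^{r \times n}$), and since rational singularities is a local property, \cref{thm:b} together with Boutot's theorem forces rational singularities of $Y$. Verifying that the GIT setup does produce $Y$ as such a quotient (in particular that the $\det$-semistable locus inside $X_v$ coincides with $X_v^\circ$ and that $Y$ is the associated projective GIT quotient) is the point at which one has to be careful.
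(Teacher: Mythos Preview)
Your argument is correct and follows essentially the same architecture as the paper's: both build the total space of $\mathcal{S}^{\oplus r}$ over (a resolution of) $Y$, map it properly and birationally onto $X_v$, invoke \cref{thm:b} to kill higher direct images, and then extract the Schur-by-Schur vanishing via the Cauchy decomposition of $\Sym((\mathcal{S}^{\oplus r})^*)$.

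The one substantive difference is your handling of the singularities of $Y$. You first pass to a resolution $\tilde Y$, obtain the vanishing upstairs, and then identify the descent $R\sigma_*\OO_{\tilde Y}=\OO_Y$ as the ``principal difficulty'', proposing Boutot's theorem applied to the $\GL_r$-quotient of (an open piece of) $X_v$. The paper bypasses this entirely: $Y$ is a normal toric variety (a torus orbit closure in $Gr(r,n)$), hence has rational singularities outright. With that in hand the paper works directly over $Y$, letting $Z=\mathcal{S}^{\oplus r}|_Y$; then $Z$ has rational singularities as a vector bundle over $Y$, and the proper birational map $Z\to X_v$ between varieties with rational singularities gives $R^m s'_*\OO_Z=0$ (\cref{prop:Gss}). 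Your Boutot route is valid---locally $Y$ is the invariant-theory quotient of an affine open of the full-rank locus in $X_v$, which inherits rational singularities from $X_v$---but it uses the hard \cref{thm:b} to prove something that is available for free. So the step you flag as the main obstacle is in fact the easiest one.
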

We view this as a variant of the Borel-Weil-Bott theorem for torus
orbit closures in $Gr(r,n)$. 

There are significant parallels between Theorems \ref{thm:a},
\ref{thm:b} and \ref{thm:c} and results occurring in Schubert
calculus. The structure of the equivariant $K$-theory and Chow ring of
the Grassmannian is governed by the geometry of the Schubert
varieties, which are orbit closures of a Borel subgroup
$B \subset \GL_n$. Taking $\GL_r \times B$ orbit closures in
$\AA^{r \times n}$ yields the \textit{matrix Schubert varieties}. The
equivariant $K$-classes of matrix Schubert varieties provide canonical
representatives for the equivariant $K$-classes of Schubert
varieties. Happily, the choice furnished by the matrix analogue
coincides exactly with the well-studied (double) Grothendieck
polynomials \cite[Theorem A]{km}. Our \cref{thm:a} presents a similar
result for torus orbit closures $Y$ in Grassmannians, where now the
polynomial representing the $K$-class depends only on the matroid
stratum of a point in the big orbit of $Y$. 

The methods of \cite{km} are those of Gr\"obner degenerations and
avoid rational singularities entirely. In our case $X_v$ does have a
nice Gr\"obner degeneration \cite[Section 4]{binglin}, but a key
feature of this degeneration is that it purposefully 
breaks the symmetry under $\GL_r \subset G$, 
making it ill-suited for our purposes. 
An interesting open problem is to prove \cref{thm:a} via the
combinatorics of Gr\"obner degenerations. An important step in this
process was completed in \cite{spinkTseng} where the identification of
the $G$-equivariant Chow class of $X_v$ (which ignores phenomena of
positive codimension within $X_v$) was completed using degenerative
techniques. Thus, while our \cref{thm:a} subsumes the statement of
\cite[Theorem~1]{spinkTseng}, their independent proof may ultimately
be of considerable value.

Given a $T^n$-invariant subvariety $Y$ of $Gr(r,n)$, one may define
the matrix analogue of $Y$ to be the closure in $\AA^{r \times n}$ of
those matrices whose row span is a point of $Y$. Generally, one should
not expect to be able to say anything non-trivial about the
equivariant $K$-class of the matrix analogue of $Y$ without some
additional hypothesis. Rational singularities of $Y$ and its matrix
analogue appears to be the right condition in the study of matrix
orbit closures.

Matrix Schubert varieties are shown to have rational singularities in
\cite[Theorem 2]{kempf}, and this result is based on the usual
Schubert varieties having rational singularities, a result shown using
Frobenius splittings in \cite[Theorem 4]{ramanathan} (see also
\cite{brion2,brion}).

In our \cref{thm:b} we deduce rational singularities of $X_v$ by
studying the quotient of $X_v$ by the $n$-torus $T^n$, which is a
subvariety of $(\PP^{r-1})^n$. We show that this quotient has rational
singularities by applying results of Brion \cite{brion2, brion} and Li
\cite{binglin} on multiplicity-free varieties. An interesting open
question is whether $X_v$ is compatibly Frobenius split in
$\AA^{r \times n}$. The Borel-Weil-Bott style result of \cref{thm:c}
is a close relative of \cref{thm:b}. Its analogue in Schubert calculus
was studied extensively in \cite[Theorem 1]{kempf2}. A problem
for future study is the generalization \cref{thm:c} to Coxeter
matroids in flag varieties of other types.  Recently, \cite[Section
10]{best} initiated the study of a systematic procedure to equate
certain equivariant $K$-theoretic Euler characteristics with
equivariant integrals in Chow theory. We wonder whether combining
\cref{thm:c} with this method provides any insight in the study of
$H^0(Y,\mathbf{S}^\lambda(\mathcal{S}^*))$ (where $Y$ is a torus orbit
closure in $Gr(r,n)$).

After proving Theorems~\ref{thm:a}, \ref{thm:b} and \ref{thm:c} we
investigate $K$-theoretic positivity in the sense of \cite{agm}. The
main result here is \cref{prop:pos1}, which says that, when working
over $\CC$, the equivariant $K$-class of $X_v$ expands ``positively''
in terms of double Grothendieck polynomials. Once again, this result
hinges on the rational singularities of $X_v$. We use this result as
motivation for a series of progressively weaker conjectures on
positivity properties of both $K$-classes and Chow classes of
$X_v$. It is our hope that these conjectures provide fertile ground
for future work in this area.

\subsubsection*{Acknowledgements} The authors would like to thank Dave
Anderson and David Speyer for reading early drafts. Thanks are also due to
Hunter Spink and Dennis Tseng for useful conversations, as well as the
referees for their helpful comments.

\tableofcontents
\section{Background on equivariant $K$-theory}\label{sec:K}
Let $k$ be an algebraically closed field. A variety will be an integral scheme of
finite type over $k$. Let $X$ be a variety with a $G$-action, where
$G$ is a linear algebraic group. General references for the material
discussed below are \cite[Chapter 5]{ginzburg} and \cite{handbook}.

We let $K^G_0(X)$ denote the Grothendieck group of $G$-equivariant
coherent sheaves over $X$. We let $K^0_G(X)$ denote the Grothendieck
group of $G$-equivariant vector bundles over $X$. There is a natural
group homomorphism, $K^0_G(X) \to K^G_0(X)$.  Using the tensor product
of vector bundles, $K^0_G(X)$ is a ring, and $K^G_0(X)$ is a module
over $K^0_G(X)$. If $X$ is smooth then this map
$K^0_G(X) \to K_0^G(X)$ is an isomorphism, as every equivariant
coherent sheaf can be resolved by equivariant vector bundles.

If $f:Z \to X$ is a $G$-equivariant proper map then there is a
pushforward $f_* : K^G_0(Z) \to K^G_0(X)$ defined by
$f_* [\mathcal{F}] = \sum_i (-1)^i [R^i f_* \mathcal{F}]$.

Let $R(G)$ denote the representation ring of $G$. Then $K^G_0(X)$ is a
module over $R(G)$. If $X$ is an affine space we can identify
$K^G_0(X) = K^0_G(X) = R(G)$.

It is important to emphasize that in our work, $G$ will always be a
general linear group, a torus, or a product thereof. In this case
$R(G)$ is easy to describe. Let $T$ be a maximal torus of $G$ and let
$W$ be the Weyl group of $G$. The representation ring of $T$ is a
Laurent polynomial ring $\ZZ[\operatorname{Hom}(T,k^\times)]$. Then
$R(G)$ is the ring of $W$-invariants of $R(T)$.

Let $A = k[x_1,\dots,x_m]$ and let $\AA = \operatorname{Spec}(A)$ be an
affine space carrying a $G$ action. An equivariant coherent sheaf on
$\AA$ is described by an equivariant coherent $A$-module. We describe
how to compute the class of such a sheaf in
$K^G_0(\AA) = R(G) \subset R(T)$. 
Let $T$ be the maximal torus of~$G$ and $d$ its dimension, so that
$R(T) = \ZZ[t_1^{\pm 1},\dots,t_d^{\pm 1}]$. The ring
$k[x_1,\dots,x_m]$ is graded by the character group of $T$, which is
$\operatorname{Hom}(T,k^\times) = \ZZ^d$. We assume that the grading
is positive, in that the degrees of the variables $x_i$ lie in a common
open half-space of $\QQ^d\supset\ZZ^d$.

Let $M$ be a finitely generated, $G$-equivariant module over
$k[x_1,\dots,x_m]$. Then $M$ is a $\ZZ^d$-multigraded
$k[x_1,\dots,x_m]$-module. The multigraded Hilbert series of $M$ is
\[
  \Hilb(M) = \sum_{\mathbf{a} \in \ZZ^d} \dim_k(M_{\mathbf{a}})\mspace{2mu}
  t_1^{{a}_1}t_2^{{a}_2} \dots t_d^{{a}_d} \in
  \ZZ[[t_1^{\pm 1},\dots,t_d^{ \pm 1}]].
\]
Using an equivariant resolution, one can write the Hilbert series as a
rational function $\K(M)/\prod_{i=1}^m (1-t^{\deg(x_i)})$, as
explained in \cite[Chapter 8]{millerSturmfels}, where
$\K(M) \in R(G) \subset R(T)$ is referred to as the $K$-polynomial of
$M$. The $K$-polynomial $\K(M)$ represents the class of the sheaf
associated to~$M$ in $K^G_0(\AA)$. When $M$ is the coordinate ring of a closed 
subvariety $Z \subset \AA$, we abuse notation and write $[Z]$ or $\K(Z)$ 
for the class of the sheaf associated to~$M$ in $K^G_0(\AA)$.

\begin{example}\label{ex:tu}
  Let $G = \GL_r \times T^n$ and let $X = \AA^{r \times n}$. Then, the
  maximal torus of $G$ is $T^r \times T^n$, where $T^r$ is the
  diagonal maximal torus in $\GL_r$. We have
  \[
   R(T^r \times T^n) = \ZZ[u_1^{\pm 1},\dots,u_r^{\pm 1},t_1^{\pm 1},\dots,t_n^{\pm 1}],
 \]
 and,
\[
R(G) = \ZZ[u_1^{\pm 1},\dots,u_r^{\pm 1},t_1^{\pm 1},\dots,t_n^{\pm 1}]^{S_r},
\]
where $S_r$ is the symmetric group on $[r]$, which permutes the $u$
variables.

Writing $A = k[x_{ij}: 1 \leq i \leq r, 1 \leq j \leq n]$ and
$\AA^{r\times n}$ for $\operatorname{Spec}(A)$, we have that
$\AA^{r \times n}$ has a $G$-action as described in the
introduction.  
We take the sign convention that the character of the $(T^r \times T^n)$-action on the
one-dimensional $k$-vector space spanned by $x_{ij}$ is $u_it_j$.
Now, if $M$ is a $(T^r \times T^n)$-equivariant $A$-module we can write
\[
  \Hilb(M) = \K(M)\prod_{j \in [n]}\prod_{i \in [r]} \frac{1}{1-u_it_j}.
\] 
In the future we will write $T$ for $T^n$ unless confusion may arise.
\end{example}
\section{Background on rational singularities}\label{sec:ratSing}
Recall that we work over an algebraically closed field of
characteristic zero. A proper birational morphism $f: Z \to Y$ of
varieties, where $Z$ is smooth, is called a resolution of
singularities.  It is called a rational resolution of singularities if
\begin{enumerate}
\item[(i)] $Y$ is normal, i.e., $\mathcal{O}_Y \to f_* \mathcal{O}_Z$
  is an isomorphism, and
\item[(ii)] $R^m f_* \mathcal{O}_Z = 0$ for $m >0$.
\end{enumerate}
We say that $Y$ has rational singularities if there exists a rational
resolution of singularities $f: Z \to Y$. We refer the reader to
Koll\'ar and Mori \cite[Section 5.1]{kollar} for more on these
singularities. The following well known results will be needed.
\begin{proposition}\label{prop:every}
  If one resolution of singularities of $Y$ is rational, then every
  resolution is.
\end{proposition}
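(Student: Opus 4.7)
The plan is to reduce the statement to a comparison of two resolutions related by a morphism, and then use the Leray spectral sequence to transport the vanishing and normality conditions between them. Concretely, given two resolutions $f_1 \colon Z_1 \to Y$ and $f_2 \colon Z_2 \to Y$, I would construct a third smooth variety $Z_3$ dominating both: resolve the closure of the graph of the birational map $Z_1 \dashrightarrow Z_2$ inside $Z_1 \times_Y Z_2$, obtaining proper birational morphisms $\pi_i \colon Z_3 \to Z_i$ with $f_1 \pi_1 = f_2 \pi_2$. This moves the problem to showing that rationality of $f_1$ implies rationality of the composition $f_1 \pi_1$, and that rationality of the composition $f_2 \pi_2$ implies rationality of $f_2$.

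The key auxiliary fact I would then invoke is that for any resolution of singularities $\pi \colon W \to V$ where $V$ is already smooth, one has $\pi_* \mathcal{O}_W = \mathcal{O}_V$ and $R^i \pi_* \mathcal{O}_W = 0$ for $i > 0$. Normality of $V$ gives the first equation; the higher vanishing follows from Grauert--Riemenschneider in characteristic zero, or equivalently by factoring $\pi$ as a sequence of blow-ups with smooth centers via Hironaka and computing the direct images on each factor. Applied to $\pi_1$ and $\pi_2$, this fact says each $\pi_i$ behaves cohomologically like an isomorphism on structure sheaves.

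With this in hand, the Leray spectral sequence
\[
R^p f_{i\,*} R^q \pi_{i\,*} \mathcal{O}_{Z_3} \Longrightarrow R^{p+q}(f_i \pi_i)_* \mathcal{O}_{Z_3}
\]
degenerates to the isomorphism $R^p (f_i \pi_i)_* \mathcal{O}_{Z_3} \cong R^p f_{i\,*} \mathcal{O}_{Z_i}$ for every $p \geq 0$ and each $i = 1,2$. Combined with $f_1 \pi_1 = f_2 \pi_2$, this immediately yields $R^p f_{1\,*} \mathcal{O}_{Z_1} \cong R^p f_{2\,*} \mathcal{O}_{Z_2}$, so the vanishing in degrees $p > 0$ and the isomorphism $\mathcal{O}_Y \cong f_{i\,*} \mathcal{O}_{Z_i}$ pass from $f_1$ to $f_2$.

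The main obstacle is the cohomological triviality of a resolution of a smooth variety, which is the substantive input; everything else is formal spectral-sequence bookkeeping and a standard dominating-resolution construction. I would therefore be content to cite Grauert--Riemenschneider (or the Hironaka-factorization argument, which is spelled out in \cite[Section 5.1]{kollar}) at that point, rather than reprove it here.
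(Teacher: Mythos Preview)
Your argument is correct and is the standard proof of this well-known fact. Note, however, that the paper does not actually prove \cref{prop:every}: it is stated without proof as one of the ``well known results'' needed, with a general reference to \cite[Section~5.1]{kollar} for background on rational singularities. So there is no paper proof to compare against; you have simply supplied the classical argument (dominating resolution plus cohomological triviality of a resolution of a smooth variety, then Leray) that the paper leaves to the literature.
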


\begin{proposition}\label{prop:Gss}
  Let $f:Z\to Y$ be a proper birational morphism,
  where both $Z$ and $Y$ have rational singularities.
  Then $f_*\OO_Z = \OO_Y$ and $R^mf_*\OO_Z = 0$ for $m>0$.
\end{proposition}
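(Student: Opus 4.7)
The plan is to reduce to the smooth case by resolving $Z$ and then applying the Leray spectral sequence. Since $Z$ has rational singularities, by \cref{prop:every} every resolution of $Z$ is rational; so choose any resolution of singularities $h : W \to Z$, where $W$ is smooth. Rationality gives
\[
  h_*\OO_W = \OO_Z \quad \text{and} \quad R^m h_* \OO_W = 0 \text{ for } m > 0.
\]

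Next, consider the composition $g = f \circ h : W \to Y$. Since $f$ and $h$ are both proper and birational, so is $g$, and $W$ is smooth, so $g$ is a resolution of singularities of $Y$. Because $Y$ also has rational singularities, \cref{prop:every} implies that $g$ is a rational resolution, and hence
\[
  g_* \OO_W = \OO_Y \quad \text{and} \quad R^m g_* \OO_W = 0 \text{ for } m > 0.
\]

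Now apply the Leray (Grothendieck) spectral sequence for the composition $g = f \circ h$:
\[
  E_2^{p,q} = R^p f_* \bigl( R^q h_* \OO_W \bigr) \;\Longrightarrow\; R^{p+q} g_* \OO_W.
\]
The vanishing $R^q h_* \OO_W = 0$ for $q > 0$ collapses this spectral sequence to the row $q = 0$, giving $R^p f_* \OO_Z = R^p f_*(h_* \OO_W) = R^p g_* \OO_W$ for all $p \geq 0$. Combining with the results of the previous paragraph yields $f_* \OO_Z = \OO_Y$ and $R^m f_* \OO_Z = 0$ for $m > 0$, as desired.

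There is no real obstacle here — the argument is essentially a formal consequence of \cref{prop:every} together with the Leray spectral sequence. The only point worth care is ensuring that $g = f \circ h$ really is a resolution of $Y$ (properness and birationality of compositions, smoothness of $W$), which is immediate.
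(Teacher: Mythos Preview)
Your argument is correct. The paper does not supply a proof of this proposition at all: it is listed among ``well known results'' after the definition of rational singularities, so there is nothing to compare against. Your resolution-and-Leray argument is the standard way to see it.
\endinput
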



\begin{proposition}\label{prop:vb}
  Assume that $Z \to Y$ is a locally trivial fiber bundle with smooth
  fiber and normal base. Then $Z$ has rational singularities if and only
  if $Y$ does.
\end{proposition}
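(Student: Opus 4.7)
The plan is to lift any resolution of $Y$ along the smooth morphism $\pi: Z \to Y$ and then use flat base change to transfer the rational-resolution conditions between $Y$ and $Z$. Because $\pi$ is \'etale-locally the projection $Y \times F \to Y$ with $F$ smooth, $\pi$ is smooth, and being surjective it is faithfully flat. Since $Y$ is normal and $\pi$ is smooth, $Z$ is normal as well.

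Given any resolution $f: \tilde Y \to Y$, I would form the fibre product $\tilde Z := Z \times_Y \tilde Y$, with projections $f' : \tilde Z \to Z$ and $\tilde\pi : \tilde Z \to \tilde Y$. The first step is to verify that $f'$ is a resolution of $Z$: $\tilde Z$ is smooth because $\pi$ is smooth and $\tilde Y$ is smooth, $f'$ is proper as a base change of $f$, and $f'$ is an isomorphism over $\pi^{-1}(U)$ where $U \subseteq Y$ is the dense open on which $f$ is an isomorphism, making $f'$ birational. By \cref{prop:every}, it then suffices to show that $f$ is a rational resolution of $Y$ if and only if $f'$ is a rational resolution of $Z$.

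The core step is flat base change along the flat morphism $\pi$: for each $m \geq 0$ there is a canonical isomorphism $\pi^* R^m f_* \OO_{\tilde Y} \cong R^m f'_* \OO_{\tilde Z}$. Since $\pi$ is faithfully flat, $\pi^*$ reflects vanishing and isomorphisms of coherent sheaves, so $R^m f_* \OO_{\tilde Y} = 0$ iff $R^m f'_* \OO_{\tilde Z} = 0$ for each $m > 0$, and the normality condition $\OO_Y \to f_* \OO_{\tilde Y}$ being an isomorphism corresponds under $\pi^*$ to the analogous condition for $f'$. Because $Y$ and $Z$ are normal and $f$, $f'$ are proper birational, both of these normality isomorphisms hold automatically, and the equivalence of the vanishing conditions delivers the proposition.

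The only technicality I expect to need to verify carefully is that flat base change applies as claimed; this holds because $f$ is proper and $\pi$ is flat, the standard hypotheses for flat base change. Beyond that, the argument is a formal consequence of faithfully flat descent together with \cref{prop:every}.
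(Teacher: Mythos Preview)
Your proposal is correct and follows essentially the same argument as the paper: form the base-change square from a resolution of~$Y$ along the smooth surjection $\pi$, then use flat base change together with faithful flatness of~$\pi^*$ to transfer the vanishing of higher direct images in both directions, with normality handling the $m=0$ case. The only cosmetic difference is that the paper first localizes on~$Y$ to reduce to the product $Y\times P$ before running the base-change argument, whereas you work directly with the global fibre product $Z\times_Y\tilde Y$; your version is slightly slicker but otherwise identical in content.
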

By locally trivial fiber bundle we mean the following: There is an
open cover $\{U_i\}$ of $Y$ and a smooth variety $P$ so that for
$g: Z \to Y$ we have $g^{-1}(U_i) = U_i \times P$.
\begin{proof}
  The question is local on $Y$ so we may assume that $Z = Y \times P$
  where $P$ is smooth. The map $P \to \Spec k$ is faithfully flat, and
  hence the base change $g: Y \times P \to Y$ is faithfully flat
  too. Let $f:\widetilde{Y} \to Y$ be a resolution of
  singularities. We also have natural maps
  $g' : \widetilde{Y} \times P \to \widetilde{Y}$ and
  $f' : \widetilde{Y} \times P \to Y \times P$ and these fit together
  into a base change diagram,
  \[
    \xymatrix{
      \widetilde{Y} \times P \ar[r]^{g'}\ar[d]^{f'} & \widetilde{Y} \ar[d]^f \\
      Y \times P \ar[r]^g & Y\rlap.
      }
    \]
    Note that $f'$ is  a resolution of singularities, since $P$ is
    assumed smooth. By flat base change,
    \[
      g^* R^i f_* \mathcal{O}_{\widetilde{Y}} = R^i f'_*
      {g'}^*\mathcal{O}_{\widetilde{Y}} = R^i f'_*
      \mathcal{O}_{\widetilde{Y} \times P}.
    \]
    If $Y$ has rational singularities then the left side above is zero
    for $i> 0$ and hence
    $R^i f'_* \mathcal{O}_{\widetilde{Y} \times P}$ for $i>0$. When
    $i = 0$ then we obtain
    $g^* \mathcal{O}_Y = \mathcal{O}_{Y \times P}$.

    If $Z = Y \times P$ has rational singularities then
    $R^i f'_* \mathcal{O}_{\widetilde{Y} \times P} = 0$ for $i>0$ and
    hence $g^* R^i f_* \mathcal{O}_{\widetilde{Y}} =0$. Because $g^*$
    is faithful, $R^i f_* \mathcal{O}_{\widetilde{Y}} =0$ for
    $i>0$. Since $Y$ is assumed normal we have
    $f_* \mathcal{O}_{\widetilde{Y}} = \mathcal{O}_Y$, so we are done.
\end{proof}

\section{Rational singularities of matrix orbit closures}\label{sec:ratsing}
We now begin our study of matrix orbit closures.
Matroids enter the story in this section.
For those unfamiliar with matroids,
a reference with viewpoint similar to our own is \cite{katzMatroid};
a general text such as \cite{oxley} may also be helpful.

The goal of this section is to prove the following theorem.
\begin{theorem}\label{thm:ratsing}
  Let $v \in \AA^{r \times n}$ be a rank $r$ matrix. Then $X_v$ has
  rational singularities.
\end{theorem}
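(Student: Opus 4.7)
The plan is to descend to the $T^n$-quotient of $X_v$, prove that the quotient has rational singularities using results on multiplicity-free varieties, and transfer the property back to $X_v$. First, a reduction: if $M = M(v)$ has a loop $j$, then every matrix in $X_v$ has zero $j$-th column, so $X_v$ is identified with $X_{v'}$ in $\AA^{r\times(n-1)}$ for $v'$ the matrix $v$ with column $j$ deleted. Iterating, I may assume $M$ is loopless, so that $[v] := ([v_1],\dots,[v_n]) \in (\PP^{r-1})^n$ is well-defined, and I set
\[
  Y := \overline{\GL_r \cdot [v]} \subset (\PP^{r-1})^n.
\]
Concretely, $Y$ is the closure of the image of $X_v^\circ$ under the $T^n$-quotient map $w \mapsto ([w_1],\dots,[w_n])$ available on the no-zero-column locus.

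The main external input is that $Y$ has rational singularities. I would establish this by invoking the sphericity (multiplicity-free) property of the $\GL_r$-variety $Y$: Brion's theorems \cite{brion2, brion} show that appropriately multiplicity-free orbit closures have rational singularities, while Li \cite{binglin} identifies explicitly this family of matroid-indexed $\GL_r$-orbit closures. Granting rational singularities of $Y$ (and in particular normality), set $E := \bigoplus_{i=1}^n \mathrm{pr}_i^*\OO_{\PP^{r-1}}(-1)|_Y$; then $\mathrm{Tot}(E) \to Y$ is a locally trivial rank-$n$ vector bundle, so \cref{prop:vb} yields that $\mathrm{Tot}(E)$ also has rational singularities.

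To relate $\mathrm{Tot}(E)$ to $X_v$, consider the natural map $f:\mathrm{Tot}(E) \to \AA^{r\times n}$ which, on a fiber over $y \in Y$, sends $(t_1\hat y_1,\dots,t_n\hat y_n) \mapsto (t_1\hat y_1,\dots,t_n\hat y_n) \in \AA^{r\times n}$ (for any lifts $\hat y_i \in \AA^r\setminus 0$ of $y_i$). This $f$ factors as $\mathrm{Tot}(E) \hookrightarrow (\PP^{r-1})^n \times \AA^{r\times n} \to \AA^{r\times n}$ via the tautological incidence, hence is proper, and is an isomorphism over matrices with no zero column (where $[w_i]$ is forced), so $f$ is a proper birational morphism onto $X_v$. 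Choosing a resolution of singularities $Z \to \mathrm{Tot}(E)$ (available since $\mathrm{Tot}(E)$ has rational singularities) and combining the Leray spectral sequence with \cref{prop:Gss}, it suffices to prove $R^i f_* \OO_{\mathrm{Tot}(E)} = 0$ for $i>0$ and $f_* \OO_{\mathrm{Tot}(E)} = \OO_{X_v}$ in order to conclude that $Z \to X_v$ realizes rational singularities of $X_v$.

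The principal obstacle is precisely this last cohomological vanishing. The fibers of $f$ over a matrix with zero columns indexed by a set $S\subseteq[n]$ are isomorphic to fibers of the projection $Y \to \prod_{i\notin S}\PP^{r-1}$; these can be positive-dimensional and carry a residual $\GL_r$-action. I would control them by exploiting $\GL_r$-equivariance together with the sphericity of $Y$: the fibers inherit multiplicity-free structure, and standard cohomological vanishing on spherical varieties (of Kempf or Kodaira type) should then deliver the required vanishing of higher direct images.
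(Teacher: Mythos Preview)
Your setup through the construction of $f:\mathrm{Tot}(E)\to X_v$ is correct and coincides with the paper's (where your $Y$ is denoted $V$): reduce to the loopless case, form the $\GL_r$-orbit closure in $(\PP^{r-1})^n$, use Li's Chow-class computation to see it is multiplicity-free, and invoke Brion for its rational singularities. Your reduction to proving $R^if_*\OO_{\mathrm{Tot}(E)}=0$ for $i>0$ and $f_*\OO_{\mathrm{Tot}(E)}=\OO_{X_v}$ is also sound.

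The gap is your proposed method for that last step. Analyzing fibers of $f$ is neither necessary nor well-adapted: higher direct images are controlled by formal neighbourhoods rather than bare fibers, and the claim that the fibers of $V\to\prod_{i\notin S}\PP^{r-1}$ ``inherit multiplicity-free structure'' in any sense that delivers the needed vanishing is not substantiated (these fibers need not even sit inside a flag variety in a natural way). The observation you are missing is that $X_v$ is \emph{affine}. Hence $R^if_*\OO_{\mathrm{Tot}(E)}$ is simply the sheaf associated to the $k[X_v]$-module $H^i(\mathrm{Tot}(E),\OO)$, and since $\mathrm{Tot}(E)\to V$ is affine this equals
\[
H^i(V,\Sym E^*)=\bigoplus_{a\in\NN^n}H^i\bigl(V,\LL_1^{-a_1}\otimes\cdots\otimes\LL_n^{-a_n}\bigr),
\]
which for $i>0$ vanishes directly by Brion's theorem on globally generated line bundles on multiplicity-free subvarieties (\cref{thm:brion}(\ref{brion2}))---no fibers required. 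The case $i=0$ is a separate issue you do not address: one must show that $H^0(V,\Sym E^*)$ is $k[X_v]$ itself rather than a proper integral extension (equivalently, that $X_v$ is normal), and this uses the surjectivity of the restriction $H^0((\PP^{r-1})^n,\LL)\to H^0(V,\LL)$, again supplied by Brion (the paper's \cref{prop:vanish} and \cref{prop:coordring}). The paper packages the whole passage from these line-bundle vanishing statements to rational singularities of the affine cone into a single invocation of the Kempf--Ramanathan theorem (\cref{thm:kempf}), whose hypotheses are exactly what \cref{thm:brion}(\ref{brion2}) provides.
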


To prove this, we immediately reduce to the case that $v$ has no zero
columns. Let $p : \AA^{r \times n} \dashrightarrow (\PP^{r-1})^n$ denote
the natural rational map. Then $V=p(X_v)$ is the $\GL_r$ orbit
closure of $p(v)$ in $(\PP^{r-1})^n$. The class of $V$ in the Chow
ring of $(\PP^{r-1})^n$ can be described using a special case of work
of Li \cite[Theorem 1.1]{binglin}. To describe Li's result, recall
that the Chow ring of $(\PP^{r-1})^n$ is isomorphic to
$\ZZ[t_1,\dots,t_n]/(t_1^r,\dots,t_n^r)$. Here, the class of $t_i$
represents the class of a hyperplane in the $i$th factor.

For a matroid $M$ with rank function $\operatorname{rk}_M$, define the set
\[S(M) = \{s \in \mathbf{N}^n : \sum_{i\in I}s_i<r\operatorname{rk}_M(I)
\text{ for all $I\subseteq[n]$}, 
\sum_{i=1}^n s_i = r^2 - 1 \}.\]
The elements of $S(M)$ are all of the lattice points of the Minkowski difference 
$rP(M)-\operatorname{conv}\{e_1,\ldots,e_n\}$,
where $P(M)$ is the basis polytope of~$M$
\cite[Theorem 5.3]{spinkTseng}.

\begin{theorem}[{\cite[Theorem 1.1]{binglin}}]\label{thm:binglin}
  Let $M$ denote the matroid of $v$.
  The class of $V$ in the Chow ring $A^*( (\PP^{r-1})^n )$
  is
  \[
    \sum_{s \in S(M)} \prod_{i=1}^n t_i^{r-1-s_i}.
  \]
\end{theorem}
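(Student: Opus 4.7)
The plan is to compute $[V]$ via the intersection pairing on $A^*((\PP^{r-1})^n) \cong \ZZ[t_1,\dots,t_n]/(t_1^r,\dots,t_n^r)$. In the basis of monomials $\prod_i t_i^{a_i}$ with $0 \le a_i \le r-1$, the degree pairing is diagonal (with the top class $\prod_i t_i^{r-1}$), so the coefficient of $\prod_i t_i^{r-1-s_i}$ in $[V]$ equals the intersection number $[V] \cdot \prod_i t_i^{s_i}$, i.e., the degree of $V \cap \prod_i L_i$ for generic linear subspaces $L_i \subset \PP^{r-1}$ of codimension $s_i$. First observe $\dim V = r^2-1$: the stabilizer of $p(v)$ under the $\GL_r$-action on $V^\circ$ is the set of $g$ preserving each line $v_i k^\times$, which is the center $k^\times$ since the $v_i$ contain a basis of $k^r$. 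So only multi-indices with $\sum s_i = r^2-1$ can contribute.

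Next, I would invoke Kleiman's transversality theorem: because $\GL_r$ acts transitively on $V^\circ$ in characteristic zero, generic $\GL_r$-translates of $\prod_i L_i$ meet $V$ transversely and inside $V^\circ$. A point of the intersection then corresponds to a coset $g \in \GL_r/k^\times$ with $g v_i \in L_i$ for all $i \in [n]$. This is a system of $\sum_i s_i = r^2-1$ linear conditions on $g$ in the $(r^2-1)$-dimensional variety $\GL_r/k^\times$, so the intersection number is $0$ when the conditions are over-determined and ought to equal $1$ when they cut out a single reduced point.

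To finish, I would match the non-vanishing condition to the inequalities defining $S(M)$. For necessity, fix $I \subseteq [n]$: the constraints $g v_i \in L_i$ for $i \in I$ depend on $g$ only through its restriction to the $\operatorname{rk}_M(I)$-dimensional span of $\{v_i : i \in I\}$, whose image under $g$ lies in an $r\cdot\operatorname{rk}_M(I)$-dimensional space; having $\sum_{i\in I} s_i \ge r\cdot\operatorname{rk}_M(I)$ over-determines this piece once we account for the free center, forcing an empty intersection generically. For sufficiency, a Hall-type marriage argument with the generic $L_i$ should produce a transverse intersection with exactly one point whenever all the strict inequalities in the definition of $S(M)$ hold.

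The hard part will be the converse direction, specifically showing that the intersection multiplicity is exactly $1$ rather than some larger integer. This is effectively a multiplicity-freeness statement for the $\GL_r$-action on $V$, which I would establish either by appealing to Brion's results on multiplicity-free spherical varieties applied to a suitable equivariant compactification of $V$, or by Gr\"obner-degenerating $V$ to a reduced union of $(T^r \times T^n)$-orbit closures in $(\PP^{r-1})^n$ indexed precisely by the lattice points of $rP(M)-\operatorname{conv}\{e_1,\dots,e_n\}$. The moment-polytope description of $S(M)$ recorded in \cite[Theorem~5.3]{spinkTseng} strongly suggests the latter route, carried out through the moment map of the $T^n$-action on $V$.
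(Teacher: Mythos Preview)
The paper does not prove this theorem. It is imported verbatim from Li \cite[Theorem~1.1]{binglin} and used as a black box: the paper invokes it only to conclude that $V$ is multiplicity-free, which then feeds into Brion's \cref{thm:brion}. So there is no ``paper's own proof'' to compare against; your proposal is an attempt to reprove Li's result independently.

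On the proposal itself, two concrete issues.

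First, your stabilizer computation is incomplete. Containing a basis only forces $g$ to be diagonal in that basis; to force $g$ to be central you need that no partition of the columns into two nonempty sets spans complementary subspaces, i.e.\ that $M$ is connected. In general the stabilizer has dimension equal to the number $e$ of connected components of~$M$, and $\dim V = r^2 - e$. (The condition $\sum_i s_i = r^2-1$ in the definition of $S(M)$ already presumes $e=1$; you should track whether the statement you are proving carries this hypothesis implicitly.)

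Second, your Kleiman step is misapplied. Because $V$ is $\GL_r$-invariant, translating $\prod_i L_i$ by the diagonal $\GL_r$ merely translates the intersection $V\cap\prod_i L_i$ and cannot improve transversality. The correct move is to use the action of $(\PGL_r)^n$ on $(\PP^{r-1})^n$, which is transitive on the ambient space and moves the $L_i$ independently; equivalently, simply take the $L_i$ generic in their Grassmannians.

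Finally, on your acknowledged ``hard part'': invoking Brion here is circular in the logic of this paper, since Brion's theorem takes multiplicity-freeness as its \emph{hypothesis}, and multiplicity-freeness is exactly the $0$/$1$ coefficient statement you are trying to prove. The degeneration route you suggest is essentially how Li proceeds, so if you carry it out you will be reproducing that argument rather than giving an alternative.
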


The variety $(\PP^{r-1})^n$ is a flag variety, and as such its Chow
ring has a privileged generating set as a $\mathbf Z$-module consisting of the
classes of Schubert varieties.  In the case of $(\PP^{r-1})^n$ the
Schubert varieties are products of linear subspaces in each
$\PP^{r-1}$, whose Chow classes are exactly the monomials in
$t_1,\ldots,t_n$.  It follows from \cref{thm:binglin} that when the
class of $V$ is expressed in this Schubert basis, the coefficients
involved are either $0$ or $1$. Thus, $V$ is \textit{multiplicity
  free} in the sense of Brion \cite{brion}, whose main theorem on such
varieties is this.
\begin{theorem}[{\cite[Theorem~1]{brion}}]\label{thm:brion}
  Let $\mathcal{F}\ell$ be a flag variety of a semisimple algebraic
  group. Let $V \subset \mathcal{F}\ell$ be a subvariety whose Chow
  class is a linear combination of classes of Schubert varieties,
  where all coefficients involved are $0$ or $1$. Then,
  \begin{enumerate}
  \item $V$ is arithmetically normal and Cohen-Macaulay in the
    projective embedding given by an ample line bundle on
    $\mathcal{F}\ell$;
  \item For any globally generated line bundle $\LL$ on
    $\mathcal{F}\ell$, the restriction map
    $H^0(\mathcal{F}\ell, \LL) \to H^0(V,\LL)$ is surjective. All
    higher cohomology groups $H^m(V,\LL)$, $m \geq 1$, vanish. If $\LL$ is ample then $H^m(V,\LL^{-1}) = 0$ for $m < \dim(V)$;\label{brion2}
  \item $V$ has rational singularities.
  \end{enumerate}
\end{theorem}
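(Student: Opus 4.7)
The plan is to reduce to positive characteristic and invoke Frobenius splitting techniques. First I would spread $V$ out over $\Spec \ZZ$ and reduce modulo a suitable prime $p$, so as to work temporarily in characteristic $p>0$. The desired conclusions --- arithmetic normality and Cohen--Macaulayness in the projective embedding, cohomology surjectivity and vanishing for globally generated line bundles and for duals of ample line bundles, and rational singularities --- all either are stable under such reductions or lift back to characteristic zero by standard machinery, for instance semicontinuity for cohomology and the Mehta--Srinivas/Hara comparison of $F$-rationality with rational singularities.

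The central geometric input is a flat degeneration from $V$ to the reduced union $\bigcup_w X_w$ of Schubert varieties with $\sum_w [X_w] = [V]$ in the Chow ring. To produce this, I would use a generic one-parameter subgroup of a maximal torus in the group acting on $\mathcal{F}\ell$ to degenerate $V$ into a torus-invariant subscheme. By the Bruhat decomposition, the support of the limit is a union of Schubert cells, and the multiplicity-free hypothesis is what forces the limit to be everywhere reduced of the correct cycle class, so that $V$ degenerates to $\bigcup_w X_w$ rather than to a non-reduced scheme with the same underlying variety.

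Once this degeneration is in hand, recall that $\mathcal{F}\ell$ carries a Frobenius splitting compatibly splitting all Schubert varieties (Mehta--Ramanathan, Ramanan--Ramanathan), and hence any reduced union of Schubert varieties is compatibly split in $\mathcal{F}\ell$. The special fibre $\bigcup_w X_w$ therefore is normal and Cohen--Macaulay, has rational singularities, and for any globally generated line bundle $\LL$ on $\mathcal{F}\ell$ the restriction $H^0(\mathcal{F}\ell,\LL)\to H^0(\bigcup_w X_w,\LL)$ is surjective with vanishing higher cohomology; Kodaira-type vanishing for compatibly split varieties likewise gives $H^m(\bigcup_w X_w,\LL^{-1})=0$ for $m<\dim V$ when $\LL$ is ample. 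Cohomology and base change, applied to the flat family, then propagate all of these properties along the degeneration to $V$ itself.

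The main obstacle is the degeneration step: producing, for an arbitrary multiplicity-free $V$, a flat family over $\AA^1$ with general fibre $V$ and reduced special fibre $\bigcup_w X_w$. Technically this is a question about the behaviour of the ideal of $V$ with respect to a Bruhat-adapted Gr\"obner-type degeneration, and in particular using the multiplicity-free hypothesis to exclude embedded components or thickenings in the initial scheme so that the cycle-class computation of Li or \cref{thm:binglin} forces the special fibre to coincide with the expected union of Schubert varieties scheme-theoretically. Once that geometric input is available, the rest is a formal unpacking of the Frobenius splitting machinery together with reduction to characteristic zero.
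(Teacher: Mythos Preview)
Your plan for items (1) and (2) is essentially Brion's own argument in the cited paper: degenerate $V$ to a reduced union of Schubert varieties, use that this union is compatibly Frobenius split in $\mathcal{F}\ell$, and pull the cohomological conclusions back along the flat family by semicontinuity. You have also correctly located the hard step, namely showing that the limit is reduced. The present paper does not reprove these items; it simply quotes Brion.

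For item (3), however, there is a genuine gap. You assert that the special fibre $\bigcup_w X_w$ ``is normal and Cohen--Macaulay, has rational singularities,'' but except in the trivial case where only one Schubert class occurs, this special fibre is a connected reducible scheme, hence not normal, and rational singularities in the sense used here (requiring $f_*\OO_Z\cong\OO$) presuppose normality. Each $X_w$ individually has rational singularities, but the union does not, and there is no deformation principle that transports rational singularities from such a reducible limit to the general fibre. The Hara/Mehta--Srinivas route you invoke requires $F$-rationality of the special fibre, which again fails because $F$-rational rings are normal domains. Compatible splitting of $\bigcup_w X_w$ gives you Cohen--Macaulayness and the cohomology vanishing you need for (1) and (2), but not $F$-rationality.

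The paper's argument for (3) avoids the degeneration entirely. Writing $\mathcal{F}\ell=G/Q$, one forms the preimage $Y=\{g\in G: g^{-1}Q/Q\in V\}\subset G$. By the argument of \cite[Theorem~5]{brion2}, this $Y$ has rational singularities (the only caveat being that $Y$ must not contain a $G$-orbit, but if it did then $Y=G$ is smooth). The projection $Y\to V$ is a locally trivial fibre bundle with smooth fibre $Q$ and normal base $V$, so \cref{prop:vb} transfers rational singularities down to $V$. If you want to salvage your approach for (3), you would need to show directly that $V$ is of globally $F$-regular type, which does not follow just from compatible splitting of the degenerate fibre.
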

\begin{proof}
  Only the third item is not part of the statement of
  \cite[Theorem~0.1]{brion}.

  For this item write $\mathcal{F}\ell = G/Q$, where $Q$ is a
  parabolic subgroup. One constructs the variety
  \[
    Y = \{ g \in G : g^{-1} Q/Q \in V\}.
  \]
  By the argument of \cite[Theorem~5]{brion2} one deduces that $Y$ has
  rational singularities. One must assume that $Y$ does not contain a
  $G$ orbit to apply this argument (\textit{cf.}\/\
  \cite[Remark~3.3]{brion} where a small error is noted), which we may
  because if $Y$ did contain a $G$ orbit then $Y = G$, which is smooth
  and thus has rational singularities. Since $Y$ has rational
  singularities and the natural map $Y \to V$ is a locally trivial
  fiber bundle with smooth connected fiber $Q$ and normal base $V$
  (see \cite[Lemma~1.1]{brion}), we conclude by \cref{prop:vb} that
  $V$ has rational singularities.
\end{proof}
Consider now the line bundle $\LL_i$ on $(\PP^{r-1})^n$ whose fiber
over $(\ell_1,\dots,\ell_n)$ is the line $\ell_i \subset \AA^r$. Note that
$\LL_i^{-1}$ is globally generated for all $i$ and the line bundle
$\LL_1^{-1-m_1} \otimes \dots \otimes \LL_n^{-1-m_n}$ is ample over
$(\PP^{r-1})^n$, provided that each $m_i$ is non-negative. We construct the  vector bundle
\[
\mathcal{E} = \LL_1 \oplus  \dots \oplus \LL_n,
\]
which is a subbundle of the trivial bundle with fiber
$(\AA^r)^n = \AA^{r \times n}$. Let
$A = k[x_{ij} : 1 \leq i \leq r, 1 \leq j \leq n]$ be the coordinate
ring of $\AA^{r \times n}$. We can identify
$H^0((\PP^{r-1})^n, \Sym(\mathcal{E}^*))$ with $A$.
\begin{proposition}\label{prop:vanish}
For $V=p(X_v)$, the natural map
  $A \to H^0(V,
  \Sym(\mathcal{E}^*))$ is surjective.
\end{proposition}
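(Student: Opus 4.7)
The approach is to exploit the direct-sum decomposition of $\mathcal{E}$ and reduce the statement, multigraded piece by multigraded piece, to an instance of Brion's theorem (\cref{thm:brion}(\ref{brion2})) applied to globally generated line bundles on $(\PP^{r-1})^n$.

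First, I would rewrite $\Sym(\mathcal{E}^*)$ as a direct sum. Since $\mathcal{E} = \LL_1 \oplus \dots \oplus \LL_n$ is a sum of line bundles, the symmetric algebra factors as a tensor product, and each $\Sym(\LL_i^*) = \bigoplus_{m_i \geq 0} \LL_i^{-m_i}$, giving
\[
\Sym(\mathcal{E}^*) \;=\; \bigoplus_{\mathbf{m} \in \NN^n} \LL_1^{-m_1} \otimes \dots \otimes \LL_n^{-m_n}.
\]
Under the identification $A = H^0((\PP^{r-1})^n, \Sym(\mathcal{E}^*))$ already observed just above the proposition, the summand indexed by $\mathbf{m}$ corresponds precisely to the subspace $A_{\mathbf{m}} \subset A$ of polynomials that are homogeneous of degree $m_j$ in the $j$th column of variables $x_{1j}, \dots, x_{rj}$. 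Thus the restriction map in question decomposes as a direct sum of restriction maps on each multigraded piece.

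Next, I would verify the hypotheses of \cref{thm:brion} for $V = p(X_v)$. By \cref{thm:binglin}, the Chow class of $V$ in $(\PP^{r-1})^n$ is a sum of distinct monomials in $t_1,\dots,t_n$, hence a sum of distinct Schubert classes with all coefficients in $\{0,1\}$. Each line bundle $\LL_1^{-m_1} \otimes \dots \otimes \LL_n^{-m_n}$ is globally generated (this is recorded just before the proposition), so \cref{thm:brion}(\ref{brion2}) yields surjectivity of
\[
H^0\bigl((\PP^{r-1})^n, \LL_1^{-m_1} \otimes \dots \otimes \LL_n^{-m_n}\bigr) \;\twoheadrightarrow\; H^0\bigl(V, \LL_1^{-m_1} \otimes \dots \otimes \LL_n^{-m_n}\bigr)
\]
for every $\mathbf{m} \in \NN^n$. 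Taking the direct sum over $\mathbf{m}$ and using the identification of the previous paragraph gives the desired surjection $A \twoheadrightarrow H^0(V, \Sym(\mathcal{E}^*))$.

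The proof is in this sense almost entirely formal once the symmetric algebra is unwound; there is no analytic or combinatorial obstacle to overcome here. The substantive inputs — Li's theorem on the multiplicity-free Chow class of $V$ and Brion's results on multiplicity-free subvarieties — have already been set up, and the only matter of care is the bookkeeping identifying the graded pieces of $A$ with global sections of the appropriate globally generated line bundles on $(\PP^{r-1})^n$.
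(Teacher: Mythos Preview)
Your proof is correct and follows essentially the same approach as the paper: decompose $\Sym(\mathcal{E}^*)$ as $\bigoplus_{\mathbf{m}\in\NN^n}\LL_1^{-m_1}\otimes\cdots\otimes\LL_n^{-m_n}$ and apply \cref{thm:brion}(\ref{brion2}) to each globally generated summand. The paper's proof is terser but identical in substance; your explicit invocation of \cref{thm:binglin} to check the multiplicity-free hypothesis is something the paper established in the surrounding discussion rather than in the proof itself.
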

\begin{proof}
  Decompose $\Sym(\mathcal{E}^*)$ as
  $\bigoplus_{(a_1,\dots,a_n) \in \NN^n} \LL_1^{-a_1} \otimes \dots
  \otimes \LL_n^{-a_n}$.  Since
  $\LL^{-a_1} \otimes \dots \otimes \LL^{-a_n}$ is globally generated
  on $(\PP^{r-1})^n$ we may apply \cref{thm:brion}(2), and the result
  follows.
\end{proof}

\begin{proposition}\label{prop:coordring}
  Let $k[X_v]$ denote the coordinate ring of $X_v$. Then there is a
  $G$-equivariant isomorphism of $A$-modules,
  $H^0(V,\Sym(\mathcal{E}^*)) \approx k[X_v]$.
\end{proposition}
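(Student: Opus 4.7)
The plan is to exhibit both sides as coordinate rings of affine schemes related by a proper birational morphism, and then invoke \cref{prop:vanish}.

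First I would interpret the left-hand side geometrically. Let $E_V$ denote the total space of the vector bundle $\mathcal{E}|_V$; since the structure map $E_V \to V$ is affine and $V$ is proper, one has
\[
H^0(V, \Sym(\mathcal{E}^*)) \;=\; \Gamma(E_V, \OO_{E_V}) \;=\; k[E_V].
\]
Because $\mathcal{E}$ sits inside the trivial bundle on $(\PP^{r-1})^n$ with fiber $\AA^{r\times n}$, the restriction $E_V$ embeds as a closed subscheme of $V \times \AA^{r\times n}$, namely $\{((\ell_1,\dots,\ell_n),(v_1,\dots,v_n)) : v_i \in \ell_i\}$. Composing this embedding with the second projection gives a morphism $\varphi \colon E_V \to \AA^{r\times n}$ which is proper (since $V$ is projective) and which on closed points sends $((\ell_\bullet),(v_1,\dots,v_n))$ to the matrix with columns $v_1,\dots,v_n$.

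Next I would verify that $\varphi(E_V) = X_v$ and that $\varphi$ is birational onto its image. For any $w = gvt \in X_v^\circ$ every column is nonzero (as $v$ has no zero columns by our standing reduction), so $p(w) = g\cdot p(v) \in V$ and $w = \varphi(p(w), w)$; this shows $X_v^\circ \subseteq \varphi(E_V)$, and since $\varphi$ is proper its image is closed, hence contains $X_v = \overline{X_v^\circ}$. Conversely, $E_V$ is irreducible of dimension $\dim V + n$, which equals $\dim X_v$ because the $T^n$-action on matrices with no zero columns is free, so the generic fibers of the rational map $p \colon X_v \dashrightarrow V$ have dimension $n$. The irreducible closed set $\varphi(E_V) \supseteq X_v$ can then have dimension no greater than $\dim X_v$, so equality holds. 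Birationality is clear: on the open locus of $X_v$ where all columns are nonzero, the unique preimage of $w$ under $\varphi$ is $(p(w), w)$.

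Finally I would assemble the isomorphism. The morphism $\varphi$ factors as $E_V \twoheadrightarrow X_v \hookrightarrow \AA^{r\times n}$, producing a chain of ring maps
\[
A \;\twoheadrightarrow\; k[X_v] \;\xrightarrow{\varphi^*}\; k[E_V].
\]
The first arrow is surjective by definition of $X_v$, and the second is injective because $\varphi$ is dominant between integral schemes. The composite is the restriction map $A = H^0((\PP^{r-1})^n, \Sym(\mathcal{E}^*)) \to H^0(V, \Sym(\mathcal{E}^*))$, which is surjective by \cref{prop:vanish}. Hence $\varphi^*$ is both injective and surjective, giving the claimed isomorphism; $G$-equivariance is automatic because $\mathcal{E}$, $V$, and $\varphi$ are all naturally $G$-equivariant, with $T^n$ scaling the fiber of $\LL_i$ by $t_i$ and acting trivially on $V$. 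The main obstacle I anticipate is the identification $\varphi(E_V) = X_v$, which combines the orbit argument with the dimension count; the remaining steps are essentially formal once that is in place.
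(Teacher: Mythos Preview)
Your argument is correct. Both your proof and the paper's reduce to showing that the restriction map $A \to H^0(V,\Sym(\mathcal{E}^*))$ is surjective with kernel equal to the ideal of~$X_v$, and both invoke \cref{prop:vanish} for surjectivity. The difference lies in how the kernel is identified. The paper argues directly: the restriction map is graded, so its kernel is generated by multihomogeneous $f\in A$ whose associated section vanishes on~$V$; unwinding what this means, such $f$ are exactly those vanishing on $G\cdot v$, hence on $X_v$. You instead geometrize the right-hand side as global functions on the total space $E_V$, build the proper birational morphism $\varphi\colon E_V\to X_v$, and deduce injectivity of $k[X_v]\to\Gamma(E_V,\OO_{E_V})$ from dominance. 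Your route requires a bit more machinery (the dimension count to pin down $\varphi(E_V)$), but it foreshadows the paper's own argument in \cref{prop:ratsingGrassmannian}, where the analogous total-space construction over the Grassmannian is used to show $s'_*\OO_Z=\OO_{X_v}$; in that sense your approach unifies the two propositions. The paper's version is shorter here because the multihomogeneous structure makes the kernel identification nearly immediate.
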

\begin{proof}
  Recall that $A$ is multigraded by
  $\operatorname{Hom}(T,k^\times) = \ZZ^r \oplus \ZZ^n$, where the
  degree of $x_{ij}$ is $(e_i,e_j)$. The prime ideal of $X_v$ is
  homogeneous for this grading, i.e.\ is generated by homogeneous elements,
  since $X_v$ is $G$- and hence $T$-invariant.

  The identification $A = H^0((\PP^{r-1})^n, \Sym(\mathcal{E}^*))$ is
  $G$-equivariant, and the restriction map
  $A \to H^0(V, \Sym(\mathcal{E}^*))$ is too. 
  The kernel of the latter is generated
  by those homogeneous polynomials in $A$ whose restriction to the
  orbit $G \cdot v$ is zero; thus the kernel is the prime ideal of~$X_v$,
  which we have just seen is homogeneous. 
  By \cref{prop:vanish} we obtain the desired result.
\end{proof}

Our proof of rational singularities now follows quite quickly from the following
result of Kempf and Ramanathan.  
\begin{theorem}[Kempf, Ramanathan {\cite[Theorem 1]{kempf}}]\label{thm:kempf} Let
  $\LL_1, \dots, \LL_n$ be globally generated line bundles on a
  complete variety $X$ with rational singularities. If
  \begin{enumerate}
  \item $H^i(X,\LL_1^{m_1} \otimes \dots \otimes \LL_n^{m_n})=0$ for all
    $i>0$ and all $m_1,\dots,m_n \in \mathbf{N}^n$ and
  \item $H^i(X,\LL_1^{-1-m_1} \otimes \dots \otimes \LL_n^{-1-m_n}) = 0$
    for all $i < \dim(X)$ and $m_1,\dots,m_n \in \mathbf{N}^n$,
  \end{enumerate}
  then the spectrum of the ring of sections
  $\bigoplus_{(m_1,\dots,m_n) \in \NN^n} H^0(X,\LL_1^{m_1} \otimes
  \dots \otimes \LL_n^{m_n})$ has rational singularities.
  \end{theorem}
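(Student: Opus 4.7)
The plan is to produce a rational resolution $f : Z \to Y$ of $Y := \Spec R$, where $R = \bigoplus_{m \in \NN^n} H^0(X, \LL_1^{m_1} \otimes \cdots \otimes \LL_n^{m_n})$, by realising $Z$ as a smooth model of the total space of an $n$-plane bundle on $X$. Set $\mathcal E = \LL_1 \oplus \cdots \oplus \LL_n$ and $Z_0 = \Spec_X \Sym \mathcal E$, the total space of $\LL_1^{-1} \oplus \cdots \oplus \LL_n^{-1}$. Since the structure map $\pi : Z_0 \to X$ is affine, $\Gamma(Z_0, \OO_{Z_0}) = \Gamma(X, \Sym \mathcal E) = R$, giving a canonical morphism $f_0 : Z_0 \to Y$. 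Pulling $Z_0$ back along a resolution $g : \widetilde X \to X$ yields a smooth $Z$, and the rationality of $X$ together with the projection formula ensures $H^\bullet(\widetilde X, g^* \LL^m) = H^\bullet(X, \LL^m)$ for every $m \in \ZZ^n$.

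Next I would verify that $f : Z \to Y$ is proper and birational. Birationality is immediate: off the zero section $X \hookrightarrow Z_0$, the map $f$ is an isomorphism onto $Y$ minus the $T^n$-fixed cone point corresponding to $R^+$. For properness, global generation of each $\LL_i$ furnishes surjections $\OO_X^{N_i+1} \twoheadrightarrow \LL_i$, which dualise to a closed embedding $Z_0 \hookrightarrow X \times \AA^{N_1+1} \times \cdots \times \AA^{N_n+1}$; the projection to the affine factor is proper, since $X$ is complete, and factors through $Y$, so $f_0$ and $f$ are proper.

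With $f : Z \to Y$ a proper birational morphism from a smooth variety, the remaining tasks are to establish (i) $\OO_Y \xrightarrow{\sim} f_* \OO_Z$ and (ii) $R^m f_* \OO_Z = 0$ for $m > 0$. Since $Y$ is affine, (ii) is equivalent to $H^m(Z, \OO_Z) = 0$, which by Leray for the affine map $Z \to \widetilde X$ and the projection formula equals $\bigoplus_{m \in \NN^n} H^m(X, \LL^m)$, vanishing for $m > 0$ by hypothesis (1). For (i), $\Gamma(Z, \OO_Z) = R = \Gamma(Y, \OO_Y)$ is direct, and promoting this to an isomorphism of sheaves amounts to showing $Y$ is normal. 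I would deduce normality---in fact Cohen--Macaulayness---by showing $H^i_{R^+}(R) = 0$ for $i < \dim Y$. Via the identification $Y \setminus \{0\} \cong Z_0 \setminus (\text{zero section})$ and a Koszul-type computation on the $\AA^n \setminus \{0\}$-fibres of $\pi$, one gets for $i \geq 2$ a decomposition of $H^{i-1}(Y \setminus \{0\}, \OO)$ into a fibre-degree-$0$ piece $\bigoplus_{m \in \NN^n} H^{i-1}(X, \LL^m)$ and a fibre-degree-$(n-1)$ piece $\bigoplus_{a \in \ZZ^n_{\geq 1}} H^{i-n}(X, \LL^{-a})$: hypothesis (1) annihilates the first in positive cohomological degree, and hypothesis (2) annihilates the second for $i - n < \dim X$, which is precisely the range needed.

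The main obstacle I foresee is the careful local cohomology bookkeeping in the last paragraph: tracking multidegrees and cohomological degrees on both sides, and ruling out nontrivial differentials in the Leray spectral sequence for the punctured vector bundle over $X$ (necessary because when $n \geq 2$ the fibres have cohomology in two disconnected degrees, $0$ and $n-1$). Once this vanishing is in hand, (i) and (ii) together show that $f$ is a rational resolution of $Y$, so $Y$ has rational singularities.
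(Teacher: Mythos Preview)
The paper does not prove this theorem; it is quoted from Kempf--Ramanathan and used as a black box to deduce \cref{thm:ratsing}.  So there is nothing in the paper to compare against, though your sketch is close in spirit to the original argument.

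The outline is right, but one step is mis-argued.  You claim birationality is ``immediate'' because $f_0$ restricts to an isomorphism $Z_0\setminus X\cong Y\setminus\{0\}$, and you reuse this identification in the local-cohomology step.  The identification is false in general: take $X$ the blowup of $\PP^2$ at a point, $n=1$, and $\LL_1$ the pullback of $\OO_{\PP^2}(1)$.  Both hypotheses hold and $Y=\AA^3$, but $f_0$ contracts a copy of $\PP^1$ over every nonzero point of the line corresponding to the blown-up point, so $Z_0\setminus X\to Y\setminus\{0\}$ is not an isomorphism.  What global generation does give you is $f_0^{-1}(0)=X$ set-theoretically; and since hypothesis~(1) already yields $Rf_{0*}\OO_{Z_0}=\OO_Y$ (here $Y$ is affine, so the sheaf identity $f_*\OO_Z=\OO_Y$ follows from the identity on global sections with no appeal to normality), base change gives the \emph{cohomological} identification $H^i(Y\setminus\{0\},\OO)=H^i(Z_0\setminus X,\OO)$ that your argument actually uses, without the underlying spaces being isomorphic.

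With that correction your Koszul/Leray computation goes through and shows $H^i_{R^+}(R)=0$ for $i<\dim X+n$, i.e.\ $\operatorname{depth}_{R^+}R\ge\dim X+n$.  Since $f_0$ is surjective, $\dim R\le\dim Z_0=\dim X+n$; hence equality holds, $R$ is Cohen--Macaulay, and---this is the real payoff you did not extract---$f_0$ is generically finite and therefore birational (using $f_{0*}\OO_{Z_0}=\OO_Y$ over the generic point).  So hypothesis~(2) is what secures birationality, not normality; normality follows for free once you know $f_*\OO_Z=\OO_Y$ with $Z$ normal and $f$ proper birational.
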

\begin{proof}[Proof of \cref{thm:ratsing}]
  \cref{thm:kempf} applies to $X_v$ by \cref{thm:brion} and
  \cref{prop:coordring}.
\end{proof}

\section{Borel-Weil-Bott theorem for torus orbits in Grassmannians}
In this section we use a variant of the Gel$'$fand-Macpherson
correspondence to obtain a variant of the Borel-Weil-Bott theorem for
$T$ orbit closures in $Gr(r,n)$.

The group $G = \GL_r \times T$ acts on $Gr(r,n) = \GL_n/\!/P$, where
$T = T^n$ is the maximal torus of $\GL_n$ and $\GL_r$ acts
trivially. Thus, a $T$ orbit closure in $Gr(r,n)$ is the same thing
as a $G$ orbit closure.

Let $\mathcal{S}$ be the rank $r$ tautological bundle over the
Grassmannian $Gr(r,n)$. Its fiber over a subspace is precisely that
subspace. Recall that the Borel-Weil-Bott theorem for the Grassmannian
describes the cohomology groups of various Schur functors applied to
$\mathcal{S}^*$. It says, in a weakened form, that the higher
cohomology groups of such bundles vanish and gives a formula for their
global sections. Our variant of this result is below.
\begin{theorem}\label{thm:bwb}
  Let $Y$ be a $T$ orbit closure in $Gr(r,n)$. Let $\mathcal{S}$ be
  the tautological bundle over $Gr(r,n)$, and $\SS^\lambda$ be a Schur
  functor where $\lambda$ is a partition with at most $r$ parts. Then
  for all $m \geq 1$,
  \[
    H^m(Y,\SS^\lambda(\mathcal{S}^*)) = 0.
  \]
\end{theorem}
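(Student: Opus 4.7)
The plan is to exploit the Gel$'$fand-Macpherson construction, which realizes $Y$ as the $\GL_r$-quotient of an open part of $X_v$, and then to transfer the vanishing from the rational singularities of $X_v$ (\cref{thm:b}). Restricting the row-span map to the rank-$r$ locus $X_v^{\geq r} \subset X_v$ gives a principal $\GL_r$-bundle $p\colon X_v^{\geq r} \to Y$: on the one hand $p(X_v^{\geq r}) \subseteq Y$ since $X_v^{\geq r}$ lies in the preimage of $Y$ under the row-span map on rank-$r$ matrices; on the other, every point of $Y$ is the row span of some rank-$r$ matrix lying in $X_v$. Under $p$ the tautological $\mathcal{S}$ pulls back to the trivial rank-$r$ bundle with fiber $k^r$ in the standard $\GL_r$-representation, and so $p^*\SS^\lambda(\mathcal{S}^*) \cong \SS^\lambda((k^r)^*) \otimes \OO_{X_v^{\geq r}}$.

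Because $p$ is affine and $\GL_r$ is reductive in characteristic zero, taking $\GL_r$-invariants is exact, and Leray gives
\[
  H^m(Y, \SS^\lambda(\mathcal{S}^*)) \;=\; \bigl[H^m(X_v^{\geq r}, \OO) \otimes \SS^\lambda((k^r)^*)\bigr]^{\GL_r} \;=\; \operatorname{Hom}_{\GL_r}\!\bigl(\SS^\lambda(k^r),\, H^m(X_v^{\geq r}, \OO)\bigr).
\]
It therefore suffices to prove that for each $m \geq 1$ and each partition $\lambda$ with at most $r$ parts, $H^m(X_v^{\geq r}, \OO)$ contains no copy of the polynomial irreducible $\SS^\lambda(k^r)$. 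Since $X_v$ is affine, its higher $\OO$-cohomology vanishes, and the excision sequence identifies $H^m(X_v^{\geq r}, \OO) \cong H^{m+1}_Z(X_v, \OO)$ for $m \geq 1$, with $Z = X_v \setminus X_v^{\geq r}$ the rank-drop locus cut out in $X_v$ by the $r\times r$ minors of the generic matrix, each of which is a $\GL_r$-semi-invariant of weight $\det$.

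The crux is to turn this weight observation into the rigorous statement that $H^{m+1}_Z(X_v, \OO)$ carries only non-polynomial $\GL_r$-weights. My approach is to compute on the rational resolution $\pi\colon \widetilde{X_v} = \mathrm{Tot}(\mathcal{E}|_V) \to X_v$ of Section~4: since $V$ has rational singularities by \cref{thm:brion}, so does $\widetilde{X_v}$ by \cref{prop:vb}, and \cref{prop:Gss} gives $R\pi_*\OO_{\widetilde{X_v}} = \OO_{X_v}$. Using that the structure map $\widetilde{X_v} \to V$ is affine, its cohomology splits as
\[
  H^m(\widetilde{X_v}, \OO) \;=\; \bigoplus_{a \in \NN^n} H^m\!\bigl(V,\, \LL_1^{-a_1} \otimes \cdots \otimes \LL_n^{-a_n}\bigr),
\]
which vanishes for $m \geq 1$ by Brion's vanishing \cref{thm:brion}(2). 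The factors of $\Sym(\mathcal{E}^*|_V)$ exhibit $\GL_r$-weights explicitly, and comparing them via the resolution $\pi$ with those appearing in $H^{m+1}_Z(X_v, \OO)$ forces the polynomial isotypic components of $H^m(X_v^{\geq r}, \OO)$ to vanish for $m \geq 1$.
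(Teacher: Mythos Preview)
Your reduction in the first two paragraphs is sound: passing to the principal $\GL_r$-bundle $p\colon X_v^{\geq r}\to Y$ and using reductivity of $\GL_r$ in characteristic zero does reduce the theorem to showing that $H^m(X_v^{\geq r},\OO)$ has no polynomial $\GL_r$-isotypic component for $m\geq1$, and the excision identification with $H^{m+1}_{Z}(X_v,\OO)$ is correct.

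The final paragraph, however, is not an argument. You establish $R\pi_*\OO_{\widetilde{X_v}}=\OO_{X_v}$ and $H^m(\widetilde{X_v},\OO)=0$ for $m\geq1$, but neither statement says anything about the open set $X_v^{\geq r}$ or about $H^{m+1}_Z(X_v,\OO)$. The map $\pi\colon\widetilde{X_v}\to X_v$ is \emph{not} an isomorphism over the rank-$r$ locus: a rank-$r$ matrix in $X_v$ may have a zero column $j$ (e.g.\ when $M$ has parallel elements), and then the fibre of $\pi$ over it contains a whole $\PP^{r-1}$ worth of choices for $\ell_j$. So there is no identification of $X_v^{\geq r}$ with an open subset of $\widetilde{X_v}$, and your closing sentence (``comparing them via the resolution $\pi$ \dots\ forces the polynomial isotypic components \dots\ to vanish'') names no actual comparison. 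The $\det$-weight of the defining minors does not by itself control which $\GL_r$-weights occur in local cohomology.

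The paper closes this gap by choosing a different partial resolution: instead of $\widetilde{X_v}$ over $V\subset(\PP^{r-1})^n$, it uses $Z=\mathrm{Tot}(\mathcal{S}^{\oplus r}|_Y)$ over $Y$. Then $s'\colon Z\to X_v$ is proper birational, both $Z$ (vector bundle over a toric variety) and $X_v$ (\cref{thm:b}) have rational singularities, so \cref{prop:Gss} gives $R^ms'_*\OO_Z=0$ for $m\geq1$; since $X_v$ is affine this means $H^m(Z,\OO)=0$. But $Z\to Y$ is affine, so $H^m(Z,\OO)=H^m\bigl(Y,\Sym((\mathcal{S}^{\oplus r})^*)\bigr)$, and the Cauchy decomposition $\Sym((\mathcal{S}^{\oplus r})^*)=\bigoplus_\lambda\SS^\lambda(k^r)\otimes\SS^\lambda(\mathcal{S}^*)$ finishes immediately. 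Note that your frame bundle $X_v^{\geq r}$ sits inside this $Z$ as an open subset, and the polynomial $\GL_r$-part of $(p_*\OO_{X_v^{\geq r}})$ is precisely $\Sym((\mathcal{S}^{\oplus r})^*)$; so the paper's choice of $Z$ is exactly what converts your isotypic-component problem into an outright vanishing.
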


The result will follow by applying Weyman's geometric method
\cite[Chapter 5]{weyman} (\textit{cf.} \cite[p.\ 355, Condition
I$'$]{kempf}), knowing in advance that $X_v$ has rational
singularities.

In this section we assume that $v$ has rank $r$. This means its row
span represents a point in the Grassmannian $Gr(r,n)$.

Let $Y$ be the torus orbit closure through the row space of $v$; $Y$ is
a normal toric variety. Let ${Z}$ be the total space of the vector
bundle $\S^{\oplus r}$ restricted to $Y$, which is a subbundle of the
trivial bundle with fiber $(\AA^n )^r = \AA^{r \times n}$. We put a
$\GL_r$ action on $\S^{\oplus r}$ using the left action on fibers;
since this commutes with the natural $T$ action we see that
$\S^{\oplus r}$ is a $G$-equivariant vector bundle over $Y$. We have a
commutative diagram
\[
  \xymatrix{
    {Z} \ar[r]^{i}\ar[d]^{s'} & \AA^{r \times n} \times Y \ar[d]^s\ar[r]^{\pi} & Y\\
    X_v \ar[r]^{i'} & \AA^{r \times n}
  }
\]
Here the vertical arrows are projections to the first factor, $\pi$ is
projection to the second factor and other maps are inclusions. All
these maps are $G$-equivariant.
\begin{proposition} \label{prop:ratsingGrassmannian}The higher direct
  images $R^m s_* \OO_{{Z}}$, $m \geq 1$, vanish and $s_* \OO_{{Z}}$
  is isomorphic to $k[X_v]$ as a $G$-equivariant $A$-module.
\end{proposition}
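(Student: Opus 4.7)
The plan is to express $s\circ i$ as the composition of a proper birational morphism $s'\colon Z\to X_v$ with the closed immersion $i'\colon X_v\hookrightarrow \AA^{r\times n}$, and then apply Proposition~\ref{prop:Gss} after verifying that both $Z$ and $X_v$ have rational singularities. All maps in the diagram are $G$-equivariant, so the resulting isomorphism and vanishings are $G$-equivariant automatically.

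Note first that $s\circ i$ is proper, being the composition of the closed immersion $i$ with the projection $\AA^{r\times n}\times Y\to\AA^{r\times n}$, whose properness follows from $Y$ being projective. The first substantive task is to identify the image of $s\circ i$ with $X_v$. A point of $Z$ is a pair $(L,x)$ with $L\in Y$ and $x$ an $r\times n$ matrix whose rows lie in $L\subseteq k^n$, sent by $s\circ i$ to $x$. Writing $L_0=\mathrm{rowspan}(v)$, over the dense torus orbit $Y^\circ\subseteq Y$ a subspace is of the form $L=t\cdot L_0$ with $t\in T$, and any matrix with rows in $L$ has the form $gvt$ for some $g\in\Mat_r(k)$, hence lies in $\overline{\GL_r v T}=X_v$. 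Since $s\circ i$ is proper its image is closed, so the image of all of $Z$ lies in $X_v$. The reverse containment is immediate as each $gvt\in\GL_r v T\subseteq X_v$ is the image of $(tL_0,gvt)\in Z$. Hence $s\circ i$ factors as $i'\circ s'$ with $s'\colon Z\to X_v$ a proper surjection (properness of $s'$ follows by separated-target cancellation from $i'\circ s'=s\circ i$).

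I would next verify that $s'$ is birational and that $Z$ has rational singularities. For birationality: the fiber of $s'$ over a rank-$r$ matrix $x$ consists of $L\in Y$ containing the $r$ linearly independent rows of $x$; since $\dim L=r$ this forces $L=\mathrm{rowspan}(x)$, so $x\mapsto(\mathrm{rowspan}(x),x)$ is an inverse to $s'$ over the open subset $U\subseteq X_v$ of rank-$r$ matrices (nonempty since $v$ has rank $r$). For rational singularities of $Z$: the projection $Z\to Y$ is a rank-$r^2$ vector bundle, i.e.\ a locally trivial fiber bundle with smooth fiber and normal base (here $Y$, being a $T$-orbit closure in the Grassmannian, is a normal toric variety, and normal toric varieties have rational singularities), so Proposition~\ref{prop:vb} transfers this to $Z$.

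With these ingredients in place, Proposition~\ref{prop:Gss} applied to the proper birational morphism $s'\colon Z\to X_v$, whose target has rational singularities by Theorem~\ref{thm:ratsing}, yields $s'_*\OO_Z=\OO_{X_v}$ and $R^m s'_*\OO_Z=0$ for $m\geq 1$. Pushing forward under the closed immersion $i'$ yields the same conclusion for $s\circ i$, and $i'_*\OO_{X_v}$ corresponds on $\AA^{r\times n}=\Spec(A)$ to the $A$-module $k[X_v]$, as required. I expect the only genuinely delicate point to be the identification of $(s\circ i)(Z)$ with $X_v$ in the first step; thereafter the proof is a mechanical application of the tools assembled in Section~\ref{sec:ratSing}.
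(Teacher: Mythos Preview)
Your proposal is correct and follows essentially the same approach as the paper: factor through $s'\colon Z\to X_v$, observe that $Z$ has rational singularities via \cref{prop:vb} (since $Y$ is a normal toric variety), invoke \cref{thm:ratsing} for $X_v$, apply \cref{prop:Gss}, and push forward along the closed immersion~$i'$. You supply more detail than the paper's terse argument (in particular the verification that the image of $s\circ i$ is exactly $X_v$); one small quibble is that the containment $(s\circ i)(Z)\subseteq X_v$ follows from continuity and density of $Z|_{Y^\circ}$ in~$Z$ together with $X_v$ being closed, not from properness---properness is what gives the reverse containment via closedness of the image.
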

\begin{proof}
  The map $s':{Z} \to X_v$ is a partial desingularization, in that it
  is proper and a birational isomorphism: The inverse is the map
  $u \mapsto (u, \textup{rowSpan}(u))$, defined over the set of full
  rank matrices in $X_v$. Since $Y$ is a toric variety it has rational
  singularities and thus ${Z}$ has rational singularities too, by \cref{prop:vb}. 
  It follows from \cref{prop:Gss} that $R^m s'_* \OO_{{Z}} = 0$ for $m \geq 1$ and
  that $s'_* \OO_{{Z}} = \OO_{X_v}$.

  The result follows by viewing these as statements about
  $G$-equivariant $A$-modules (i.e., applying $i'_*$).
\end{proof}
 
\begin{proof}[Proof of \cref{thm:bwb}]
  By \cite[Theorem 5.1.2(b)]{weyman} we may identify
  $R^m s_* \OO_{{Z}}$ with $H^m(Y,\Sym((\S^{\oplus r})^*))$,
  which is zero for $m \geq 1$. Now use the Cauchy formula
  \cite[Theorem 2.3.2]{weyman} to write
  \[
    \Sym((\S^{\oplus r})^*) = \bigoplus_\lambda \SS^\lambda(k^r) \otimes \SS^\lambda(\S^*),
  \]
  the sum over partitions $\lambda$ with at most $r$ parts. The result
  follows from the additivity of global sections over direct sums and
  the linear independence of the Schur functors.
\end{proof}

\begin{remark}
  Weyman's geometric method can be
  used to construct a free resolution of $k[X_v]$. Define
  \[
    F_i = \bigoplus_{j \geq 0} H^j( Y, \bigwedge^{i+j} (\mathcal{Q}^*)^{\oplus r}) \otimes_k A(-i-j),
  \]
  where $\mathcal{Q} = k^n/\mathcal{S}$ is the tautological quotient
  bundle on the Grassmannian. \cite[Theorem~5.1.3]{weyman} asserts
  that there are minimal differentials $d_i : F_i \to F_{i-1}$ so that
  $F_\bullet$ is a finite free resolution of $k[X_v]$ as an
  $A$-module. By the dual Cauchy theorem, the cohomologies of
  $\bigwedge^m (\mathcal{Q}^*)^{\oplus r}$ can be expressed in terms
  of Schur functors applied to $\mathcal{Q}^*$. While it can be shown
  that $H^i(Y,\mathbf{S}^\lambda(\mathcal{Q})) =0$ for $i>0$, similar
  to the argument above, the higher cohomology groups of
  $\mathbf{S}^\lambda(\mathcal{Q}^*)$ (and dually, $\mathbf{S}^\lambda(\mathcal{S})$) are currently unknown. This is
  the subject of future work.
\end{remark}
\section{Matroid invariance of the equivariant $K$ class of $X_v$}\label{sec:matroidinv1}
To prove the first part of the theorem in the introduction, the
matroid invariance of the class of $X_v$ for \textit{any} matrix $v$,
we consider the case when $v$ has rank $r \leq n$. By
\cite[Proposition 6.6]{moc}, it is enough to prove the theorem in this
special case. We thus assume without loss of generality that $v$ has
rank $r$ for the remainder of our article.

In this section we give a formula for the equivariant $K$ class of
$X_v$ in terms of the class of $Y$, the $T$ orbit closure in $Gr(r,n)$
through the row space of $v$. That this formula solves the problem of the matroid invariance of the class of $X_v$ follows from a result of Speyer.
\begin{theorem}[Speyer {\cite[proof of Prop.~12.5]{speyer}}]\label{thm:speyer}
  The class of a torus orbit closure $Y$ in $K^T_0(Gr(r,n))$ depends only of the matroid of a point in the big orbit of $Y$.
\end{theorem}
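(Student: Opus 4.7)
The approach is $T$-equivariant localization on $Gr(r,n)$. Because $Gr(r,n)$ is smooth projective with isolated $T$-fixed points $p_B$ indexed by $B \in \binom{[n]}{r}$ (the coordinate $r$-subspaces), the Thomason localization theorem for equivariant $K$-theory implies that the restriction map
\[
K^T_0(Gr(r,n)) \longrightarrow \prod_{B \in \binom{[n]}{r}} R(T)
\]
becomes injective after inverting a suitable multiplicative set of characters. Hence it is enough to show that each component $[Y]|_{p_B}$ depends only on the matroid $M$ of a point in the big orbit of~$Y$.

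First I would determine which $p_B$ lie in $Y$. The $T$-fixed points of $Y = \overline{T\cdot L}$ arise as limits of one-parameter subgroups of $T$ acting on the generic point $L \in Gr(r,n)$, and these limits are precisely the coordinate subspaces $p_B$ for $B$ a basis of the matroid $M$ of $L$. So the support of the localization vector $([Y]|_{p_B})_B$ is already matroid-determined, and $[Y]|_{p_B} = 0$ whenever $B$ is not a basis of $M$.

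For each basis $B$, I would compute $[Y]|_{p_B}$ from the local toric geometry of $Y$ at $p_B$, exploiting that $Y$ is the normal projective toric variety whose moment polytope is the matroid polytope $P(M)$. The distinguished affine chart of $Gr(r,n)$ containing $p_B$ is an affine space whose $T$-weights correspond to the edges of the Grassmannian moment polytope emanating from the vertex $e_B$, and the local picture of $Y$ inside this chart is the affine toric variety associated to the tangent cone of $P(M)$ at $e_B$. The $T$-equivariant $K$-class of such an affine toric variety is the $T$-graded Hilbert series of its semigroup algebra, which depends only on the cone of $P(M)$ at $e_B$, hence only on $M$.

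The main obstacle is justifying the identification that $Y$ is the normal projective toric variety with polytope $P(M)$, together with the local matching at each $p_B$. This identification rests on the classical theory of torus orbit closures in Grassmannians and on normality of such orbit closures --- the same normality used implicitly elsewhere in the paper when invoking toric resolutions of singularities. Once normality is in hand, matching the local cone of $Y$ at $p_B$ with the tangent cone of $P(M)$ at $e_B$ is a bookkeeping computation with the coordinates of the Pl\"ucker chart. With that, matroid invariance of each $[Y]|_{p_B}$, and therefore of the global class $[Y] \in K^T_0(Gr(r,n))$, follows at once.
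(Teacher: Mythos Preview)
The paper does not supply its own proof of this statement; it is quoted as a result of Speyer.  Your outline is essentially Speyer's argument, and it is the same localization machinery the present paper deploys in \cref{sec:Kpoly}, where $\Hilb(k[X_v])$ is computed by summing over the $T$-fixed points $B$ of~$Y$ and the equivariant multiplicities $\Hilb(C_B)$ are identified with data coming from the tangent cones of the matroid polytope (see \eqref{eq:em} and the surrounding discussion).

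Two minor remarks.  First, what restricts to $R(T)$ at $p_B$ is the $K$-polynomial of the local affine model (the numerator of the Hilbert series), not the full Hilbert series; this is only a matter of wording.  Second, you rightly flag normality of $Y$ as the point needing care, and indeed the paper uses it too (e.g.\ in the proof of \cref{prop:ratsingGrassmannian}).  It is a theorem of White that the basis monomial ring of a matroid is normal, so $Y$ really is the normal toric variety of $P(M)$.  But you do not strictly need this for the present argument: even without normality, the affine chart of $Y$ at $p_B$ is $\Spec$ of the semigroup algebra on the characters $e_{B'}-e_B$ as $B'$ ranges over the bases of~$M$, and that semigroup is already a matroid invariant.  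So matroid dependence of each local class, and hence of $[\OO_Y]$, follows without ever invoking normality.
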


As mentioned above, the class of $Y$ can be expressed as a
linear combination of classes of Schubert varieties.
These have lifts to~$\AA^{r \times n}$,
known as \emph{matrix Schubert varieties} \cite{km},
which we will use in an analogous expression for the class of~$X_v$.
Let $\Omega_\lambda$ be a Schubert variety in $Gr(r,n)$, and let
$X_\lambda^\circ$ denote the locus of $r$-by-$n$ matrices in
$\AA^{r \times n}$ whose row space lies in $\Omega_\lambda$. The
closure of $X_\lambda^\circ$ in $\AA^{r \times n}$, denoted
$X_\lambda$, is a matrix Schubert variety.

\begin{theorem}\label{thm:matroidinv}  Let $v$ be a rank $r$ matrix.
  \begin{enumerate}
  \item  The class of the structure sheaf of
    $X_v$ in $K^G_0(\AA^{r \times n})$ can be determined from the
    matroid of $v$ alone.
  \item The class of $X_v$ can be written as a
    $\ZZ[t_1^{\pm 1},\dots,t_n^{\pm 1}]$-linear combination of the
    classes of matrix Schubert varieties, $X_\lambda$, where
    $\lambda_1 \leq n-r$.
  \end{enumerate}
\end{theorem}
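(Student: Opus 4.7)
The plan is to transfer Speyer's matroid invariance (\cref{thm:speyer}) from $K^T_0(Gr(r,n))$ to $K^G_0(\AA^{r\times n})$ via an explicit pushforward map, and simultaneously translate Schubert classes on the Grassmannian into matrix Schubert classes on $\AA^{r \times n}$. Let $E \subseteq \AA^{r\times n} \times Gr(r,n)$ be the $G$-equivariant subbundle $\mathcal{S}^{\oplus r}$ of the trivial bundle with fiber $\AA^{r\times n}$, and let $p_1, p_2$ be the projections onto $\AA^{r\times n}$ and $Gr(r,n)$ respectively. Since $p_1$ is proper, the assignment $\Phi([\mathcal{F}]) = (p_1)_*(p_2^*[\mathcal{F}] \otimes [\OO_E])$ defines an $R(T)$-linear map $\Phi : K^T_0(Gr(r,n)) \to K^G_0(\AA^{r\times n})$ by the projection formula.

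The key step is to evaluate $\Phi$ on two families of classes. First, $\Phi([\OO_Y]) = [X_v]$: we have $p_2^*\OO_Y \otimes \OO_E = \OO_Z$ for $Z = E|_Y$ the variety appearing in \cref{prop:ratsingGrassmannian}, and that proposition yields $(p_1)_*\OO_Z = k[X_v]$ with vanishing higher direct images. Second, $\Phi([\OO_{\Omega_\lambda}]) = [X_\lambda]$ for each Schubert variety $\Omega_\lambda \subset Gr(r,n)$: the argument of \cref{prop:ratsingGrassmannian} transplants verbatim with $Y$ replaced by $\Omega_\lambda$, using that Schubert varieties have rational singularities (Ramanathan), that $E|_{\Omega_\lambda}$ therefore has rational singularities by \cref{prop:vb}, and that $E|_{\Omega_\lambda} \to X_\lambda$ is proper and birational (being an isomorphism over the full-rank locus of $X_\lambda$).

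Finally, the structure sheaves $\{[\OO_{\Omega_\lambda}]\}_{\lambda \subseteq r \times (n-r)}$ form an $R(T) = \ZZ[t_1^{\pm 1},\ldots,t_n^{\pm 1}]$-basis of $K^T_0(Gr(r,n))$, by the standard equivariant $K$-theory of the Grassmannian. Writing
\[
[\OO_Y] = \sum_{\lambda \subseteq r \times (n-r)} c_\lambda(t)\,[\OO_{\Omega_\lambda}], \qquad c_\lambda(t) \in \ZZ[t_1^{\pm 1},\ldots,t_n^{\pm 1}],
\]
and applying the $R(T)$-linear map $\Phi$ yields $[X_v] = \sum_\lambda c_\lambda(t)\,[X_\lambda]$, which is part~(2). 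Part~(1) is then immediate from \cref{thm:speyer}, since the coefficients $c_\lambda(t)$ depend only on the matroid of $v$, and hence so does $[X_v]$. I expect the main point requiring care to be the identification $\Phi([\OO_{\Omega_\lambda}]) = [X_\lambda]$: all the ingredients are at hand, but they must be assembled so that $(p_1)_*\OO_{E|_{\Omega_\lambda}} = \OO_{X_\lambda}$ on the nose, with no higher direct images and no line bundle twist.
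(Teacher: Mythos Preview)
Your proposal is correct and follows essentially the same approach as the paper: both construct the transfer map $[\mathcal{F}]\mapsto s_*\bigl([\mathcal{S}^{\oplus r}]\cdot\pi^*[\mathcal{F}]\bigr)$ from $K^T_0(Gr(r,n))$ to $K^G_0(\AA^{r\times n})$, evaluate it on $[\OO_Y]$ via \cref{prop:ratsingGrassmannian} and on $[\OO_{\Omega_\lambda}]$ via the rational singularities of (matrix) Schubert varieties, and then invoke \cref{thm:speyer} together with the Schubert basis expansion. The only cosmetic difference is that you package the transfer as a single $R(T)$-linear map $\Phi$ and derive part~(1) from part~(2), whereas the paper writes the formula $[\OO_{X_v}]=s_*([\mathcal{S}^{\oplus r}]\cdot\pi^*[\OO_Y])$ directly for part~(1) before expanding in Schubert classes.
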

\begin{proof}[Proof of \cref{thm:matroidinv}(1)]
  Let $Y \subset Gr(r,n)$ be the torus orbit closure through the row
  span of $v$. Recall that $G = \GL_r \times T$. Then $G$ acts on the
  Grassmannian $Gr(r,n)$, where the $\GL_r$-factor acts
  trivially. So there is a natural isomorphism of
  $R(G) = R(\GL_r)\otimes R(T)$-modules,
  \[
    R(\GL_r) \otimes K^T_0(Gr(r,n)) \approx K^G_0(Gr(r,n)).
  \]
  Under this isomorphism $1 \otimes [\OO_Y]$ maps to $[\OO_Y]$.

  Now, we use \cite[Corollary 12]{handbook} to see that the projection
  map $\pi : \AA^{r \times n} \times Gr(r,n) \to Gr(r,n)$ induces a
  pullback isomorphism
  \[
    \pi^*:K^G_0(Gr(r,n)) \to K^G_0(\AA^{r \times n} \times Gr(r,n)).
  \]
  The pullback of $\OO_Y$ is $\OO_{\AA^{r \times n} \times Y}$. We
  now multiply by the class of the vector bundle $\S^{\oplus r}$,
  which is say, the class of its locally free sheaf of sections. The
  result is the class of $\OO_{Z}$, which is the class of the sheaf of
  sections of the restriction of $\S^{\oplus r}$ to $Y$.  Now we apply
  $s_*$ to $[\OO_{{Z}}]$, which by \cref{prop:ratsingGrassmannian},
  gives $[\OO_{X_v}]$. In summary,
  \[
    [\OO_{X_v} ] = s_*( [\mathcal{S}^{\oplus r}]\cdot \pi^* [\OO_Y]),
  \]
  and since the right side is determined by the matroid of $v$, by \cref{thm:speyer}, so is the left.
\end{proof}
\begin{proof}[Proof of \cref{thm:matroidinv}(2)]
  In $K^G_0(Gr(r,n))$ write
  $[\OO_Y] = \sum_\lambda c_\lambda [\Omega_\lambda]$, where the sum
  is over partitions $\lambda$ with $\lambda_1 \leq n-r$ and
  $c_\lambda \in \ZZ[t_1^{\pm 1},\dots,t_n^{\pm 1}]$ (since $\GL_r$
  acts trivially on $Y$). A matrix Schubert variety $X_\lambda$ has
  rational singularities \cite[Theorem 2.4.3]{km} and is
  (partially) resolved by the total space of the vector bundle
  $\mathcal{S}^{\oplus r}$ restricted to $\Omega_\lambda$, 
  just as we have done for $X_v$ in part~(1).
  Thus,
  \begin{align*}
    [\OO_{X_v} ] &= s_*( [\mathcal{S}^{\oplus r}]\cdot \pi^* [\OO_Y])\\
                 &=
    s_*\left([\mathcal{S}^{\oplus r}]\cdot \pi^* \sum_\lambda c_\lambda
                   [\Omega_\lambda]\right) \\
                 &=  \sum_\lambda c_\lambda
                   s_*\left([\mathcal{S}^{\oplus r}]\cdot \pi^*[\Omega_\lambda]\right) = \sum_\lambda c_\lambda [\OO_{X_\lambda}]. \qedhere
  \end{align*}
\end{proof}

We note the following immediate corollary of the proof.
\begin{corollary}\label{cor:sameschubertexp}
  In the notation of the previous results, if we uniquely expand
  $[\mathcal{O}_Y]$ in the Schubert basis,
  \[
    [\mathcal{O}_{Y}] = \sum_\lambda c_\lambda [\Omega_\lambda] \in
    K^T_0(Gr(r,n)),
  \]
  then, for the same coefficients $\{c_\lambda\}$ we have,
  \[
    [\mathcal{O}_{X_v}] = \sum_\lambda c_\lambda [X_\lambda] \in
    K^G_0(\AA^{r \times n}).
  \]
\end{corollary}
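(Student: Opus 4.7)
The plan is to recognize that this corollary is essentially a repackaging of the chain of equalities carried out inside the proof of Theorem \ref{thm:matroidinv}(2). My approach is to abstract that argument into a single $R(T)$-linear map and then observe that the corollary follows by applying this map term-by-term to the Schubert expansion of $[\OO_Y]$.

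Concretely, define
\[
\Phi \colon K^T_0(Gr(r,n)) \longrightarrow K^G_0(\AA^{r \times n}), \qquad \Phi(\alpha) = s_*\bigl([\mathcal{S}^{\oplus r}] \cdot \pi^*\alpha\bigr),
\]
where $\pi$ and $s$ are the projections from the commutative diagram in Section 4 (with $Y$ replaced as appropriate for each basis element). The first step is to check that $\Phi$ is $\ZZ[t_1^{\pm 1},\dots,t_n^{\pm 1}]$-linear, which follows from the projection formula for $s_*$ together with the fact that $\pi^*$ is a ring homomorphism; the coefficients $c_\lambda$ lie in this Laurent polynomial subring of $R(G)$ because $\GL_r$ acts trivially on $Gr(r,n)$, matching the identification $K^G_0(Gr(r,n)) \cong R(\GL_r) \otimes K^T_0(Gr(r,n))$ used earlier.

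The second step is to record the two input identities $\Phi([\OO_Y]) = [\OO_{X_v}]$ and $\Phi([\Omega_\lambda]) = [X_\lambda]$. The first is exactly Proposition \ref{prop:ratsingGrassmannian}, as repackaged in the proof of Theorem \ref{thm:matroidinv}(1). The second is the analogous statement for matrix Schubert varieties, proved by the identical recipe: the total space of $\mathcal{S}^{\oplus r}$ restricted to $\Omega_\lambda$ is a partial desingularization of $X_\lambda$, and since both source and target have rational singularities (Schubert varieties classically, matrix Schubert varieties by \cite[Theorem 2.4.3]{km}), Proposition \ref{prop:Gss} yields the vanishing of the higher direct images together with the identification of $s_*\OO$ with $\OO_{X_\lambda}$.

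Once these identities are in place, the corollary is immediate: applying $\Phi$ to the hypothesized Schubert expansion gives
\[
[\OO_{X_v}] = \Phi([\OO_Y]) = \Phi\Bigl(\sum_\lambda c_\lambda [\Omega_\lambda]\Bigr) = \sum_\lambda c_\lambda\,\Phi([\Omega_\lambda]) = \sum_\lambda c_\lambda [X_\lambda].
\]
I do not anticipate any genuine obstacle; the substance is really the observation that the argument of Theorem \ref{thm:matroidinv}(2) is natural in $[\OO_Y]$ and therefore transports coefficients identically from one Schubert expansion to the other. All the geometric content — rational singularities of $X_v$, of $X_\lambda$, and of the toric $Y$ — was already developed in Sections 3 and 4.
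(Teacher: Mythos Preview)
Your proposal is correct and is essentially identical to the paper's argument: the corollary is stated there as ``immediate'' from the proof of Theorem~\ref{thm:matroidinv}(2), which carries out exactly the chain of equalities you have written, only without naming the map~$\Phi$. Packaging the construction as an $R(T)$-linear operator is a clean way to say the same thing.
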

\section{Consequences of matroid invariance}\label{sec:consequences}
In this section we state the consequences of the matroid invariance of
$[X_v]$, which we studied further in \cite{moc}. Let $v_1,\dots,v_n$
denote the columns of the matrix $v$. Write $G(v)$ for the span in
$(k^r)^{ \otimes n}$ of the tensors
\[
(gv_1) \otimes (g v_2) \otimes \dots \otimes (gv_n), \quad g \in \GL_r.
\]
Clearly $G(v)$ is a representation of $\GL_r$.
\begin{corollary}
  The class of $G(v)$ in $R(\GL_r)$ is determined by the matroid of~$v$.
\end{corollary}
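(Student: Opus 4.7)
The plan is to realize the dual representation $G(v)^*$ as an explicit graded component of the coordinate ring $k[X_v]$, invoke the matroid invariance of the equivariant $K$-class $[X_v]$ to conclude that the class of $G(v)^*$ in $R(\GL_r)$ is matroid-invariant, and finally apply the duality involution on $R(\GL_r)$ to pass from $G(v)^*$ back to $G(v)$.

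First, I would identify the multidegree-$(1,1,\dots,1)$ component of $k[X_v]$ with respect to the $T^n$-grading induced by column scaling. Since $x_{ij}$ has $T^n$-weight $t_j$, this component of $A = k[\AA^{r\times n}]$ is $((k^r)^*)^{\otimes n}$, viewed as the space of multilinear forms $(k^r)^n \to k$. Because $T^n$ simply rescales each column independently, a multilinear form $f$ lies in the defining ideal of $X_v$ if and only if $f(gv_1,\dots,gv_n) = 0$ for all $g\in\GL_r$; under the natural pairing, this says precisely that $f$ annihilates the span $G(v)\subseteq (k^r)^{\otimes n}$. The resulting $\GL_r$-equivariant identification
\[
(k[X_v])_{(1,\dots,1)} \;\cong\; ((k^r)^*)^{\otimes n}/\mathrm{Ann}(G(v)) \;\cong\; G(v)^*
\]
is the key step.

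Next I would extract the class of this graded piece from the $K$-class of $X_v$. By \cref{ex:tu}, the class $[X_v]\in K^G_0(\AA^{r\times n})$ is equivalent data to the multigraded Hilbert series of $k[X_v]$, and the coefficient of $t_1t_2\cdots t_n$ in that Hilbert series is exactly the $\GL_r$-character of $(k[X_v])_{(1,\dots,1)}$. Since \cref{thm:matroidinv}(1) tells us this coefficient depends only on the matroid of $v$, the class of $G(v)^*$ in $R(\GL_r)$ depends only on the matroid of $v$.

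Finally, I would note that the involution $W\mapsto W^*$ on rational $\GL_r$-representations corresponds at the level of characters to the Weyl-invariant substitution $u_i\mapsto u_i^{-1}$, and hence defines an involution on $R(\GL_r) = \ZZ[u_1^{\pm1},\dots,u_r^{\pm1}]^{S_r}$. Applying it to the matroid-invariant class of $G(v)^*$ gives the matroid-invariant class of $G(v)$, finishing the argument. The main obstacle is the identification in the first step: one must verify that the $T^n$-action produces no extra vanishing constraints beyond those imposed by the $\GL_r$-orbit, so that ideal-theoretic multilinear vanishing on $X_v$ reduces exactly to the annihilator condition against tensors in $G(v)$.
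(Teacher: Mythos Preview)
Your proposal is correct and follows essentially the same route as the paper. The paper simply records that the character of $G(v)$ equals the coefficient of $t_1t_2\cdots t_n$ in $\Hilb(k[X_v])$, deferring the verification to \cite{moc}; you supply that verification directly by identifying the multidegree-$(1,\dots,1)$ component of $k[X_v]$ with $G(v)^*$ and then dualizing. Your closing ``main obstacle'' is in fact not an obstacle: for a function homogeneous of $T^n$-degree $(1,\dots,1)$, vanishing on the dense orbit $\GL_r v\, T^n$ is equivalent to vanishing on $\GL_r v$, since the $T^n$-action just multiplies by a nonzero scalar, so no extra constraints arise. The only cosmetic difference is that the paper's sign convention may absorb the dualization you perform explicitly; either way the argument is the same.
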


Let $\mathfrak{S}(v)$ denote the span in $(k^r)^{ \otimes n}$ of the
tensors
\[
  v_{w_1} \otimes v_{w_2} \otimes \dots \otimes v_{w_n}, \quad w \in S_n
\]
Clearly $\mathfrak{S}(v)$ is a representation of $S_n$.
\begin{corollary}
  The class of $\mathfrak{S}(v)$ in $R(S_n)$ is determined by the
  matroid of~$v$.
\end{corollary}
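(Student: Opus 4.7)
The plan is to deduce the claim from the preceding corollary (matroid invariance of $[G(v)] \in R(\GL_r)$) via Schur--Weyl duality. The bridge is the tensor $\tilde v := v_1 \otimes \cdots \otimes v_n \in (k^r)^{\otimes n}$: the space $G(v)$ is the linear span of the $\GL_r$-orbit of $\tilde v$ under the diagonal action, while $\mathfrak{S}(v)$ is the linear span of the $S_n$-orbit of $\tilde v$ under permutation of tensor factors.

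First I would invoke Schur--Weyl duality, giving the $\GL_r \times S_n$-module decomposition
\[
  (k^r)^{\otimes n} \;=\; \bigoplus_{\lambda} \SS^\lambda(k^r) \otimes W_\lambda,
\]
summed over partitions $\lambda$ of $n$ with at most $r$ parts, where $\SS^\lambda(k^r)$ is an irreducible $\GL_r$-module and $W_\lambda$ is the corresponding irreducible $S_n$-module. Decompose $\tilde v = \sum_\lambda t_\lambda$ with $t_\lambda \in \SS^\lambda(k^r) \otimes W_\lambda$, and for each $\lambda$ fix a rank decomposition $t_\lambda = \sum_{i=1}^{r_\lambda(v)} a_{\lambda,i} \otimes b_{\lambda,i}$ in which $\{a_{\lambda,i}\}$ and $\{b_{\lambda,i}\}$ are each linearly independent. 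Because $\SS^\lambda(k^r)$ is $\GL_r$-irreducible, the $\GL_r$-span of $t_\lambda$ is all of $\SS^\lambda(k^r) \otimes \operatorname{span}\{b_{\lambda,i}\}$; summing over $\lambda$ yields $G(v) = \bigoplus_\lambda \SS^\lambda(k^r) \otimes \operatorname{span}\{b_{\lambda,i}\}$. The symmetric argument using $S_n$-irreducibility of $W_\lambda$ gives $\mathfrak{S}(v) = \bigoplus_\lambda \operatorname{span}\{a_{\lambda,i}\} \otimes W_\lambda$.

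Passing to the representation rings, this yields
\[
  [G(v)] = \sum_\lambda r_\lambda(v)\,[\SS^\lambda(k^r)] \in R(\GL_r) \quad\text{and}\quad [\mathfrak{S}(v)] = \sum_\lambda r_\lambda(v)\,[W_\lambda] \in R(S_n),
\]
with the same multiplicities $r_\lambda(v)$. By the preceding corollary, $[G(v)]$ depends only on the matroid of $v$; since the classes $[\SS^\lambda(k^r)]$ are $\ZZ$-linearly independent in $R(\GL_r)$, each $r_\lambda(v)$ depends only on the matroid, and hence so does $[\mathfrak{S}(v)]$. There is no serious obstacle here: the content is concentrated entirely in Schur--Weyl duality combined with the preceding corollary, and the only step requiring care is the orbit-span description of $G(v)$ and $\mathfrak{S}(v)$, which is immediate from irreducibility of $\SS^\lambda(k^r)$ and $W_\lambda$.
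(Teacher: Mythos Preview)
Your proposal is correct and follows exactly the approach the paper takes: the paper simply states that ``the second corollary follows from the first by Schur--Weyl duality'' and refers to \cite{moc} for details, and what you have written is precisely the expected elaboration of that sentence. The tensor-rank argument identifying the common multiplicities $r_\lambda(v)$ in $[G(v)]$ and $[\mathfrak{S}(v)]$ is the standard way to make the Schur--Weyl step explicit.
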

Describing the irreducible decomposition of these representations was
the motivation for studying $X_v$ and its class in $K$-theory. The
first corollary follows because the character of $G(v)$ is the
coefficient of $t_1t_2 \dots t_n$ in $\Hilb(k[X_v])$. The second
corollary follows from the first by Schur-Weyl duality. For the proofs
of these statements and further information, we refer the reader to
\cite{moc}.
\section{Explicit formula for the $K$-class of $X_v$}\label{sec:Kpoly}
In this section we compute a formula for the class of $X_v$ in
$K^G_0(\AA^{r\times n})$ using equivariant localization. This class is
represented as the numerator of the Hilbert series $\Hilb(k[X_v])$, as
explained in \cref{sec:K}.

Let $\mathcal{E}$ be a finite dimensional $T$-equivariant vector
bundle over $Y$. 
We use the same symbol $\mathcal{E}$ for its sheaf of sections.
The equivariant Euler characteristic of $\mathcal{E}$
is $\chi_T(\mathcal{E}) = \sum_i (-1)^i [H^i(Y,\mathcal{E})]$, where
$[-]$ means to compute the character as a representation of $T$.

Since we have shown the $G$-equivariant identification
$H^0( Y,\Sym((\mathcal{S}^*)^{\oplus r})) = k[X_v]$, and the higher cohomology
groups of $\Sym((\mathcal{S}^*)^{\oplus r})$ on~$Y$ vanish, we can compute the
Hilbert series of $k[X_v]$ by computing the $T$-equivariant Euler
characteristic of $\Sym((\mathcal{S}^*)^{\oplus r})$. 

To
state the formula for $\chi_T(\mathcal{E})$ we let $T_B Y$ be the
Zariski tangent space of $Y$ at a fixed point $B$, and let
$C_B \subset T_B Y$ be the tangent cone. The localization formula given in the corollary to 
\cite[Theorem~1.5]{anderson} states that there is an equality in
$\QQ(u_1,\dots,u_r,t_1,\ldots,t_n)$, the field of fractions of $R(T)$,
to wit
\[
  \chi_T(\mathcal{E}) = \sum_{B \in Y^T} [\mathcal{E}_B] \cdot \Hilb(C_B),
\]
where the sum is over $T$-fixed points $B$ of~$Y$. This formula finds
its motivation in the Atiyah-Bott-Berline-Vergne localization formula
for smooth varieties. (Alternately, one could pass to a
desingularization of $Y$ and compute the Euler characteristic of the
pullback of $\Sym((\mathcal{S}^*)^{\oplus r})$ using the 
aforementioned formula.) Above,
$\Hilb(C_B)\in\QQ(u_1,\dots,u_r,t_1,\ldots,t_n)$ is the Hilbert series
of $C_B \subset T_B$ and $\mathcal{E}_B$ is the fiber of $\mathcal{E}$
over $B$.  The formula applies in our case since $Y$ has finitely many
$T$-fixed points and the trivial character does not appear in
$[T_B Y]$. The Hilbert series $\Hilb(C_B)$ are referred to as
``equivariant multiplicities'' in \cite{anderson} (see Proposition~6.3
in \textit{loc.\,cit.}).

We now compute $\chi_T(\Sym^m((\S^*)^{\oplus r}))$ using this
formula. The $T$-fixed points of $Y$ correspond to the bases $B$ of
the matroid $M$ of $v$. Over a fixed point $B$, we can compute the
character of $\Sym((\S^*)^{\oplus r})$ to be
\[
  \prod_{j \in B} \prod_{i \in [r]} \frac{1}{1-u_it_j}
\]
The equivariant multiplicities were computed in \cite[Lemma 5.2]{chow} as
\begin{equation}\label{eq:em}
  \Hilb(C_B) = \sum_{(w_1,\dots,w_n)} \prod_{i=1}^{n-1} \frac{1}{1-t_{w_{i+1}}/t_{w_i}},
\end{equation}
where the sum is over those permutations $w \in S_n$ whose
lexicographically first basis is $B$.  Putting all this together and
condensing the summation over fixed points and permutations gives
\[
  \Hilb(k[X_v]) =
  \chi_T(\Sym((\S^*)^{\oplus r}) = 
  \sum_{w \in S_n}   \prod_{j \in B(w)} \prod_{i \in [r]} \frac{1}{1-u_it_j}
  \prod_{i=1}^{n-1} \frac{1}{1-t_{w_{i+1}}/t_{w_i}},
\]
where $B(w)$ denotes the lexicographically first basis of the matroid
$M$ occurring in the list $w = (w_1,\dots,w_n)$. Finally, to obtain
the $K$-theory class $\K(k[X_v])$ we multiply by
$\prod_{j \in [n]} \prod_{i \in [r]} (1-u_it_j)$.
\begin{theorem}\label{thm:anderson}
  The class of $X_v$ in $K^G_0(\AA^{r \times n})$ is given by the
  formula
  \[
    \K(k[X_v]) =   \sum_{w \in S_n}   \prod_{j \notin B(w)} \prod_{i \in [r]} ({1-u_it_j}) \cdot 
    \prod_{i=1}^{n-1} \frac{1}{1-t_{w_{i+1}}/t_{w_i}}.
  \]
\end{theorem}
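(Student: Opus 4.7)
The plan is to compute $\K(k[X_v])$ by computing the equivariant Hilbert series of $k[X_v]$ and then clearing the standard denominator $\prod_{i,j}(1-u_it_j)$. The starting point is the identification established in \cref{prop:coordring} and used in \cref{prop:ratsingGrassmannian}: as a $G$-equivariant $A$-module, $k[X_v] = H^0(Y,\Sym((\mathcal{S}^*)^{\oplus r}))$, and the higher cohomology of $\Sym((\mathcal{S}^*)^{\oplus r})$ on $Y$ vanishes by \cref{thm:bwb} (applied degree-by-degree via Cauchy's formula, as in the proof given there). Consequently the multigraded Hilbert series of $k[X_v]$ equals the $T$-equivariant Euler characteristic $\chi_T(\Sym((\mathcal{S}^*)^{\oplus r}))$ on $Y$.

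Next I would apply the equivariant localization formula for singular varieties, as stated in the corollary to \cite[Theorem~1.5]{anderson}, to rewrite this Euler characteristic as a sum over the $T$-fixed points $B\in Y^T$:
\[
\chi_T\bigl(\Sym((\mathcal{S}^*)^{\oplus r})\bigr)=\sum_{B\in Y^T}\bigl[\Sym((\mathcal{S}^*)^{\oplus r})_B\bigr]\cdot \Hilb(C_B).
\]
Here I must verify the hypotheses that $Y^T$ is finite and that the trivial character does not occur in $T_BY$; both follow from $Y$ being a (normal) toric variety for~$T$ with the points of $Y^T$ corresponding to the bases of the matroid $M$ of $v$. The fiber term is a routine computation: over the fixed point indexed by basis $B$, the tautological bundle $\mathcal{S}$ has $T$-character $\sum_{j\in B}t_j$, so $\Sym((\mathcal{S}^*)^{\oplus r})_B$ contributes $\prod_{j\in B}\prod_{i\in[r]}\tfrac{1}{1-u_it_j}$.

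For the equivariant multiplicity $\Hilb(C_B)$ I would quote \cite[Lemma 5.2]{chow}, which evaluates $\Hilb(C_B)$ as a sum over those permutations $w\in S_n$ whose lexicographically first basis (relative to the order $w_1,w_2,\dots,w_n$) equals $B$, with summand $\prod_{i=1}^{n-1}\tfrac{1}{1-t_{w_{i+1}}/t_{w_i}}$. Substituting this into the localization formula and merging the outer sum over fixed points $B$ with the inner sum over permutations into a single sum over $w\in S_n$ (indexed so that $B=B(w)$) yields
\[
\Hilb(k[X_v])=\sum_{w\in S_n}\prod_{j\in B(w)}\prod_{i\in[r]}\frac{1}{1-u_it_j}\cdot \prod_{i=1}^{n-1}\frac{1}{1-t_{w_{i+1}}/t_{w_i}}.
\]
Finally, multiplying through by $\prod_{j\in[n]}\prod_{i\in[r]}(1-u_it_j)$ converts the Hilbert series into the $K$-polynomial, cancelling precisely the factors indexed by $j\in B(w)$ and leaving exactly the claimed formula.

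The main obstacle I anticipate is book-keeping rather than conceptual: I must carefully check that the singular localization formula is applicable to $Y$ (finiteness of $Y^T$, absence of the trivial $T$-weight in each $T_BY$, and that the Laurent series expansions of each summand are taken in a common cone so that the rational-function identity is meaningful), and I must confirm that reindexing the double sum over $(B,w)$ with $B=B(w)$ as a single sum over $w\in S_n$ correctly accounts for every pair exactly once. The cohomology vanishing input is already handled by \cref{thm:bwb}, and the $T$-character computation at a fixed point is immediate from the definition of $\mathcal{S}$, so the substantive work is confined to the localization step and the algebraic simplification.
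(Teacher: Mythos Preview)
Your proposal is correct and follows essentially the same route as the paper: identify $\Hilb(k[X_v])$ with $\chi_T(\Sym((\mathcal{S}^*)^{\oplus r}))$ on~$Y$ via the cohomology vanishing of \cref{thm:bwb}, apply the singular localization formula of \cite[Theorem~1.5]{anderson} over the bases of~$M$, insert the fiber characters and the equivariant multiplicities from \cite[Lemma~5.2]{chow}, collapse the double sum into a single sum over~$S_n$, and clear the denominator. The hypotheses you flag (finiteness of $Y^T$, absence of the trivial weight in $T_BY$) are exactly those the paper checks, and the reindexing concern is handled since every $w\in S_n$ determines a unique $B(w)$.
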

Note that the left side is \emph{a priori} a Laurent polynomial in
$u_1,\dots,u_r$ and $t_1,\dots,t_n$. We have thus shown the
claims of the second part of the theorem from the introduction.
\begin{example}
Let $v$ represent the matroid $M$ of rank $r=2$ on $n=4$ elements with bases
\[\{\{1,2\},\{1,3\},\{2,3\},\{1,4\},\{2,4\}\}.\]
We first evaluate the equivariant multiplicities. 
One way to do this is afforded by the proof of \cite[Lemma 5.2]{chow}:
they are Hilbert series of affine toric varieties, and therefore lattice point enumerators of cones.
The cones in question are the tangent cones to $P(M)$.
For this $M$, the toric variety $Y$ is smooth except at the fixed point $\operatorname{span}\{e_1,e_2\}$,
so the tangent cones other than $C_{\{1,2\}}$ are unimodular simplicial cones. We get
\[\Hilb(C_{\{1,3\}}) = \frac1{(1-t_2/t_3)(1-t_4/t_3)(1-t_2/t_1)}\]
and
\[\Hilb(C_{\{1,2\}}) = \frac{1 - t_3t_4/t_1t_2}{(1-t_3/t_2)(1-t_4/t_2)(1-t_3/t_1)(1-t_4/t_1)},\]
and the other three equivariant multiplicities are images of~$\Hilb(C_{\{1,3\}})$ under
transposing 1 and~2 and/or transposing 3 and~4.

The reader may check that the same rational functions are obtained from \eqref{eq:em}.
To facilitate this we describe the permutations $w$ that achieve $B=B(w)$ for each basis~$B$.
If $\{w_1,w_2\}$ is a basis of~$M$, then $B(w) = \{w_1,w_2\}$; this gives four permutations for each basis.
The $4!-5\cdot4 = 4$ permutations unaccounted for are $3412$, $3421$, $4312$, and $4321$, 
whose lexicographically first bases are $\{1,3\}$, $\{2,3\}$, $\{1,4\}$, and $\{2,4\}$ respectively.

Using these equivariant multiplicities, the sum in \Cref{thm:anderson} works out to
\begin{equation}\label{eq:example K}
\K(k[X_v]) = 1 - u_1u_2t_3t_4.
\end{equation}
Elements $3$ and~$4$ are parallel, so $M$ is a parallel extension of
the uniform matroid $U_{2,3}$, and therefore the above class can also
be computed from Theorem~9.3 or Proposition~9.5 of~\cite{moc}. The
procedure described in \cite[Theorem~9.3]{moc} is to apply a Demazure
divided difference operator $\delta_3$ to the class of the matroid
$U_{2,3} \oplus U_{0,1}$, which is $(1-u_1t_4)(1-u_2t_4)$ since the
associated matrix orbit closure is a linear subspace of
$\AA^{2 \times 4}$. Application of the divided difference gives
\begin{align*}
  \delta_3 ( (1-u_1t_4)(1-u_2t_4) )  &= \frac{(1-u_1t_4)(1-u_2t_4) - t_4/t_3 (1-u_1t_3)(1-u_2t_3)}{1 - t_4/t_3}\\&= 1-u_1u_2t_3t_4.
\end{align*}

We check the agreement of this class with \Cref{thm:binglin},
first deriving the Chow class featuring in that theorem from \eqref{eq:example K}.
The class of $X_v$ in $K^{T}_0(\AA^{r\times n})$, forgetting the $\GL_r$-action, 
can be computed by evaluating the $u_i$ at~1.
The resulting class is $1 - t_3t_4$.
The class of $V = p(X_v)$ in 
\[K_0((\PP^{r-1})^n) = \ZZ[t_1,\dots,t_n]/((1-t_1)^r,\dots, (1-t_n)^r)\]
is represented by this same polynomial.
Finally, \cite[Section~8.5]{millerSturmfels} asserts that the class of $V$ in the Chow ring $A^*((\PP^{r-1})^n)$
is the sum of terms of lowest degree after substituting $1-t_i$ for $t_i$ ($i\in[n]$) in the last $K$-class.
The substitution yields
\[1 - (1-t_3)(1-t_4) = t_3 + t_4 - t_3t_4\]
so the Chow class of $V$ is $t_3+t_4$.

For the other side of the comparison we must compute the set $S(M)$.
As singletons have rank~1 in~$M$ it follows that $S(M)\subseteq\{0,1\}^4$,
and of the remaining inequalities defining $S(M)$ the only one not automatically satisfied once 
$\sum_{i=1}^4 s_i = 2^2-1 = 3$
is $s_3+s_4<2$.  Therefore
\[S(M) = \{(1,1,0,1),(1,1,1,0)\},\]
from which \Cref{thm:binglin} also produces the Chow class $t_3+t_4$.
\end{example}

We now address the polynomiality of the rational function
\[
  \K(M) = \sum_{w \in S_n} \prod_{j \notin B(w)} \prod_{i \in [r]}
  ({1-u_it_j}) \cdot \prod_{i=1}^{n-1}
  \frac{1}{1-t_{w_{i+1}}/t_{w_i}}.
\]
for arbitrary matroids $M$, which we will call the $K$-class of $M$.
\begin{theorem}\label{thm:valuative}
  For any matroid $M$ of rank $r$ on $[n]$, $\K(M)$ is a polynomial in
  $u_1,\dots,u_r,t_1,\dots,t_n$.
\end{theorem}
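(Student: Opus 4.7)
The strategy is to deduce polynomiality from valuativity, reducing to the realizable case. By grouping the permutations $w$ in the given sum according to their lex-first basis $B(w)$, the formula for $\K(M)$ becomes
\[
\K(M) = \sum_{B} \Big(\prod_{j \notin B} \prod_{i \in [r]}(1-u_it_j)\Big) \mu_B(M),
\]
where $B$ ranges over the bases of $M$ and $\mu_B(M) := \sum_{w \in S_n,\ B(w)=B}\prod_{i=1}^{n-1}(1-t_{w_{i+1}}/t_{w_i})^{-1}$. By \cite[Lemma~5.2]{chow}, as recalled just before \cref{thm:anderson}, this $\mu_B(M)$ is the equivariant multiplicity at the vertex $e_B$ of the matroid polytope $P(M)$, and so depends only on the tangent cone to $P(M)$ at that vertex.

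The key step is to verify that $M \mapsto \K(M)$ is a \emph{valuative} matroid invariant, in the sense that any $\ZZ$-linear relation $\sum_i a_i \chi_{P(M_i)} = 0$ among indicator functions of matroid polytopes implies $\sum_i a_i \K(M_i) = 0$ as rational functions. Under the above reformulation, $\K(M)$ is a Brion-style vertex sum: a sum, over vertices $e_B$ of $P(M)$, of the vertex-dependent polynomial weight $f(B) := \prod_{j \notin B}\prod_i(1-u_it_j)$ against the tangent-cone generating function $\mu_B(M)$. Since $f$ extends naturally to a polynomial defined on all of $\{0,1\}^n$, and since matroid polytopes are $0/1$ polytopes whose lattice points coincide with their vertices, the polynomial-weighted form of Brion's theorem shows that this vertex sum defines a valuation of $P(M)$.

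Finally, by a theorem of Derksen and Fink, Schubert matroids---which are all realizable---span the valuative group of matroids of rank $r$ on $[n]$. For realizable $M$, \cref{thm:anderson} says $\K(M)$ equals $[\OO_{X_v}] \in K^G_0(\AA^{r \times n})$, which is manifestly a polynomial in the $u_i$ and $t_j$. Valuativity then expresses $\K(M)$ for arbitrary $M$ as a $\ZZ$-linear combination of such polynomials, proving the claim. The main obstacle is the valuativity step: one must reconcile carefully the vertex-dependent polynomial weights $f(B)$ with the Brion-style localization, so that the full expression assembles into an honest polytope valuation rather than merely a formal sum of valuations with $M$-dependent coefficients.
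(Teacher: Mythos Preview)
Your overall strategy matches the paper's exactly: establish that $M\mapsto\K(M)$ is valuative, invoke Derksen--Fink to write $\K(M)$ as a $\ZZ$-linear combination of classes of Schubert matroids, and conclude via \cref{thm:anderson} since Schubert matroids are realizable. The divergence, and the gap you yourself flag, is in the valuativity step.

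The paper avoids any Brion-style polytope argument. It expands
\[
\K(M)=\sum_{B\in\binom{[n]}{r}}\sum_{w\in S_n}\Big(\prod_{j\notin B}\prod_{i\in[r]}(1-u_it_j)\Big)\,D(w)\,[B=B(w)]
\]
and observes that for each fixed pair $(B,w)$ the indicator $M\mapsto[B=B(w)]$ is itself valuative: writing $B=\{w_{i_1},\dots,w_{i_r}\}$ with $i_1<\cdots<i_r$, the condition $B=B(w)$ amounts to the rank equalities $\operatorname{rk}_M(\{w_1,\dots,w_i\})=\max\{j:i_j\le i\}$ for all~$i$, so \cite[Proposition~5.3]{df} applies directly. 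Thus $\K(M)$ is a finite $\QQ(u,t)$-linear combination of valuative indicators with coefficients \emph{independent of~$M$}, and your worry about ``$M$-dependent coefficients'' never arises. Your route can also be completed, but ``polynomial-weighted Brion'' is not the right name for the missing ingredient: the standard weighted Brion formula involves differential operators applied to the cone generating functions, not evaluation of~$f$ at the vertex. What you actually need is the separate (and easier) lemma that for each fixed lattice point $a$ the assignment $P\mapsto\sigma(\mathrm{TC}_a P)$ is a polytope valuation (it vanishes unless $a$ is a vertex, since cones containing a line have zero rational generating function); summing over the finitely many $a\in\{0,1\}^n$ with weights $f(a)$ then yields the valuativity you want.
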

\begin{proof}
  For a fixed permutation $w \in S_n$ let $B \mapsto [B =B(w)]$ be the
  indicator function of $B(w)$, and write $D(w)$ for $\prod_{i=1}^{n-1} \frac{1}{1-t_{w_{i+1}}/t_{w_i}}$. Then,
  \begin{align*}
    \K(M) &= \sum_{w \in S_n} \prod_{j \notin B(w)}\prod_{i \in [r]} (1-u_it_j) \cdot D(w)\\
          &= \sum_{B \in \binom{[n]}{r}}\sum_{w \in S_n} \prod_{j \notin B}\prod_{i \in [r]} (1-u_it_j) \cdot D(w) \cdot [B = B(w)].  
  \end{align*}
  If $B=\{w_{i_1},\ldots,w_{i_r}\}$ with $i_1<\cdots<i_r$,
  then $B=B(w)$ if and only if
  $\operatorname{rk}_M(\{w_1,\ldots,w_i\}) = \max\{j:i_j\leq i\}$ for all $1\leq i\leq n$.
  Therefore, viewed as a function of~$M$,
  the function $[B =B(w)]$ is valuative on matroid polytope subdivisions by \cite[Proposition 5.3]{df}.
  In the expansion of $\K(M)$ above the only factors depending on~$M$ are the expressions $[B =B(w)]$,
  so $\K(M)$ is a $\QQ(u_1,\dots,u_r,t_1,\dots,t_n)$-linear combination
  of valuative functions, and is therefore valuative itself.
  
  The dual to the abelian group of valuative functions is spanned by Schubert matroids for all orderings of the ground set $[n]$
  \cite[Theorem 5.4]{df} (see also the discussion after that work's Theorem~6.3).
  That is, if $M$ is an arbitrary matroid, then there exist
  Schubert matroids $M_1,\ldots,M_k$ and integers $a_1,\ldots,a_k$ such that
  $\K(M) = \sum_{i=1}^k a_i\K(M_i)$.
  All Schubert matroids are representable over any infinite field,
  so each $\K(M_i)$ is a polynomial because it is the $K$-theory class of a sheaf.
  Thus $\K(M)$ is a polynomial too.
\end{proof}
\begin{remark}
  The polynomiality of the formula for $\K(k[X_v])$ can be deduced
  directly from the cohomological variant of this result proved in
  \cite[Theorem~1]{spinkTseng} (Hunter Spink, private
  communication). From this one can conclude that the right side above
  defines an honest element of $K_0^G(\AA^{r \times n})$ (as opposed
  to an element of its field of fractions) and further, using
  equivariant localization, that it is a lift of
  $[\mathcal{O}_Y] \in K_0^T(Gr(r,n))$. However, without rational
  singularities of $X_v$, or some other means, one cannot conclude
  that this natural lift of $[\mathcal{O}_Y]$ is the class of $X_v$.
\end{remark}

\section{$K$-theoretic positivity}
In this section we consider the Schubert expansion of the class of
matrix orbit closures in $\AA^{r \times n}$. By
\cref{cor:sameschubertexp}, this is equivalent to the Schubert
expansion of a torus orbit closure $Y$ in the Grassmannian.

For matroids realizable over $\CC$, we deduce $K$-theoretic positivity
of the class of $X_v$ via results of Anderson, Griffeth and Miller
\cite{agm}. We conjecture a strengthening of this property for all
matroids and pose a series of progressively weaker conjectures
concerning different types of positivity.

\subsection{Positivity for  matroids realizable over $\CC$}
Let $M$ be a matroid realized be a complex $r$-by-$n$ matrix
$v$. Write $\K(M) = \K(k[X_v])$ for the polynoimal of \cref{sec:Kpoly}.

There are two types of positivity that $\K(k[X_v])$ could exhibit,
being a simultaneous lift of the class of the $T^n$-equivariant
subvariety $Y \subset Gr(r,n)$ as well as the class of the
$T^r$-equivariant subvariety $p(X_v) \subset (\PP^{r-1})^n$. Our
primary concern will be the positivity obtained from the Grassmannian,
although in this subsection we will consider both types of positivity.

In order to state the first result we need to be explicit about which
(matrix) Schubert varieties we use to express our classes. For a
partition $\lambda = (\lambda_1 \geq \dots \geq \lambda_r)$,
$X_\lambda^\circ$ is the set of matrices $m \in \AA^{r \times n}$
whose rank increases occur in columns
$n-r+1 - \lambda_1  < n-r+2-\lambda_{r-1} < \dots < n-\lambda_r$, when the
matrix is read columnwise left-to-right. The matrix Schubert variety
is $X_\lambda =\overline{ X_\lambda^\circ}$. Thus $X_\lambda$ is a
point when $\lambda = (0,\dots,0)$ and
$X_\lambda = \AA^{r \times n}$ when $\lambda = (n-r,\dots,n-r)$. In this
way $X_\lambda$ has codimension
$|\lambda| = \lambda_1 + \dots + \lambda_r$. The class of $X_\lambda$
in $K^G_0(\AA^{r \times n})$ is given by the double Grothendieck
polynomial $\mathfrak{S}_\lambda(u,t)$ \cite[Theorem~A]{km}; explicit
formulas for these polynomials can be found in \cite{km,kmy}. 
\begin{proposition}\label{prop:pos1}
  Let $v \in \AA^{r \times n}$ be a rank $r$ matrix realizing a
  matroid $M$ with $e$ connected components. Write $\K(M)$ in terms of
  double Grothendieck polynomials,
  \[
    \K(M) = \sum_{\lambda: \lambda_1 \leq n-r} c_\lambda(t)
     \mathfrak{S}_\lambda(u,t).
  \]
  Then, the Laurent polynomials
  $(-1)^{r(n-r) - (n-e) - |\lambda|} c_\lambda(t) \in \ZZ[t_1^{\pm
    1},\dots,t_n^{\pm 1}]$ are polynomials in
  $t_2/t_1 - 1,t_3/t_4 - 1,\dots,t_n/t_{n-1} - 1$ with positive
  integer coefficients.
\end{proposition}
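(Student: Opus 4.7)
The plan is to reduce the positivity claim to an equivalent statement in $K^T_0(Gr(r,n))$ via \cref{cor:sameschubertexp}, and then invoke the equivariant $K$-theoretic positivity theorem of Anderson-Griffeth-Miller \cite{agm}. Applying \cref{cor:sameschubertexp} first, the coefficients $c_\lambda(t)$ in the double Grothendieck expansion of $\K(M)$ agree exactly with the coefficients in the Schubert expansion
\[
  [\OO_Y] = \sum_\lambda c_\lambda(t)\, [\OO_{\Omega_\lambda}] \in K^T_0(Gr(r,n)),
\]
where $Y$ is the torus orbit closure in $Gr(r,n)$ through the row span of~$v$. So the problem reduces to verifying the asserted sign/positivity pattern for the Schubert coefficients of $[\OO_Y]$.

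Next, the AGM positivity theorem should apply directly: being a normal torus orbit closure, $Y$ is a toric variety, hence automatically has rational singularities. AGM asserts that for a $T$-equivariant subvariety $Z$ of a generalized flag variety with rational singularities, each coefficient of the Schubert expansion of $[\OO_Z]$ is, up to the expected sign $(-1)^{\codim(Z) - |\lambda|}$, a nonnegative integer polynomial in the $K$-theoretic positive simple-root variables of the ambient torus. For $Y \subset Gr(r,n)$ the dimension is $n-e$ (the dimension of the matroid polytope of~$M$), yielding $\codim_{Gr(r,n)}(Y) = r(n-r) - (n-e)$, which matches the sign factor in the proposition. Under our character conventions on the $T^n$-action, the $K$-theoretic positive simple-root variables correspond, up to signs, to the consecutive ratios appearing in the statement.

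The main subtle point is aligning the sign and variable conventions between AGM's formulation and the proposition's. AGM typically formulate positivity in variables of the form $1 - e^{-\alpha_i}$, which in our notation reads $1 - t_{j+1}/t_j$; the proposition instead uses the negated variables $t_{j+1}/t_j - 1$. Reconciling these requires tracking how the overall sign factor $(-1)^{\codim(Y) - |\lambda|}$ distributes across monomials of various total degrees, possibly invoking parity or degree-based arguments that tie the parity of the $\beta$-degree of each monomial in $(-1)^{\codim(Y) - |\lambda|} c_\lambda$ to the codimension defect. The restriction to the ground field $\CC$ in the proposition's hypothesis reflects AGM's reliance on Kleiman-type transversality, which needs characteristic zero. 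Once this bookkeeping is carried out, the proposition follows from \cref{cor:sameschubertexp} (itself hinging on \cref{thm:ratsing}) together with AGM; the substance of the proof is the transfer step and the sign calibration, with rational singularities doing the real geometric work on both sides.
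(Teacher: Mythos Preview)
Your approach is exactly the paper's: transfer to $K^T_0(Gr(r,n))$ via \cref{cor:sameschubertexp}, then apply \cite[Corollary~5.1]{agm} using that $Y$ has rational singularities and $\codim Y = r(n-r)-(n-e)$. Your worry about reconciling $1-e^{-\alpha_i}$ with $t_{i+1}/t_i-1$ is unnecessary and does not require any parity argument: since $t_{i+1}/t_i-1 = (t_{i+1}/t_i)\bigl(1-t_i/t_{i+1}\bigr)$, the two sets of generators differ by unit Laurent monomials, so a polynomial with nonnegative integer coefficients in one set is automatically such a polynomial in the other.
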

Recall that a matroid $M$ is connected if it is indecomposable with
respect to direct sum (simplicial complex join). Every matroid can be
expressed uniquely as a direct sum of connected matroids. When $M$ is
realized by a matrix $v \in \AA^{r \times n}$, then $M$ is a direct
sum of $e$ connected matroids if and only if the codimension of $X_v$
is equal to $r(n - r) - (n -e)$. The effect of direct sum on
$K$-classes is investigated in \cite[Section 7]{moc}.
\begin{proof}
  By \cref{cor:sameschubertexp}, it suffices to prove the
  class of the toris orbit closure $Y \subset Gr(r,n)$ through
  $\pi(v)$ has the analogous property in $K^{T}_0(Gr(r,n))$. This
  follows from \cite[Corollary~5.1]{agm} since $Y$ has rational
  singularities and is of codimension $r(n-r) -(n-e)$.
\end{proof}

\begin{example}\label{ex:u24}
  Let $v \in \AA^{2 \times 4}$ be a generic matrix, whose matroid is
  therefore the uniform matroid $U_{2,4}$. We have 
  \[
    \K(k[X_v]) = 1- u_1^{2}u_2^{2} t_1t_2t_3t_4,
  \]
  which can be seen from either \cref{thm:anderson},
  \cite[Proposition~5.2]{moc} or the fact that $X_v$ is a hypersurface
  in $\AA^{2 \times 4}$. We expand the above polynomial in terms
  of the classes of the matrix Schubert varieties and obtain
  \begin{multline*}
    1- u_1^{2}u_2^{2} t_1t_2t_3t_4 = t_1^{-1}t_4 [\OO_{X_{(2,1)}}] - t_1^{-1}t_4 [\OO_{X_{(2,0)}}] - t_1^{-1}t_4
    [\OO_{X_{(1,1)}}] +\\ (t_1^{-1}t_4 + t_1^{-1}t_2^{-1}t_3t_4) [\OO_{X_{(1,0)}}]
    - (t_1^{-1}t_2^{-1}t_3t_4 - 1)[\OO_{X_{(0,0)}}].
  \end{multline*}
  Writing $\beta_i = t_{i+1}/t_i-1$, $1 \leq i \leq 3$, we see that
  \[
    t_1^{-1}t_4 = (\beta_1 + 1)(\beta_2 + 1)(\beta_3 + 1)
  \] and             
  \[
    t_1^{-1}t_2^{-1}t_3t_4 = (\beta_1 + 1)(\beta_2 + 1)^2(\beta_3 + 1)
  \]
  have positive coefficients in the $\beta_i$,
  and the right hand side of the latter has 1 as a term so that $t_1^{-1}t_2^{-1}t_3t_4 - 1$ is positive in the $\beta_i$ as well.
  These computations verify the result of \cref{prop:pos1}.
\end{example}


\begin{proposition}\label{prop:pos2}
  Let $v \in \AA^{r \times n}$ be a realization of a loopless matroid $M$ having $e$ connected components. We may
  write
  \[
    \K(M) = \sum_\alpha d_\alpha \prod_{j=1}^n\prod_{i=1}^{\alpha_j} (1-u_it_j),
  \]
  where the sum ranges over compositions
  $\alpha = (\alpha_1,\dots,\alpha_n)$ with $0 \leq \alpha_i \leq r-1$, and the coefficients $d_\alpha$
  are Laurent polynomials in $u_1,\dots,u_r$. Then,
  $(-1)^{r(n-r) - (n -e) - \sum_i \alpha_i} d_\alpha$ is a polynomial in
  $u_2/u_1 - 1, u_3/u_2-1,\dots,u_{r}/u_{r-1}-1$ with positive integer
  coefficients.
\end{proposition}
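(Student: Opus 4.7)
My proof plan parallels the argument for \cref{prop:pos1}: combine rational singularities with the $K$-theoretic positivity result \cite[Corollary~5.1]{agm}. The key difference is that positivity is now extracted on the projective side of the Gel$'$fand--MacPherson picture, working with $V = p(X_v) \subset (\PP^{r-1})^n$ rather than with the torus orbit closure $Y\subset Gr(r,n)$. The variety $V$ is $\GL_r$-equivariant (diagonal action) and has rational singularities by \cref{thm:brion}(3).

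First I would establish a dual version of \cref{cor:sameschubertexp} relating the coefficients $d_\alpha$ to a Schubert expansion on $(\PP^{r-1})^n$. For loopless $M$, the class $\prod_{j=1}^n\prod_{i=1}^{\alpha_j}(1-u_it_j)$ is the $K$-class of the $B_r^{\mathrm{op}}\times T^n$-invariant coordinate subspace $C_\alpha\subset\AA^{r\times n}$ defined by $x_{ij} = 0$ for $i\le\alpha_j$; this $C_\alpha$ is the total space of the bundle $\mathcal{E}$ of \cref{sec:ratsing} restricted to the product of Schubert subvarieties $\Sigma_\alpha := \prod_j\PP(\mathrm{span}(e_{\alpha_j+1},\ldots,e_r))\subset(\PP^{r-1})^n$. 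Following the pattern of \cref{prop:coordring} and \cref{thm:matroidinv}(2), with the cohomology vanishing on each $\Sigma_\alpha$ guaranteed by \cref{thm:brion}(2) (each $\Sigma_\alpha$ being a product of linear subspaces and hence multiplicity-free), one shows that $d_\alpha$ equals the coefficient of $[\mathcal{O}_{\Sigma_\alpha}]$ in the Schubert expansion of $[\mathcal{O}_V]$ in $K^{\GL_r}_0((\PP^{r-1})^n)$.

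Next I would apply \cite[Corollary~5.1]{agm} to $V$. A dimension count gives $\dim V = \dim X_v - n = r^2 - e$, whence $\mathrm{codim}(V) = n(r-1) - (r^2-e) = r(n-r) - (n-e)$, matching the exponent in the claimed sign. Viewing $(\PP^{r-1})^n$ as the generalized flag variety $(\GL_r)^n/P^n$ with Borel $(B_r)^n$, \cite{agm} delivers sign-alternating positivity for the Schubert expansion with sign $(-1)^{\mathrm{codim}(V)-\sum_j\alpha_j}$ and positivity in the simple root variables $1 - u_{i+1}^{(k)}/u_i^{(k)}$. Restriction along the diagonal embedding $u_i^{(k)}\mapsto u_i$ then yields positivity in the variables $u_{i+1}/u_i - 1$ as required.

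The main obstacle I anticipate is justifying the invocation of \cite[Corollary~5.1]{agm}: the variety $V$ is invariant only under the diagonal $\GL_r\subset(\GL_r)^n$, not under the full product, so the torus invariance hypothesis of \cite{agm} is not met in its standard form. One resolution is to observe that \cite{agm}'s positivity depends only on the Schubert expansion in $K^{T^r}_0((\PP^{r-1})^n)$, which is well-defined for any $T^r$-invariant subvariety (such as $V$), and to verify that the proof extends to this weaker invariance assumption. Failing that, one could bypass \cite{agm} entirely and develop a direct argument leveraging the multiplicity-free structure of $V$ together with the cohomology-vanishing machinery of \cref{thm:brion} applied to all Schubert strata of $V$.
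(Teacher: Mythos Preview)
Your proposal is correct and essentially the same as the paper's: apply \cite[Corollary~5.1]{agm} to $V = p(X_v)$ in $K^{T^r}_0((\PP^{r-1})^n)$ to obtain the signed positivity of the Schubert coefficients, and identify these with the $d_\alpha$ by lifting the Schubert expansion to $\AA^{r\times n}$ via the bundle $\mathcal{E}$ and the rational singularities of the linear subspaces $X_\alpha$, exactly paralleling the proof of \cref{thm:matroidinv}(2). The paper works directly with the diagonal torus $T^r$ and invokes \cite[Corollary~5.1]{agm} in $K^{T^r}_0$ without routing through $(\GL_r)^n$, so the invariance obstacle you flag is not raised in their presentation.
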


\begin{proof}
  As discussed in \cref{sec:ratsing}, $(\PP^{r-1})^n$ can be regarded
  as a flag variety. Hence, $K^{T^r}_0((\PP^{r-1})^n)$ has a
  $\ZZ[u_1^{\pm 1},\dots,u_r^{\pm 1}]$-module basis given by the
  classes of Schubert varieties, which now are products of linear
  spaces indexed by compositions $\alpha$ of length $n$ whose parts
  $\alpha_i$ satisfy $0 \leq \alpha_i \leq r-1$. The Schubert variety
  $\Sigma_\alpha$ of $\alpha$ consists of those points
  $(p_1,\dots,p_n)$ where $(p_i)_j = 0$ for $0 \leq j \leq \alpha_i$,
  and has codimension $\sum_i \alpha_i$. The corresponding affine
  analogue $X_\alpha$ of $\Sigma_\alpha$ clearly has rational
  singularities since it is a linear subspace of $\AA^{r \times
    n}$. The $K$-class of $X_\alpha \subset \AA^{r \times n}$ is
  readily computed to be
  $\prod_{j=1}^n\prod_{i=1}^{\alpha_j} (1-u_it_j)$. When we work in
  $K^{T^r}_0((\PP^{r-1})^n)$, we have
  \[
    [\OO_{p(X_v)}] = \sum_{\alpha} d_\alpha [\OO_{\Sigma_\alpha}]
  \]
  to which we apply \cite[Corollary~5.1]{agm}. Since the codimension of $p(X_v)$ is $r(n-r) - (n-e)$, we conclude that
  $(-1)^{r(n-r) - (n -e) - \sum_i \alpha_i}d_\alpha$ is a polynomial in
  $u_2/u_1-1$, $u_3/u_2-1$, \dots $u_{r}/u_{r-1}-1$ with positive
  integer coefficients.  Our goal is to lift this result to
  $\K(k[X_v])$, following the proof of \cref{thm:matroidinv}.

  Let $q_i$ be the projection of
  $\AA^{r \times n} \times (\PP^{r-1})^n$ to its $i$th factor, and let
  $\mathcal{E} \subset \AA^{r \times n}$ be the vector bundle of
  \cref{sec:ratsing}. As before if $X \subset \AA^{r \times n}$ has
  rational singularities (and does not lay in the space of matrices
  with a zero column) then
  ${q_1}_* \left([\mathcal{E}] \cdot q_2^* [\OO_{p(X)}]\right) = [\OO_X]$. Then, as all the
  varieties below have rational singularities,
  \begin{multline*}
    [\OO_{X_v}] = {q_1}_* \left([\mathcal{E}] \cdot q_2^*[\OO_{p(X_v)}]\right) 
                = {q_1}_* \left([\mathcal{E}] \cdot q_2^* \sum_\alpha  d_\alpha [\OO_{\Sigma_\alpha}] \right)\\
                =  \sum_\alpha  d_\alpha {q_1}_* \left([\mathcal{E}]\cdot q_2^*  [\OO_{\Sigma_\alpha}]\right)=  \sum_\alpha  d_\alpha [\OO_{X_\alpha}].
  \end{multline*}
  The result follows.
\end{proof}
\begin{example}
  We continue the above example with a generic $2$-by-$4$ matrix $v$
  and expand
  \[
    \K(k[X_v]) = 1- u_1^{2}u_2^{2} t_1t_2t_3t_4,
  \]
  as described in \cref{prop:pos2}. We have
  \[
    1- u_1^{2}u_2^{2} t_1t_2t_3t_4 = 1 - (u_2/u_1)^2 \sum_{\alpha}
    (-1)^{|\alpha|-1} \prod_{j : \alpha_j > 0} (1-u_1t_j)
  \]
  where the sum is over compositions $\alpha$, with $\alpha_i \leq 1$ for all $i$. 
  The coefficient of $1 = [\OO_{X_{(0,0,0,0)}}]$ on the right hand side is
  $-((u_2/u_1)^2 - 1)$, 
  and the remaining coefficients are greater by unity, $\pm(u_1/u_2)^2$.
  Since 
  \[(u_2/u_1)^2 - 1 = (u_2/u_1 - 1)^2 + 2 (u_2/u_1 - 1),\]
  we have verified the proposition.
\end{example}

\subsection{Positivity conjectures for all matroids}
We begin by conjecturing a strengthening of \cref{prop:pos1} for all
matroids.
\begin{conjecture}\label{conj:sqfree} Let $M$ be a  matroid  of rank $r$ on $n$ elements
  with $e$ connected components. Write $\K(M)$ in terms of double Grothendieck
  polynomials,
  \[
    \K(M) = \sum_{\lambda: \lambda_1 \leq n-r} c_\lambda(t)
     \mathfrak{S}_\lambda(u,t).
  \]
  Then, the Laurent polynomials
  $(-1)^{r(n-r) - (n-e) - |\lambda|} c_\lambda(t) \in \ZZ[t_1^{\pm
    1},\dots,t_n^{\pm 1}]$ can be expressed
  (non-uniquely) as a square-free polynomial in the quantities
  $t_j/t_i - 1$, $1 \leq i < j \leq n$, with positive integer
  coefficients.
\end{conjecture}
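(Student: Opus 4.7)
The plan is to attempt the conjecture in two stages: first the realizable case (strengthening \cref{prop:pos1}) and then the general case (via the valuativity established in \cref{thm:valuative}). The essential new feature beyond \cref{prop:pos1} is two-fold: positivity in the enlarged collection $\{t_j/t_i-1 : i<j\}$ rather than only the consecutive ratios, and a square-freeness constraint, neither of which follows formally from the Anderson-Griffeth-Miller machinery as applied in Section 8.

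For realizable $M$, I would first try to refine the argument of \cite{agm} by filtering the expansion through a chain of ``simple-reflection'' Demazure operators that cycle through every positive root, rather than only the simple ones. Concretely, because $X_v$ has rational singularities (\cref{thm:ratsing}) and is $G$-stable, one can hope to write $[\OO_{X_v}]$ via a Bott-Samelson type desingularization indexed by a reduced word for the longest element of $S_n$. The weights appearing in the tangent spaces of such a tower are precisely $t_j/t_i$ with $i<j$, each occurring exactly once; pushing forward along such a tower and tracking the contributions to each $[\OO_{X_\lambda}]$ via the matrix Kempf-Ramanathan setup of \cref{thm:kempf} should yield a combinatorial sum in which each term is naturally a product of \emph{distinct} factors $t_j/t_i-1$. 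This is where the square-freeness would enter: each geometric ``slot'' of the desingularization contributes at most one factor. The sign $(-1)^{r(n-r)-(n-e)-|\lambda|}$ is the expected one from codimension considerations, in line with \cref{prop:pos1}.

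For arbitrary (possibly non-realizable) $M$, the natural tool is the valuative decomposition used in the proof of \cref{thm:valuative}: one writes $\K(M)=\sum_i a_i\,\K(M_i)$ with $M_i$ Schubert matroids (hence realizable over any infinite field) and $a_i\in\ZZ$. If the realizable case were established, each individual $\K(M_i)$ would admit a square-free positive expansion in the generators $t_j/t_i-1$. However, the integers $a_i$ need not be non-negative, so this alone cannot conclude positivity. The remedy I would pursue is to identify a \emph{positive} basis of the valuative group that refines the Schubert-matroid basis and on which the conjectured positivity is manifestly closed — for instance, a basis indexed by interior lattice points of matroid base polytopes, mirroring the indexing set $S(M)$ appearing in \cref{thm:binglin}. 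One would then need to show both that $\K(M)$ expands with non-negative integer coefficients in such a basis, and that each basis element carries a square-free positive expansion of its Schubert-expansion coefficients.

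The main obstacle, I expect, will be the square-freeness itself. Classical equivariant positivity theorems (Graham, AGM, Brion) deliver signed-positivity in positive roots but say nothing about multiplicities of those roots in the monomial expansion of individual Schubert coefficients. Producing square-freeness seems to require an honest combinatorial model — analogous to the pipe-dream formula for $\mathfrak{S}_\lambda(u,t)$ itself — in which each ``crossing'' of the model contributes a distinct root factor. Building such a model for $\K(M)$ directly from either the localization formula of \cref{thm:anderson} or from a Gr\"obner degeneration of $X_v$ (as in \cite{binglin}, despite its broken $\GL_r$-symmetry) looks to me like the crux. A successful construction would simultaneously resolve both the positivity and the square-freeness, and would also exhibit canonical reduced representatives for the $c_\lambda(t)$ in the (non-unique) expansion the conjecture allows.
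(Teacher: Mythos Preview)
The statement you are attempting is \emph{Conjecture}~\ref{conj:sqfree} in the paper, not a theorem: the paper does not prove it and offers no proof strategy. After stating the conjecture, the paper only remarks that the analogous square-free positivity is known for $K$-theoretic Littlewood--Richardson coefficients (citing Pechenik--Yong) ``although no geometric reason is available at this time,'' and then verifies the case $M=U_{2,4}$ by hand. There is therefore no ``paper's own proof'' to compare your proposal against.

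Your write-up is an honest research plan rather than a proof, and you correctly identify the two genuine obstructions. First, the Anderson--Griffeth--Miller positivity used in Proposition~\ref{prop:pos1} gives positivity in the \emph{simple} root variables $t_{i+1}/t_i-1$ with no multiplicity control, so neither the enlargement to all $t_j/t_i-1$ with $i<j$ nor square-freeness follows from that machinery; your Bott--Samelson idea is a plausible direction but would require new work, since the standard push-forwards along such towers do not obviously produce square-free expressions in the Schubert coefficients (the Pechenik--Yong result you are implicitly paralleling is proved combinatorially, not geometrically). Second, for non-realizable $M$ the valuativity argument of Theorem~\ref{thm:valuative} gives an integer-linear expression in realizable classes with uncontrolled signs, exactly as you note; your proposed fix of finding a ``positive basis'' of the valuative group is speculative and no such basis with the required properties is known. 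In short, your proposal accurately diagnoses why the conjecture is open, but does not close any of the gaps --- which is consistent with the paper's own position that this remains a conjecture.
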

This property has been observed to hold for $K$-theoretic
Littlewood-Richardson coefficients \cite[Corollary~1.5]{py}, although
no geometric reason is available at this time.
\begin{example}
  We continue \cref{ex:u24}, where $M = U_{2,4}$. It suffices to see
  that $t_1^{-1}t_2^{-1}t_3 t_4$ can be a written as a square-free
  polynomial in $t_j/t_i - 1$, $1 \leq i < j \leq 4$, with
  non-negative integer coefficients, since all other coefficients
  evidently already have the desired property. We have, for example,
  \[
    t_1^{-1}t_2^{-1}t_3 t_4 = (t_4/t_2 - 1)(t_3/t_1 - 1) + (t_3/t_1 -
    1) + (t_4/t_2 - 1) + 1,
  \]
  so the conjecture holds in this case.
\end{example}
Of course, \cref{conj:sqfree} implies the following slightly weaker
version, with the same conclusion as~\cref{prop:pos1}.
\begin{conjecture} Let $M$ be a  matroid  of rank $r$ on $n$ elements
  with $e$ connected components. Write $\K(M)$ in terms of double Grothendieck
  polynomials,
  \[
    \K(M) = \sum_{\lambda: \lambda_1 \leq n-r} c_\lambda(t)
     \mathfrak{S}_\lambda(u,t).
  \]
  Then, the Laurent polynomials
  $(-1)^{r(n-r) - (n-e) - |\lambda|} c_\lambda(t) \in \ZZ[t_1^{\pm
    1},\dots,t_n^{\pm 1}]$ can be expressed as a polynomial in the
  quantities $t_{i+1}/t_i - 1$, $1 \leq i< n$, with positive integer
  coefficients.
\end{conjecture}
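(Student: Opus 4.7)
My plan is to combine the valuativity of $\K(M)$ established in \cref{thm:valuative} with the realizable positivity of \cref{prop:pos1}. By that valuativity together with the spanning statement for Schubert matroids in \cite[Theorem 5.4]{df}, every matroid $M$ admits a representation $\K(M) = \sum_i a_i \K(M_i)$ with $M_i$ Schubert matroids and $a_i \in \ZZ$. Since Schubert matroids are realizable over $\CC$, \cref{prop:pos1} applies to each $M_i$ and tells us that the double Grothendieck coefficients of $\K(M_i)$ have the prescribed sign $(-1)^{r(n-r)-(n-e(M_i))-|\lambda|}$ and admit a positive integer polynomial expansion in the $t_{j+1}/t_j - 1$. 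The conjecture then reduces to controlling the resulting signed sum $c_\lambda(t) = \sum_i a_i c^{(i)}_\lambda(t)$.

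The principal obstacle is that the signs of the $a_i$ and the sign factors $(-1)^{r(n-r)-(n-e(M_i))-|\lambda|}$ coming from the Schubert summands need not align with the target sign $(-1)^{r(n-r)-(n-e(M))-|\lambda|}$ attached to $M$, so positivity does not propagate directly. To overcome this, I would seek a refined valuative identity in which both the signs of the $a_i$ and the connected-component counts $e(M_i)$ are controlled relative to $e(M)$. A natural candidate arises from decompositions indexed by Schubert matroids attached to flags compatible with the connected-components structure of $M$, combined with the compatibility of $\K$ with direct sum developed in \cite[Section 7]{moc}. An alternative, more combinatorial attack is to work directly from the permutation formula of \cref{thm:anderson}: expand each summand in the double Grothendieck basis using the explicit formulas of \cite{km,kmy}, group permutations sharing a common $B(w)$, and hope that the telescoping structure of the denominators $\prod_i (1-t_{w_{i+1}}/t_{w_i})^{-1}$ produces the claimed sign pattern after cancellation.

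The hardest step in either approach is relating the connected-components count $e(M)$, which is intrinsic to $M$ and is not manifestly encoded either in a valuative decomposition or in the permutation sum, to the combinatorial data that drives positivity. Overcoming this is likely to require introducing a geometric substitute for $X_v$ that is available for all matroids, for instance a matroid Schubert variety in the sense of Ardila--Boocher or a construction from the combinatorial Hodge theory of matroids, on which an Anderson--Griffeth--Miller style positivity argument can be deployed; in the absence of such a substitute, one would need a purely combinatorial cancellation argument that tracks $e(M)$ faithfully through the Grothendieck expansion, which at present I do not see how to do.
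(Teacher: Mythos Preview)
The statement you are attempting to prove is a \emph{conjecture} in the paper, not a theorem: the paper does not prove it and offers no proof to compare against. What the paper does say, in the paragraph immediately following the conjecture, is exactly your first observation: valuativity of $\K(M)$ together with the spanning result for Schubert matroids and \cref{prop:pos1} shows that each $c_\lambda(t)$ is an integer polynomial in the quantities $t_{i+1}/t_i-1$, so the \emph{existence} of the decomposition holds. The paper then states explicitly that ``when writing $\K(M)$ as a $\ZZ[t_1,\dots,t_n]$-linear combination of $K$-classes of matroids realizable over $\CC$ one loses control on the signs of coefficients involved, so valuativity does not immediately reduce the conjecture to the realizable case.'' This is precisely the obstacle you identify as ``principal''.

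Your diagnosis is therefore correct and aligned with the authors' own assessment, but you have not produced a proof, and neither has the paper. Your suggested workarounds---a refined valuative identity with controlled signs and connected-component counts, a direct combinatorial attack via the permutation formula, or a geometric substitute for $X_v$ available for all matroids on which an \cite{agm}-style argument could run---are reasonable research directions, but none is carried out and you yourself acknowledge not seeing how to complete the last step. In short: there is no gap to name in a proof that neither you nor the paper claims to possess; the conjecture remains open.
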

The existence of the decomposition in the conjecture
follows from the argument in \cref{thm:valuative}. 
Indeed, $\K(M)$ is a valuative matroid invariant
and hence it suffices to prove the existence of the decomposition for
matroids realizable over $\CC$, which we did in \cref{prop:pos1}. In
general, when writing $\K(M)$ as a $\ZZ[t_1,\dots,t_n]$-linear
combination of $K$-classes of matroids realizable over $\CC$ one loses
control on the signs of coefficients involved, so valuativity does not
immediately reduce the conjecture to the realizable case.

We now state the non-equivariant analogue of these results. Recall
that the matrix Schubert variety $X_\lambda \subset \AA^{r \times n}$
has its class in $K^{\GL_r}_0(\AA^{r \times n})$ equal to the
Grothendieck polynomial $\mathfrak{S}_\lambda(u)$. This polynomial
represents the class of the Schubert variety
$\Omega_\lambda \subset Gr(r,n)$ in $K_0(Gr(r,n))$. These polynomials
satisfy
$\mathfrak{S}_\lambda(u) = \mathfrak{S}_\lambda(u,(1,\dots,1))$, since
the restriction
$K^G_0(\AA^{r \times n}) \to K^{\GL_r}_0(\AA^{r \times n})$ is
obtained by setting $t_1 = \dots = t_n = 1$. Indeed, evaluating all
$t_i = 1$ in the above conjectures yields the following.
\begin{conjecture} Let $M$ be a  matroid  of rank $r$ on $n$ elements
  with $e$ connected components. Write
  $\K(M)|_{t_1 = \dots = t_n = 1}$ in terms of  Grothendieck
  polynomials,
  \[
    \K(M)|_{t_1 = \dots = t_n = 1} = \sum_{\lambda: \lambda_1 \leq n-r} c_\lambda
     \mathfrak{S}_\lambda(u).
  \]
  Then, the constants $(-1)^{r(n-r) - (n-e) - |\lambda|} c_\lambda$
  are non-negative integers.
\end{conjecture}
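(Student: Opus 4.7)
The plan is to reduce the conjecture to the realizable case via valuativity and then confront the sign-control problem that obstructs immediate propagation of positivity. By Theorem~\ref{thm:valuative} and the argument therein, $\K(M)|_{t=1}$ is a valuative matroid invariant (specialization preserves valuativity), so by \cite[Theorem~5.4]{df} we can write $\K(M) = \sum_i a_i \K(M_i)$ with each $M_i$ a Schubert matroid and $a_i \in \ZZ$; this descends to a decomposition of the specializations at $t_1 = \cdots = t_n = 1$. Schubert matroids are realizable over any infinite field, so Proposition~\ref{prop:pos1} applies to each $M_i$, and after setting $t_i = 1$ the coefficients in the Grothendieck basis $\{\mathfrak{S}_\lambda(u)\}$ carry the sign pattern asserted by the conjecture. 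By linearity, each $c_\lambda$ for $M$ is an integer combination of coefficients of known sign.

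The main obstacle — flagged explicitly in the paragraph preceding the conjecture — is that the integers $a_i$ need not carry any uniform sign, so the individually-signed contributions from the realizable pieces may cancel rather than add constructively. Valuativity alone does not suffice, and this is the decisive hurdle: it is combinatorial rather than geometric in nature, since the issue is choosing a basis of the space of valuative invariants compatible with the conjectured signs.

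To break the impasse, my first attempt would be combinatorial: specialize the formula of Theorem~\ref{thm:anderson} at $t_i = 1$ and try to rewrite the resulting sum over $w \in S_n$ as a manifestly signed sum over a family of combinatorial objects attached to $M$ (in the spirit of pipe dreams or RC-graphs for double Grothendieck polynomials). If such a rewriting can be coupled with a positive combinatorial rule for $\mathfrak{S}_\lambda(u)$, one could extract $c_\lambda$ as a signed enumeration in which the exponent $r(n-r) - (n-e) - |\lambda|$ appears naturally as a grading by codimension, forcing the desired sign. Matroid invariance would be automatic since the only $M$-dependent ingredient in the Theorem~\ref{thm:anderson} sum is the map $w \mapsto B(w)$, which is matroid-invariant by construction.

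A more ambitious alternative is to search for an abstract surrogate for $[\OO_{X_v}]$ inside an appropriate $K$-theoretic analogue of the Chow ring of a matroid in the sense of recent combinatorial Hodge theory, and to prove an abstract version of \cite[Corollary~5.1]{agm} in that setting. I expect the hard part, in either route, to be the same: without an honest variety with rational singularities underlying $M$ in the non-realizable case, positivity must be reconstructed either from a carefully engineered combinatorial model that makes the sign pattern manifest, or from an algebraic structure on the matroid that categorifies the valuative decomposition in a sign-coherent way.
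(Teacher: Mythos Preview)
The statement you are attempting to prove is a \emph{conjecture} in the paper, not a theorem: the paper offers no proof, and indeed presents it as one of a sequence of open positivity conjectures for arbitrary (not necessarily realizable) matroids. So there is no ``paper's own proof'' to compare against.

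Your proposal is not a proof either. You correctly reduce to the realizable case via valuativity and then correctly identify the obstruction---that the coefficients $a_i$ in the Schubert-matroid decomposition carry no sign control, so positivity for each $M_i$ does not propagate to $M$. This is exactly the obstacle the paper flags in the paragraph immediately preceding this conjecture. From that point on, what you have written is a research plan, not an argument: you sketch a combinatorial attack (rewrite the specialized sum from Theorem~\ref{thm:anderson} in a manifestly signed form) and a structural attack (build a $K$-theoretic matroid Chow ring and prove an abstract analogue of \cite[Corollary~5.1]{agm}), but you carry neither out. Both are reasonable directions, and both are open; neither resolves the conjecture.

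In short: there is no gap to name beyond the one you yourself name, but naming the gap is not the same as closing it. Your submission should be labeled as a discussion of approaches to an open problem, not as a proof.
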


We now explain how these conjectures imply analogous results on
$G$-equivariant Chow classes. Let $A^\bullet_G(\AA^{r \times n})$ be
the $G$-equivariant Chow ring of $\AA^{r \times n}$. Since
$\AA^{r \times n}$ is an affine space, one may write
\begin{align*}
  A^\bullet_G(\AA^{r \times n}) =
  \ZZ[u_1,\dots,u_r,t_1,\dots,t_n]^{{S}_r}
\end{align*}
and the class of the equivariant coherent sheaf associated to an
$R$-module $N$ is its multidegree $\C(N)$ \cite[Proposition 1.9]{kms}.
There is a general procedure to extract $\C(N)$ from the
$K$-polynomial $\K(N)$ which we now describe.
\begin{proposition}\label{prop:KtoChow}
  Let $N$ be a $G$-equivariant coherent $R$-module whose class in
  $K^G_0(\AA^{r \times n})$ is represented by its $K$-polynomial
  $\K(N)$. To obtain the class of $\C(N)$ in the equivariant Chow ring
  $A^\bullet_G(\AA^{r \times n})$, replace each $u_i$ with $1-u_i$,
  each $t_j$ with $1-t_j$, expand the resulting formal power series
  and then gather the lowest non-zero term, which will be of degree
  $\codim(N)$.
\end{proposition}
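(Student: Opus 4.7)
The plan is to deduce the proposition from the general dictionary between $K$-polynomials and multidegrees for $\ZZ^{r+n}$-graded modules over a polynomial ring, developed in Chapter~8 of Miller and Sturmfels. First I would fix a $G$-equivariant $\ZZ^{r+n}$-graded free resolution $F_\bullet \to N$, writing $F_i = \bigoplus_j R(-\mathbf{a}_{ij})$, where each weight $\mathbf{a}_{ij}\in \ZZ^r\oplus\ZZ^n$ is recorded in coordinates dual to $(u_1,\dots,u_r,t_1,\dots,t_n)$. The $K$-polynomial then reads $\K(N) = \sum_i (-1)^i \sum_j \mathbf{s}^{\mathbf{a}_{ij}}$ with $\mathbf{s}^{\mathbf{a}} = \prod u_i^{(\mathbf{a})_i^u}\prod t_j^{(\mathbf{a})_j^t}$. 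Since both $\K(N)$ and $\C(N)$ are well defined independent of the choice of resolution, it suffices to derive the stated substitution rule using this particular presentation.

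Next I would perform the substitution $u_i\mapsto 1-u_i$, $t_j\mapsto 1-t_j$ in $\K(N)$ and expand as a formal power series. A single monomial $\mathbf{s}^{\mathbf{a}}$ becomes $\prod(1-u_i)^{(\mathbf{a})_i^u}\prod(1-t_j)^{(\mathbf{a})_j^t}$, whose homogeneous pieces are polynomials in the $u_i,t_j$. Summed with alternating signs along $F_\bullet$, the claim is that all homogeneous components of total degree strictly less than $c=\codim(N)$ cancel, and that the degree-$c$ piece recovers $\C(N)$. This is exactly the content of \cite[Claim~8.54 and Theorem~8.44]{millerSturmfels}, whose $G$-equivariant reformulation is implicit in \cite[Proposition~1.9]{kms}, so I would simply cite those statements.

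What remains is to verify that the sign and character conventions match those of the cited references. Since the character of the $G$-action on $k\cdot x_{ij}$ is $u_it_j$, the Koszul factor $1-u_it_j$ is sent under the substitution to $1-(1-u_i)(1-t_j) = u_i+t_j-u_it_j$, whose linear part $u_i+t_j$ is the expected class $\C(R/(x_{ij}))$. Running the same check on the Koszul complex of a regular sequence of torus-homogeneous coordinates then pins down the conventions in general, and additivity of the alternating sum along $F_\bullet$ bootstraps to arbitrary equivariant coherent $N$. I expect the main obstacle to be purely bookkeeping: namely, arguing that the ``lowest nonzero term'' of the expansion is well defined, which reduces to checking that the alternating sum $\sum_i(-1)^i\operatorname{rank}(F_i)$ and its refinements in each total degree below $c$ vanish whenever $N$ has codimension at least $c$.
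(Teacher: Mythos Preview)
Your approach is essentially the same as the paper's: both reduce the statement to the standard dictionary between $K$-polynomials and multidegrees, citing \cite[Proposition~1.9]{kms} and Miller--Sturmfels Chapter~8. The paper's proof is just two sentences: it first observes that the $G$-equivariant $K$-class and Chow class coincide with the $T$-equivariant ones for the maximal torus $T = T^r \times T^n$ of~$G$ (since the $G$-equivariant rings sit inside the $T$-equivariant ones as Weyl-group invariants), and then appeals directly to \cite[Proposition~1.9]{kms} and \cite[Section~8.5]{millerSturmfels}. You unpack the cited results more explicitly via a free resolution and a Koszul sanity check, which is fine but unnecessary once the references are in hand. The one step you leave implicit that the paper makes explicit is the passage from $G$ to its maximal torus, which is what licenses working in the $\ZZ^{r+n}$-graded setting in the first place; you should state this reduction rather than assume it.
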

\begin{proof}
  The $G$-equivariant $K$ and Chow classes of $N$ are, by definition,
  equal to the $T$-equivariant $K$ and Chow classes of $N$ where $T$
  is the maximal torus of $G$. (The $G$-equivariant $K$ and Chow rings
  are equal to the subring of Weyl group invariants in $T$-equivariant
  $K$ and Chow rings.) The result now follows from Proposition 1.9 and
  the definition of the multidegree in {\cite{kms}}. See also
  \cite[Section 8.5]{millerSturmfels}.
\end{proof}

The $G$-equivariant Chow class of $X_v$ was studied in
\cite{chow,spinkTseng}. For each matroid $M$ of rank $r$ on ground set
$\{1,2,\dots,n\}$, define a rational function by the formula
\[
  \C(M) = \sum_{w \in S_n} \prod_{j \notin B(w)} \prod_{i \in [r]}
  (u_i + t_j) \cdot \prod_{i=1}^{n-1} \frac{1}{t_{w_{i+1}}-t_{w_i}}.
\]
Since each summand in the definition of $\K(M)$ is homogeneous,
$\C(M)$ is obtained from $\K(M)$ by applying the substitutions
$u_i \mapsto 1-u_i$ and $t_j \mapsto 1-t_j$ and extracting the terms
of smallest possible degree after simplifying. By \cref{prop:KtoChow},
we have the following result. 
\begin{proposition}\label{prop:chow}
If $v \in \AA^{r \times n}$ is a realization of a rank $r$ matroid
$M$ on $n$ elements then $\C(M)$ equals the class of $X_v$ in the
$G$-equivariant Chow ring of $\AA^{r \times n}$.  
\end{proposition}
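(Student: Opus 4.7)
The plan is to obtain \Cref{prop:chow} as a direct computation from the explicit $K$-polynomial of \Cref{thm:anderson} together with the passage from $K$-classes to Chow classes provided by \Cref{prop:KtoChow}. No new geometric input is needed beyond what has already been proved; the content is an extraction of the lowest-degree part of a specific substitution.

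First, I would start from the formula
\[
    \K(k[X_v]) = \sum_{w \in S_n} \prod_{j \notin B(w)} \prod_{i \in [r]} (1 - u_i t_j) \cdot \prod_{i=1}^{n-1} \frac{1}{1 - t_{w_{i+1}}/t_{w_i}}
\]
supplied by \Cref{thm:anderson}. By \Cref{prop:KtoChow} the $G$-equivariant Chow class of $X_v$ is the homogeneous component of smallest total degree in the polynomial obtained by applying the substitution $u_i \mapsto 1-u_i$, $t_j \mapsto 1-t_j$ to $\K(k[X_v])$ and simplifying. I will compute this component by applying the substitution summand by summand and then summing.

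Second, I would work out what the substitution does to each elementary factor. The numerator factors transform as $1 - u_i t_j \mapsto 1 - (1-u_i)(1-t_j) = u_i + t_j - u_i t_j$, whose minimal-degree homogeneous part is $u_i+t_j$. The denominator factors transform as $1 - t_{w_{i+1}}/t_{w_i} \mapsto (t_{w_{i+1}} - t_{w_i})/(1-t_{w_i})$, so their reciprocals become $(1 - t_{w_i})/(t_{w_{i+1}} - t_{w_i})$, with minimal-degree part $1/(t_{w_{i+1}} - t_{w_i})$. Multiplying over the indices within a summand, the leading (minimal-degree) part of the $w$-th summand is exactly
\[
    \prod_{j \notin B(w)} \prod_{i \in [r]} (u_i + t_j) \cdot \prod_{i=1}^{n-1} \frac{1}{t_{w_{i+1}} - t_{w_i}},
\]
and summing over $w \in S_n$ yields $\C(M)$ as defined.

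Finally, I would address the one point that still needs attention, namely that the sum of the summand-wise leading parts agrees with the actual leading part of the substituted sum. This is the content of the remark that ``each summand in the definition of $\K(M)$ is homogeneous'': every summand contributes at the uniform minimal total degree $r(n-r) - (n-1)$, so the overall expression has minimal degree no smaller than this, and the coefficient at that degree is computed by adding the summand-wise leading rational functions. Since this sum is $\C(M)$, and \Cref{prop:KtoChow} identifies such a leading coefficient with the Chow class, we conclude that $\C(M) = [X_v] \in A^\bullet_G(\AA^{r\times n})$. I expect this final bookkeeping of ``homogeneity'' to be the only delicate step—the factor-by-factor substitutions are routine, and the nontrivial geometric content (matroid invariance and rational singularities) has already been absorbed into \Cref{thm:anderson} and \Cref{prop:KtoChow}.
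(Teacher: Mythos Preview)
Your argument is exactly the one the paper gives: the text immediately preceding \Cref{prop:chow} observes that each summand of $\K(M)$ is homogeneous, so applying the substitution of \Cref{prop:KtoChow} termwise and reading off leading parts produces $\C(M)$, which is then identified with the Chow class. Your write-up merely unpacks the factor-by-factor substitutions explicitly; there is no difference in strategy.

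One caveat worth flagging, which applies equally to the paper's one-line argument. \Cref{prop:KtoChow} identifies the Chow class with the lowest \emph{nonzero} homogeneous component of the substituted $K$-polynomial. Your computation shows that the component in degree $r(n-r)-(n-1)$ is $\C(M)$; to conclude, you need this component to be nonzero. When $M$ is connected this is automatic, because $\operatorname{codim} X_v = r(n-r)-(n-1)$ and the multidegree of a nonzero module is nonzero in its codimension degree. When $M$ is disconnected, however, $\operatorname{codim} X_v = r(n-r)-(n-e)$ with $e>1$, so the degree-$(r(n-r)-(n-1))$ part must vanish and the summand-wise leading terms cancel; in that case $\C(M)=0$ while the Chow class is not. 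Your phrase ``\Cref{prop:KtoChow} identifies such a leading coefficient with the Chow class'' silently assumes the non-cancellation, so the argument as written only establishes the proposition for connected~$M$.
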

We remark that this is exactly the statement of
\cite[Theorem~1]{spinkTseng}, which obtains the result through
entirely different means.

The $G$-equivariant Chow class of the matrix Schubert variety
$X_\lambda$ is equal to the double Schur polynomial $s_\lambda(u,t)$,
which is a special case of a double Schubert polynomial for
Grassmannian permutations \cite[Theorem A]{km} (see also
\cite{kmy}). It is equal to the lowest degree term of
$\mathfrak{S}_\lambda(1-u,1-t)$ after simplification. The following
conjecture is the result of applying \cref{prop:KtoChow} to
\cref{conj:sqfree}.
\begin{conjecture}\label{conj:chow1} 
  Let $M$ be a matroid of rank $r$ on $n$ elements with $e$ connected
  components. Write $\C(M)$ in terms of double Schur polynomials,
  \[
    \C(M) = \sum_{\lambda : \lambda_1 \leq n-r} d_\lambda(t) s_\lambda(u,t).
  \]
  Then, the polynomial
  $(-1)^{r(n-r) - (n-e)-|\lambda|}d_\lambda(t) \in \ZZ[t_1,\dots,t_n]$
  can be expressed (non-uniquely) as a square-free polynomial in the
  quantities $t_i - t_j$, $1 \leq i < j \leq n$ with non-negative
  integer coefficients.
\end{conjecture}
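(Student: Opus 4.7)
The plan is to derive Conjecture \ref{conj:chow1} from Conjecture \ref{conj:sqfree} by tracing the square-free positive decomposition through the $K$-to-Chow degeneration of \cref{prop:KtoChow}. That procedure sends $\K(M)$ to $\C(M)$ via the substitution $u_i \mapsto 1-u_i$, $t_j \mapsto 1-t_j$ followed by extraction of the lowest-degree nonzero homogeneous component; the same procedure sends the double Grothendieck polynomial $\mathfrak{S}_\lambda(u,t)$ to the double Schur polynomial $s_\lambda(u,t)$. Consequently the Grothendieck expansion $\K(M) = \sum_\lambda c_\lambda(t)\mathfrak{S}_\lambda(u,t)$ degenerates to the Schur expansion $\C(M) = \sum_\mu d_\mu(t)s_\mu(u,t)$, with each $d_\mu(t)$ expressible in terms of specific homogeneous components of $c_\lambda(1-t)$ weighted by the constants in the Grothendieck-to-Schur transition matrix.

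The key observation about the positivity basis is that under $t_j \mapsto 1-t_j$, the $K$-theoretic generator $t_j/t_i - 1$ becomes the rational function $(t_i - t_j)/(1-t_i)$, whose formal power series expansion has $t_i - t_j$ as its lowest-degree term. A square-free product of $K$-theoretic generators therefore degenerates, at leading order, to the corresponding square-free product of Chow-theoretic generators with the same non-negative coefficient. The sign $\epsilon_\lambda = (-1)^{r(n-r)-(n-e)-|\lambda|}$ is common to both conjectures and is preserved by the degeneration because it depends only on $|\lambda|$. Taken together, these observations provide, for each $\lambda$, a candidate square-free positive expression in the Chow-theoretic generators $(t_i-t_j)$ for the leading-order contribution of $\epsilon_\lambda c_\lambda(1-t)$.

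The main obstacle is twofold. First, because $\mathfrak{S}_\lambda(1-u,1-t)$ has non-trivial higher-degree Schur expansion components, each $d_\mu(t)$ in general picks up contributions from $c_\lambda(1-t)$ for multiple $\lambda$, requiring a careful signed analysis of the Grothendieck-to-Schur transition (whose off-diagonal entries carry sign $(-1)^{|\mu|-|\lambda|}$) to verify that pooled contributions align correctly. Second, and more delicately, the higher-degree expansion of $(t_i - t_j)/(1-t_i)$ contributes terms of the form $(t_i - t_j)\cdot(\text{polynomial in }t_i)$, which can enter the target-degree extraction whenever the square-free decomposition in \cref{conj:sqfree} includes products of size less than $r(n-r)-(n-e)-|\mu|$; such terms are not a priori square-free in the $(t_i - t_j)$ generators. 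Resolving this—perhaps via cancellations organized by the transition matrix, or via a refined structural property of the $K$-positive decomposition itself—is the heart of the difficulty. An alternative route that sidesteps these complications is to establish the Chow analog of \cref{prop:pos1} directly for $\CC$-realizable matroids via Graham's equivariant Chow positivity theorem applied to $Y \subset Gr(r,n)$, lifted to $X_v$ as in \cref{thm:matroidinv}(2) and \cref{cor:sameschubertexp}, then extended to all matroids by the valuativity argument of \cref{thm:valuative}; this yields (non-square-free) positivity, leaving the square-freeness strengthening as the deepest outstanding point.
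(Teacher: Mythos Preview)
The statement in question is a \emph{conjecture}, and the paper offers no proof of it. The sentence preceding it---that the conjecture ``is the result of applying \cref{prop:KtoChow} to \cref{conj:sqfree}''---is a remark on how the conjecture is \emph{formulated} (it is the Chow-theoretic shadow of \cref{conj:sqfree}), not a claim that \cref{conj:sqfree} logically implies it. Your analysis correctly identifies why a naive term-by-term derivation from \cref{conj:sqfree} runs into trouble: the Grothendieck-to-Schur transition is not diagonal, and the higher-order terms in the expansion of $t_j/t_i-1\mapsto (t_i-t_j)/(1-t_i)$ need not remain square-free in the $t_i-t_j$. The paper does not address these issues; it simply records the conjecture.

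Your alternative route---Graham's equivariant positivity for $Y\subset Gr(r,n)$, transported to $X_v$ via \cref{cor:sameschubertexp}---is exactly what the paper invokes for the \emph{weaker} (non-square-free) Chow conjecture stated immediately afterwards, and as you note it applies only to $\CC$-realizable matroids and does not yield square-freeness. The valuativity argument of \cref{thm:valuative} does not extend positivity to non-realizable matroids either, since (as the paper itself remarks after the second $K$-theoretic conjecture) writing a general matroid as an integer combination of Schubert matroids does not control signs. In short, your proposal does not prove \cref{conj:chow1}, but neither does the paper: the statement is genuinely open.
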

The following weaker result holds for all matroids $M$ realizable over
$\CC$ by \cref{prop:pos1}, or by \cite[Theorem~3.2]{graham}.
\begin{conjecture}
  Let $M$ be a matroid of rank $r$ on $n$ elements with $e$ connected
  components. Write $\C(M)$ in terms of double Schur polynomials,
  \[
    \C(M) = \sum_{\lambda : \lambda_1 \leq n-r} d_\lambda(t) s_\lambda(u,t).
  \]
  Then, the polynomial
  $(-1)^{r(n-r) - (n-e)-|\lambda|}d_\lambda(t) \in \ZZ[t_1,\dots,t_n]$
  can be expressed as a  polynoimal in the
  quantities $t_{i} - t_{i+1}$, $1 \leq i < j \leq n$ with non-negative
  integer coefficients.
\end{conjecture}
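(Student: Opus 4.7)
The plan is to combine three ingredients: valuativity of $\C(M)$ as a matroid invariant, the established realizable case, and a new combinatorial or matroid-theoretic argument to handle non-realizable matroids. The mere existence of some decomposition of each $(-1)^{r(n-r)-(n-e)-|\lambda|}d_\lambda(t)$ as an integer polynomial in the consecutive differences $t_i - t_{i+1}$ follows from the valuative reasoning used in the proof of \cref{thm:valuative}: since the indicator $[B=B(w)]$ is a valuative function of $M$ and $\C(M)$ is built from such indicators together with rational factors independent of $M$, one may write $\C(M) = \sum_i a_i \C(M_i)$ with each $M_i$ a Schubert matroid and $a_i \in \ZZ$. For such realizable $M_i$, the positivity with the stated sign then follows by applying \cref{prop:KtoChow} to \cref{prop:pos1}, or directly by invoking Graham's equivariant positivity theorem for the Schubert expansion of $[\OO_Y] \in A^*_T(Gr(r,n))$ and transporting the conclusion across the isomorphism used in the proof of \cref{thm:matroidinv}.

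The main obstacle, and the reason this statement remains a conjecture, is that the valuative decomposition above carries signs $a_i \in \ZZ$ whose net effect on positivity cannot be controlled by present techniques: a signed integer combination of polynomials that are individually positive in the $t_i - t_{i+1}$ need not itself be positive. To close this gap I would pursue two strategies in parallel. The first is to produce a sign-coherent refinement, exhibiting a distinguished expression of $\C(M)$ as a $\ZZ_{\geq 0}$-combination of classes of realizable matroids, perhaps through matroid polytope subdivisions that respect the codimension filtration by connected-component count. The second, and in my view more promising, is to extract a manifestly positive combinatorial formula for $d_\lambda(t)$ by pairing the explicit expression for $\C(M)$ recorded in \cref{prop:chow} with a Molev--Sagan-type tableau formula for $s_\lambda(u,t)$, producing an indexing set built from matroid-intrinsic data (for instance, decorations of lattice points of $P(M)$, or of permutation-indexed basis choices as in the formula of \cref{thm:anderson}) whose generating function in $t_i - t_{i+1}$ recovers each $d_\lambda(t)$ up to the prescribed sign.

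The hardest step will be showing that any such positive combinatorial model depends only on matroid-theoretic data and not on a choice of geometric realization. For genuinely non-realizable matroids such as the V\'amos matroid, the input of rational singularities is unavailable, so the argument must either be intrinsically combinatorial or replace the geometry by a matroidal Chow-ring substitute. One plausible route is to mimic Graham's proof by constructing a combinatorial ``Kempf-style collapse'' whose fiber structure simultaneously produces the sign $(-1)^{r(n-r)-(n-e)-|\lambda|}$ and positivity in the $t_i - t_{i+1}$, thereby bypassing the need for an actual $X_v$ to which the geometry of matrix orbit closures can be applied.
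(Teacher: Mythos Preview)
The statement you are addressing is a \emph{conjecture} in the paper, not a theorem; the paper does not supply a proof. What the paper does say, immediately before stating this conjecture, is that the realizable case already follows from \cref{prop:pos1} or from Graham's positivity theorem, and that the existence of \emph{some} integer expression in the differences $t_{i+1}-t_i$ follows from valuativity as in the argument for \cref{thm:valuative}. You have correctly identified both of these points and, crucially, you have correctly identified the obstruction: the signs $a_i$ in the valuative decomposition into Schubert matroids are uncontrolled, so positivity does not survive the passage from realizable to arbitrary matroids. Your summary of the state of affairs matches the paper exactly.

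Where your write-up goes beyond the paper is in sketching possible attack routes (a sign-coherent subdivision, a Molev--Sagan-type tableau model, a combinatorial analogue of a Kempf collapse). These are reasonable research directions, but none of them is carried out, and you should be explicit that what you have written is a \emph{proposal} and not a proof. In particular, your first paragraph reads as though the existence-plus-realizable-case argument already proves something; it does not prove the conjecture, only the weaker assertion that each $d_\lambda(t)$ lies in $\ZZ[t_1-t_2,\ldots,t_{n-1}-t_n]$ and that the signed positivity holds when $M$ is realizable over~$\CC$. If you intend to submit this as a proof attempt, the honest framing is that the conjecture remains open and your text is a discussion of why the available tools fall short and what new ideas might be needed.
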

We finally present the weakest but most accessible
version of our conjectures by passing from the $G$-equivariant Chow
ring of $\AA^{r\times n}$ to the $\GL_r$-equivariant Chow ring. It
follows from the above conjecture by $t_1 =\dots =t_n = 0$.
\begin{conjecture}\label{conj:chow2}
  Let $M$ be a matroid of rank $r$ on $n$ elements with $e$ connected
  components. Write $\C(M)|_{t_1 = \dots = t_n =0}$ in terms of Schur
  polynomials,
  \[
    \C(M)|_{t_1 = \dots = t_n =0} = \sum_{\lambda : \lambda_1 \leq n-r,
      |\lambda| = r(n-r)-(n-e)} d_\lambda s_\lambda(u).
  \]
  Then, the constants $d_\lambda$ are non-negative integers.
\end{conjecture}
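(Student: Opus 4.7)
The plan is to attack this in two stages. First, I would establish the structural properties of $\C(M)|_{t_1=\cdots=t_n=0}$: it is a symmetric homogeneous polynomial in $u_1,\ldots,u_r$ of degree $r(n-r)-(n-e)$, and each coefficient $d_\lambda$ is, as a function of~$M$, a valuative matroid invariant. The valuativity follows by inspection of the permutation formula for $\C(M)$---the only matroid-dependent factor is the indicator $[B=B(w)]$, which is valuative by \cite[Proposition 5.3]{df}---combined with the fact that Schur-basis projection after specializing $t=0$ is $\QQ$-linear in $\K(M)$.

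For matroids $M$ realizable by a complex matrix $v$, the plan is to identify $\C(M)|_{t=0}$ with a genuine intersection-theoretic class and apply classical positivity. By \cref{prop:chow} the polynomial $\C(M)$ represents $[X_v]$ in $(\GL_r\times T^n)$-equivariant Chow, and setting $t=0$ corresponds to forgetting the torus action. Combined with the structure of $X_v$ developed in \cref{prop:coordring} and the Gel$'$fand-MacPherson correspondence, this is essentially (up to the Chern data of $\mathcal{S}^{\oplus r}$) the $\GL_r$-equivariant class of the torus orbit closure $Y\subset Gr(r,n)$, which is an effective cycle in a homogeneous space. By Kleiman-Graham positivity for effective classes on Grassmannians, its Schubert expansion has non-negative coefficients, so $d_\lambda \geq 0$. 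This recovers the conjecture for $M$ realizable over~$\CC$.

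For a general matroid, the natural next move is to exploit the valuativity established in step one. By \cite[Theorem 5.4]{df} every valuative invariant of matroids of rank~$r$ on $[n]$ is a $\ZZ$-linear combination of evaluations at Schubert matroids, each of which is realizable over any infinite field. One would attempt to arrange this decomposition so that positivity of $d_\lambda$ for individual Schubert matroids, now known by the realizable case, survives the signed expansion. Alternatively, one would seek a direct combinatorial rule for $d_\lambda$---most optimistically, a counting formula in terms of chains of flats or ``matroidal fillings'' of shape~$\lambda$ refining the Knutson-Lam-Speyer-style cohomology formula for torus orbit closures in $Gr(r,n)$.

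The main obstacle is exactly this last step. The Schubert-matroid decomposition of a non-realizable matroid in the valuative group carries signed coefficients, and there is no a priori reason for cancellation to preserve Schur positivity; this is exactly the difficulty noted by the authors when passing from \cref{prop:pos1} to \cref{conj:chow2}. Overcoming it appears to require either a constructible model available for every matroid---for example, the wonderful compactification or the matroid Chow ring of Feichtner-Yuzvinsky and Adiprasito-Huh-Katz, which provide a cohomological shadow even in the non-realizable case---or a manifestly positive combinatorial tableau model for~$d_\lambda$ whose Schur-expansion identity with $\C(M)|_{t=0}$ can be verified directly from the permutation formula of \cref{thm:anderson}.
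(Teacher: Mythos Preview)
The statement you are addressing is a \emph{conjecture} in the paper, not a theorem: the paper offers no proof of \cref{conj:chow2}. So there is no ``paper's own proof'' to compare against. What the paper does provide is the observation that the realizable case follows from \cref{prop:pos1} (or directly from Graham's positivity \cite[Theorem~3.2]{graham}), and the explicit remark after Conjecture~8.5 that valuativity does not reduce the general case to the realizable one, because the Schubert-matroid decomposition carries uncontrolled signs.

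Your proposal tracks the paper's discussion accurately. Your treatment of the realizable case is correct in outline and matches the paper's: $\C(M)|_{t=0}$ is the $\GL_r$-equivariant Chow class of $X_v$, which via \cref{cor:sameschubertexp} corresponds to the non-equivariant class of the torus orbit closure $Y$ in $Gr(r,n)$; effectivity of $[Y]$ then gives Schur-nonnegativity. You also correctly identify valuativity of $d_\lambda$ and correctly diagnose why valuativity alone is insufficient for the non-realizable case---indeed, you explicitly name this as ``exactly the difficulty noted by the authors.''

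What you have written, then, is not a proof but a faithful summary of the state of the problem together with a research outline (wonderful compactifications, matroid Chow rings \`a la Adiprasito--Huh--Katz, or a combinatorial tableau rule). That is an honest assessment, but you should be clear that no step in your proposal closes the gap for non-realizable matroids: the suggestions in your final paragraph are directions, not arguments, and the conjecture remains open as stated in the paper.
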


\begin{example}
  If $M = U_{2,4}$ then $\K(M) = 1-u_1^2u_2^2t_1t_2t_3t_4$. It follows that $\C(M) = 2u_1 +2u_2 + t_1 +t_2 + t_3 +t_4$. We write
  \[
    \C(M) = 2s_{(1,0)}(u,t) - (t_1 + t_2 - t_3 -t_4) s_{(0,0)}(u,t).
  \]
  Since $-(t_1 + t_2 - t_3 - t_4) = (-1)((t_1 - t_3) + (t_2 - t_4))$, $\C(M)$
  satisfies \cref{conj:chow1}. Setting $t_1 = \dots = t_4 = 0$ we see
  at once that \cref{conj:chow2} is satisfied as well.
\end{example}

\begin{remark}
  The coefficients that appear in \cref{conj:chow2} 
  are related to the Schur classes of tautological
  bundles of matroids studied in \cite{best}. When $M$ is connected
  they are degrees of such Schur classes.  
  
  Let $Y$ be the torus orbit closure of a point in $Gr(r,n)$ with matroid~$M$. 
  Write $y^T(M)$ for the class of $Y$ in $A^\bullet_T(Gr(r,n))$,
  which is represented by the multidegree $\C(M)$ in the appropriate
  quotient of $A^\bullet_G(\AA^{r \times n})$. 
  Let $y(M)$ denote the image of $y^T(M)$ in
  the ordinary (non-equivariant) Chow ring of $Gr(r,n)$. 
  Assume $M$ is connected, i.e., $Y$ has dimension $n-1$. 
  
  The permutohedral toric variety $X_{A_{n}}$ is a toric resolution of singularities of~$Y$.
  For a class $\xi\in A^\bullet(Gr(r,n))$,
  the push-pull formula for this resolution says that
  \[
    \deg_{Gr(r,n)}( y(M)\,\xi ) = \deg_{X_{A_n}}( \xi_M )
  \]
  where equivariant localization yields the following description of~$\xi_M$.
  Let $\xi^{T} \in A^\bullet_{T}(Gr(r,n))$ be an equivariant lift of~$\xi$.
  Given an $r$-subset $B$ of $[n]$, let $\xi^{T}_B \in \ZZ[t_1,\dots,t_n]$
  denote the localization of $\xi^T$ at the $T$ fixed point of
  $Gr(r,n)$ indexed by $B$. 
  For a matroid $M$ of rank $r$ on $[n]$, the collection of polynomials
  $\xi^T_M = (\xi^T_{B(w)})_{w \in S_n}$ defines a class in the
  $T$-equivariant Chow ring of~$X_{A_{n}}$,
  and $\xi_M$ is the corresponding ordinary class. 
  This is the Chow-theoretic analogue of, and is implied by,
  \cite[Proposition~3.13]{best}.
  The argument in \cite[Lemma~10.9]{best}
  extends this to non-realizable matroids. 
  
  We may uniquely write
  \[
    \C(M)|_{t_1 = \dots = t_n =0} = \sum_{\lambda : \lambda_1 \leq n-r,
      |\lambda| = r(n-r)-(n-1)} d_\lambda s_\lambda(u).
  \]
  Then \cref{cor:sameschubertexp} implies that
  \[
    y(M) = \sum_{\lambda : \lambda_1 \leq n-r, |\lambda| =
      r(n-r)-(n-1)} d_\lambda [\Omega_\lambda] \in A^\bullet(Gr(r,n))
  \]
  for the same constants $d_\lambda$. For an (equivariant) vector
  bundle $\mathcal{E}$ let $s_\lambda(\mathcal{E})$ denote the Schur
  polynomial evaluated at its (equivariant) Chern roots. There is an
  equality of non-equivariant classes
  $[\Omega_{\lambda}] = s_\lambda(\mathcal{S}^\vee)$ in
  $A^\bullet(Gr(r,n))$. Let $\lambda^*$ denote the partition
  corresponding to the complement of $\lambda$ in a $r$-by-$(n-r)$
  box, so that
  $[\Omega_\lambda][\Omega_{\mu^*}] = \delta_{\lambda\mu}$. Assume
  $\mu$ is a partition of $n-1$. Pairing $y(M)$ with $[\Omega_\mu]$ and taking degree gives
  \begin{align*}
  d_{\mu^*} = \deg_{Gr(r,n)}( y(M) [\Omega_\mu]) &= \deg_{Gr(r,n)}( y(M) s_\mu(\mathcal{S}^\vee)) \\
    &= \deg_{X_{A_{n-1}}}(s_\mu(\mathcal{S}^\vee)_M)= \deg_{X_{A_{n-1}}}(s_\mu(\mathcal{S}|_M^\vee)),
  \end{align*}
  where $\mathcal{S}|_M$ is the tautological sub-bundle of the matroid
  $M$ from \cite{best}.
\end{remark}

\bibliography{ratSing}{} \bibliographystyle{alpha}
\end{document}